\newcommand{\R}{{\mathbb R}}
\newcommand{\C}{{\mathbb C}}
\newcommand{\Z}{{\mathbb Z}}
\newcommand{\N}{{\mathbb N}}
\newcommand{\lcal}{{\mathcal L}}
\newcommand{\acal}{{\mathcal A}}
\renewcommand{\phi}{\varphi}
\newcommand{\al}{\alpha}
\renewcommand{\k}{k}
\newcommand{\DB}{\Delta_{\Omega}^B}
\newcommand{\DL}{{\mathcal D}\ell(k)}
\newcommand{\SL}{{\mathcal S}\ell(k)}
\newcommand\Ha{\operatorname{Ha}^{(1)}}
\newcommand\csd{\Delta_{S^* \partial \Omega}}
\newcommand\Cbill{C_{\operatorname{billiard}}}
\newcommand\Omegabar{\overline{\Omega}}
\newcommand\pa{\partial}
\newtheorem{theo}{{\sc Theorem}}[section]
\newtheorem{cor}[theo]{{\sc Corollary}}
\newtheorem{lem}[theo]{{\sc Lemma}}
\newtheorem{prop}[theo]{{\sc Proposition}}
\newtheorem{defn}[theo]{{\sc Definition}}
\newtheorem{rem}[theo]{{\sc Remark}}
\newtheorem{maintheo}{{\sc Theorem}}
\newtheorem{maincor}{{\sc Corollary}}
\title[Inverse spectral problem for analytic $(\Z/2\,\Z)^n$-symmetric domains in $\R^n$]{Inverse spectral
 problem for analytic $(\Z/2\,\Z)^n$-symmetric domains in $\R^n$}
\author{Hamid Hezari and Steve Zelditch}
\address{Department of Mathematics, Johns Hopkins University, Baltimore, MD
21218, USA}
\email{zelditch@math.jhu.edu, hhezari@math.jhu.edu}
\thanks{Research partially supported by  NSF grant \#DMS-06-03850.}
\begin{document}

\maketitle

\begin{abstract}  We prove that bounded real analytic domains in
$\R^n$ with the symmetries of an ellipsoid, and with one axis
length fixed,  are determined by their Dirichlet or Neumann
eigenvalues among other bounded real analytic domains with the
same symmetries and axis length. Some non-degeneracy conditions
are also imposed on the class of domains. It follows that bounded,
convex analytic domains are determined by their spectra among
other such domains. This seems to be the first positive result on
the well-known Kac problem, can one hear the shape of a drum?, in
higher dimensions.

\end{abstract}

\section{Introduction and the Statement of results}

The purpose of this article is to prove that bounded analytic
domains $\Omega \subset \R^n$ with $\pm $ reflection symmetries
across all coordinate axes, and  with one axis height fixed  (and
also satisfying some generic non-degeneracy conditions) are
spectrally determined among other such domains. This inverse
result (Theorem \ref{ONESYM}) gives a higher dimensional analogue
of the main result of \cite{Z2} that ``bi-axisymmetric'' real
analytic plane domains are spectrally determined among other
bounded analytic plane domains with the symmetry of an ellipse. To
our knowledge, it is the first positive higher dimensional inverse
spectral result for Euclidean domains which is not restricted to
balls. Negative results (i.e. constructions of non-isometric
isospectral pairs) are given in \cite{U,GW,GWW} (see also
\cite{GS} for some non-Euclidean domains). Higher dimensional
inverse results for semi-classical Schr\"odinger operators with
similar symmetries have recently been proved in \cite{GU,H}.

\subsection{\label{STATEMENT} Statement of results}

We consider the eigenvalue problem on the domain $\Omega$ with the
Euclidean Laplacian $\DB$ and with boundary conditions $B$:

\begin{equation}\label{EIG}
\left\{ \begin{array}{l} \DB \phi_j(x)  = - \lambda_j^2
\phi_j(x),\;\;\;\langle \phi_i, \phi_j \rangle = \delta_{ij},\;\;
(x  \in \Omega) \\ \\
 B \phi_j (y) = 0, \;\;\; y \in
\partial \Omega.
\end{array}
\right.
\end{equation}
The  boundary conditions could be either Dirichlet $B \phi = \phi
|_{\partial \Omega}$,  or Neumann $B \phi = \partial_{\nu} \phi
|_{\partial \Omega}$ where $\partial_{\nu}$ is the interior unit
normal.

The  $(\Z/2\,\Z)^n$ symmetries of the title  are the maps
\begin{equation} \label{SIGMAJ} \sigma_j: (x_1, \dots, x_n) \to (x_1, \dots, - x_j, x_{j +
1}, \dots, x_n) \end{equation}  and we assume that they  are
isometries of $\Omega$.  The symmetry assumption implies that the
intersections of the coordinate axes with $\Omega$ are projections
of  bouncing ball orbits preserved by the symmetries.  We recall
that a bouncing ball orbit $\gamma$ is a 2-link periodic
trajectory of the billiard flow, i.e. a reversible periodic
billiard trajectory that bounces back and forth along a line
segment orthogonal to the boundary at both endpoints.  The
endpoints of the projection to $\Omega$ of the  bouncing ball
orbit are fixed points of all but one  of the isometries
$\sigma_j$; the remaining one fixes the projected orbit setwise
but interchanges the endpoints. We add the generic condition that
at least one of these bouncing ball orbits is
\textit{non-degenerate} (see (\ref{DL}) for the conditions). We
also fix the length $L_\gamma=2L$ of this bouncing ball orbit
$\gamma$.

We denote by $\mathcal D_{ L}$ to be the class of all bounded
real-analytic  domains $\Omega \subset \R^n$ satisfying these
assumptions. Thus, ${\mathcal D}_L$ is the class of domains such
that:

\begin{equation} \label{DL}
\left\{\begin{array}{llllll}
   \text{(i)}\; \; \;\sigma_j: \Omega \to \Omega \; \text{is an isometry for all}\; j=1,\dots, n; \\

   \text{(ii)} \;\text{one of the coordinate axis bouncing ball orbits, called $\gamma$, is  of length}\; 2L \;
             \;  \;\\

   \text{(iii)} \;\text{the lengths}\; 2 r L \; \text{of  all iterates}\; \gamma^r (r =
             1, 2, 3, \dots) \; \text{have multiplicity one in}\; Lsp(\Omega); \\

   \text{(iv)}\; \gamma \;\;  \text{is non-degenerate, i.e. 1 is not an eigenvalue of  its Poincar\'e map }  P_{\gamma}; \\
     \;\;\;\;\;\;\;\text{if}\; \gamma \;\text{is elliptic and}\; \{e^{\pm i\al_1},.
   ..e^{\pm i \al_{n-1}}\}\; \text{are the eigenvalues of } \; P_{\gamma}\; , \text{we} \\
              \; \; \; \;\; \;\;\text {further require that}\; \{\al_1,..., \al_{n-1}\} \;
              \text{are linearly independent over}\; \mathbb Q. \; \text{We assume the }\\ \;\;\;\;\;
              \;\;
              \text{same independence  condition in the Hyperbolic case or mixed cases.}\\
\end{array} \right.
\end{equation}

Here, $Lsp(\Omega)$ is the length spectrum of $\Omega$, i.e. the
set of lengths of closed billiard trajectories (cf. \cite{PS,Z3}).
Multiplicity one means that there exists precisely one closed
billiard trajectory of the given length up to time reversal. Let
Spec$_B(\Omega)$ denote the spectrum of the Laplacian
$\Delta_{\Omega}^B$  of the domain $\Omega$ with boundary
conditions $B$ (Dirichlet or Neumann).

\begin{maintheo} \label{ONESYM} For Dirichlet (or Neumann)  boundary
conditions $B$, the map Spec$_B: {\mathcal D}_{L} \to
\R_+^{{\bf N}}$ is 1-1. \end{maintheo}

In other words, if two bounded real analytic domains $\Omega_1,
\Omega_2 \subset \R^n$ possessing the symmetries of an ellipsoid
and satisfying the non-degeneracy and length  assumptions of
(\ref{DL}) have the same Dirichlet (resp. Neumann) spectra, then
they are isometric. To our knowledge, the  only prior positive
result on the inverse spectral problem for higher dimensional
bounded domains is that a domain with the Dirichlet (or Neumann)
spectrum of a ball must be a ball \cite{KAC}. In that case the
proof is based on the trace of the heat semi-group rather than the
wave group or resolvent kernel. The heat trace for the Dirichlet
(or Neumann) Laplacian $\Delta_{\Omega}^D$ of a bounded domain has
the singularity expansion,
$$Tr e^{ t \Delta_{\Omega}^D} \sim t^{-n/2} (C_n Vol_n(\Omega) + C_{n}'
Vol_{n-1}(\partial \Omega)  t^{-1/2} + \cdots ), \;\;\; t \to
0^+,$$ where $C_n, C_n'$ are constants depending only on the
dimension. Hence the volume and surface measure  are spectral
invariants. The ball is determined  as the unique domain where the
isometric inequality $Vol_{n-1} (\partial \Omega) \geq A_n
Vol_n(\Omega)^{\frac{n-1}{n}}$ (for a certain constant $A_n$) is
an equality.

Our proof of Theorem \ref{ONESYM} has a similar form in that we
calculate some special spectral invariants and then use the
invariants to uniquely determine the domain.   But instead of the
heat semi-group we use the wave group $e^{i t \sqrt{-
\Delta_{\Omega}^D}}$ or more precisely the semi-classical
resolvent $ R_{\Omega}^{D} (k) = -( \Delta_{\Omega}^D + k^2)^{-1}$ for
$k \in \C$, which is a semi-classical Laplace transform of the
wave group (see \S \ref{RWG}). Here,  we are assuming that the
boundary conditions are Dirichlet, but the methods and results are
valid for the Neumann Laplacian $\Delta_{\Omega}^N$ with only
minor modifications. The spectral invariants we study are the
`wave invariants' associated to one of the bouncing ball orbits
defined by the coordinate axes. The key advantage of these wave
invariants is that they are localized at the endpoints of the
projected orbit, whereas heat invariants are integrals of
curvature invariants over $\Omega$ or $\partial \Omega$. In
Theorem \ref{BGAMMAJ}, the wave invariants of bouncing ball orbits
are expressed in terms of the Taylor coefficients of the defining
function of $\Omega$ near the endpoints. Under our symmetry
assumptions, the Taylor coefficients are determined from the wave
invariants. That proves Theorem \ref{ONESYM}.

 As a corollary, we obtain a result for convex
analytic domains that does not require any length to be marked.

\begin{maincor} \label{ONESYMCOR} Let ${\mathcal C}$ be the class of analytic convex domains with
$(\Z/2\,\Z)^n$ symmetry, such that the shortest closed billiard
trajectory $\gamma_0$  is non-degenerate and satisfies the
conditions $(iii)$ and $(iv)$ of $(\ref{DL})$.  Then Spec$_B$:
${\mathcal C} \to \R_+^{{\bf N}}$ is 1-1.
\end{maincor}

This follows from Theorem \ref{ONESYM} and a result of M. Ghomi
\cite{Gh} that  the shortest closed trajectory of a
centrally-symmetric convex domain
 is automatically a bouncing ball orbit. Hence the length of this orbit is self-marked,
 and it is not necessary to mark the
  length $L_\gamma=2L$ of an invariant bouncing ball orbit $\gamma$.

  \subsection{Balian-Bloch and wave  invariants at a bouncing ball orbit}

 As mentioned above, the  proof of Theorem \ref{ONESYMCOR} is based the study of spectral invariants
  of the Dirichlet or Neumann Laplacian of $\Omega$
   known as the Balin-Bloch (or  wave trace) invariants at the closed billiard
   trajectories $\gamma$ of $\Omega$.    The Balian-Bloch
  invariants $B_{\gamma, j}$ are coefficients
  of  the regularized trace expansion
  \begin{equation} \label{PR} Tr R_{\Omega, \rho}^{D} (k ) \sim  {\mathcal D}_{D, \gamma}(k)  \sum_{j =
0}^{\infty} B_{\gamma, j}  k^{-j},\;\;\; \Re k \to
\infty,\end{equation}
 of the smoothed semi-classical resolvent $ R_{\Omega, \rho}^{D} (k)$ where $\hat{\rho}$ is localized
 at the length of the closed orbit $\gamma$. The smoothed semi-classical resolvent is defined
 in (\ref{RR}) and the  precise statement of (\ref{PR}) is given in Theorem \ref{BBL}.
   The factor ${\mathcal D}_{D, \gamma}(k)$ is a well-known
 symplectic factor that is  reviewed in \S \ref{RWG} and discussed
 in more detail in \cite{GM,PS}.

 The
 Balian-Bloch invariants are named after the physicists who introduced them in   \cite{BB1,BB2}
 and studied them on a somewhat non-rigorous  formal level.  Since then,  a long stream of
 mathematical works have been produced  on the
  dual singularity expansion of the trace $Tr \cos t \sqrt{-\Delta_{\Omega}^D}$  of the wave
 group. The classical results on wave trace invariants on compact Riemannian
 manifolds without boundary are due
to  Colin de
 Verdi\`ere, Chazarain and Duistermaat-Guillemin. The wave trace expansion
 was  then  generalized to  manifolds with
boundary by Guillemin-Melrose in \cite{GM} (see also  \cite{PS}
for a very thorough study).  The semi-classical
 resolvent and wave group are related by a Laplace transform (see (\ref{LAPLACE})) and so
 the semi-classical (i.e. large $k$)
 expansion (\ref{PR}) is
essentially the same as the singularity expansion of $Tr \cos t
\sqrt{-\Delta_{\Omega}^D}$.  Algorithms for calculating the
 coefficients in the boundaryless case were given in \cite{G,G2,Z1}.  In
 \cite{Z2,Z3} an algorithm was given for calculating the invariants in the
 boundary case but it was  only implemented for plane domains. In this
 article, the algorithm is developed for higher dimensional
 domains and explicit formulae for the Balian-Bloch or wave trace
 invariants are given in Theorem \ref{BGAMMAJ}. The calculations
 also draw on the analysis in \cite{H} of similar invariants for
 semi-classical Schr\"odinger operators. This result is of
 independent interest and is valid without any symmetry
 assumptions.

 We now  state the formulae for the Balian-Bloch invariants.  They require
 some more notation which will be further discussed in \S
 \ref{BACKGROUND} and \S
 \ref{SPSECT}. Almost the same notation is used in \cite{Z3}.
  We align the axes so that the bouncing ball orbit
$\gamma$ is  a vertical segment of length $L$  with endpoints at
$A = (0,\frac{L}{2})$ and $B = (0, -\frac{L}{2})$, where $0$
denotes the origin in the orthogonal $x'=(x^1, \dots, x^{n-1}) \in
\R^{n-1}$ plane. In a metric tube $T_{\epsilon}(\overline{AB})$ of
radius $\epsilon$ around $\gamma$, we may locally express
 $\partial \Omega = \partial \Omega^+ \cup \partial \Omega^-$ as the union of two graphs
over a ball $B_{\epsilon}(0)$ around $0$ in the $x'$-hyperplane,
namely
\begin{equation}\label{GRAPHS}  \partial \Omega^+
 = \{ x^n = f_+(x'),\;\;\; |x'| \leq \epsilon\},\;\; \partial \Omega^-
  = \{ x^n = f_-(x'),\;\;\; |x'| \leq \epsilon \}. \end{equation}

We will use the standard shorthand notations for multi-indices,
i.e. $\vec \gamma=(\gamma_1,...\gamma_{n-1})$, $|\vec
\gamma|=\gamma_1+...+\gamma_n$,  $\vec X ^{\vec
\gamma}=X_1^{\gamma_1}...X_{n-1}^{\gamma_{n-1}}$, and by
$\overrightarrow {h^{pq}_{\pm, 2r}}$ , we mean the $(n-1)$-vector
$$\overrightarrow {h^{pq}_{\pm, 2r}} = (h^{11,pq}_{\pm, 2r},h^{22,pq}_{\pm, 2r},...,h^{(n-1,n-1),pq}_{\pm,
2r}), $$ where $[h^{ij,pq}_{\pm, 2r}]_{1\leq i,j\leq n-1, 1\leq p, q
\leq 2r}$ is the inverse Hessian matrix of the length functional $
{\mathcal L}_{\pm}(x_1', \dots, x_{2r}')$ given in (\ref{LPM}).

The following generalizes Theorem 5.1 in \cite{Z3} from two to
higher dimensions.

\begin{maintheo} \label{BGAMMAJ} Let $\Omega$ be a smooth domain with a bouncing
ball orbit $\gamma$ of length $2L$ and let $B_{\gamma^r, j}$ be
the wave invariants associated to $\gamma^r$ (see cf. \ref{BB}).
Then for each $r = 1, 2, \dots$, and $j$ there exists a polynomial
$P_{r,j}$ such that:

\begin{enumerate}

\item $B_{\gamma^r, j}   = P_{r, j} (\{D^{|\gamma|}_{\vec \gamma}
f_+(0)\},\{D^{|\gamma|}_{\vec \gamma} f_-(0)\})$ with $|\gamma|
\leq 2j + 2$; i.e. the highest order of derivatives appearing in
$B_{\gamma^r, j}$ is $2j+2$.

\item   In the polynomial expansion of $B_{\gamma^r, j}$ the
Taylor coefficients of order $2j+2$ appear in the form $\{D^{2j+2}_{2\vec \gamma}
f_{\pm}(0)\}$.

\item $B_{\gamma^r,0}$ is only a function of $r$, $L$ and $n$, and
for $j\geq 1$
$$ \begin{array}{l}  B_{\gamma^r, j} =\frac{B_{\gamma^r,0}}{(2i)^{j+1}} \sum_{|\gamma|=j+1}\frac{r}{\vec \gamma !}\big\{(\overrightarrow {h^{11}_{+,
2r}})^{\vec \gamma}D^{2j+2}_{2\vec \gamma} f_{+}(0)
 -(\overrightarrow {h^{11}_{-,
2r}})^{\vec \gamma}D^{2j+2}_{2\vec \gamma} f_{-}(0)\big\}
\\ \\
 + R_{2r,j} ({\mathcal
J}^{2j+1} f_+(0), {\mathcal J}^{2j+1} f_-(0)),\end{array} $$ where
the remainder $ R_{2r,j} ({\mathcal J}^{2j+1} f_+(0), {\mathcal
J}^{2j+1} f_-(0))$ is a polynomial in the designated jet of
$f_{\pm}.$

\item In the $(\Z/2\,\Z)$- symmetric case, where $f_+=f=-f_-$, we
have the simplified formula
$$ \begin{array}{l}  B_{\gamma^r, j} = \frac{B_{\gamma^r,0}}{(2i)^{j+1}} \sum_{|\vec \gamma|=j+1}\frac{r}{\vec \gamma !}\left( \frac{1}{\sin  \frac{\vec \alpha}{2}} \cot \frac{r
\vec \alpha}{2} \right)^{\vec \gamma}D^{2j+2}_{2\vec \gamma} f(0)
  + R_{2r,j} ({\mathcal
J}^{2j+1} f(0)). \end{array}$$

\item In the $(\Z/2\,\Z)^n$- symmetric case, formula (4) holds
with remainder in $R_{2r,j} ({\mathcal J}^{2j} f(0)).$
\end{enumerate}
\end{maintheo}

In the above notation, the $(\Z/2\,\Z)^n$-symmetry assumptions in
(5)  are that

  \begin{equation} \label{SYMMETRY} f_+(x') = - f_-(x'), \;\; f_{\pm}(\sigma_j(x')) = f_{\pm}(x'),
  \end{equation}
  where $\sigma_j$ denotes the reflections in the coordinate
  hyperplanes of $\R^{n-1}$. The first assumption implies that
  there exists a function $f(x')$ so that the top of the domain is
  defined by $x_n = f(x')$ and the bottom is defined by $x_n = -
  f(x').$ The further symmetry assumptions then say that $f$ is an even function in every variable $x^i$, $1\leq i \leq n-1$, i.e.

\begin{equation} \label{f} f(x^1, \dots, x^{n-1}) = F({(x^1)}^2,
\dots, {(x^{n-1})}^2), \qquad F\in C^\omega(\mathbb R ^{n-1}).
\end{equation}

The value of these explicit formulae is demonstrated by
applications such as Theorem \ref{ONESYM}. Theorem \ref{BGAMMAJ}
may be viewed as an alternative to the use of Birkhoff normal
forms methods for calculating wave trace coefficients  as in
\cite{G,G2,Z1,Z2,ISZ,SZ}. Further discussion and comparison of
methods is given at the end of the introduction.

Our proof of Theorem \ref{BGAMMAJ} is  rather different from that
in \cite{Z3,Z4}. It is based on the construction and analysis of a
microlocal monodromy operator associated to $\gamma$,  inspired by
the works of Sj\"ostrand-Zworski \cite{SZ} and Cardoso-Popov
\cite{CP} (see also \cite{ISZ}), but employing a layer potential
analysis more closely related to that in  \cite{Z3,HZ}.
  The trace
asymptotics are  eventually reduced to those of a boundary
integral operator in Proposition \ref{NFORM} and Corollary
\ref{SOCINT}, and then to the stationary phase asymptotics of a
certain oscillatory integral in Theorem \ref{SETUPA}.  Since the
method and results give a higher dimensional generalization of the
analogous results  of \cite{Z3,Z4}, there is some overlap in the
arguments from the two-dimensional case; we have tried to minimize
the overlap, but it is necessary to give complete details on the
formulae in $n$ dimensions since they differ in numerous ways from
the two-dimensional case.

\subsection{Determining Taylor coefficients from wave invariants}
To prove Theorem \ref{ONESYM}, it is only necessary to determine
the Taylor coefficients of the defining function $f = f_+ = -f_-$
of $\Omega$ at the endpoints of a symmetric  bouncing ball orbit
$\gamma$ from the Balian-Bloch invariants given in Theorem
\ref{BGAMMAJ} for iterates of this orbit. This is done is   \S
\ref{SIMPLE}. The proof builds on the methods of \cite{Z3,H}.

In fact, our method could be extended to show that analytic
domains with fewer symmetries are spectrally determined as in
\cite{Z3,H}, but for the sake of brevity we do not prove that
here.

\subsection{Discussion and  Comparison of Methods}

We  obtain the formulae for the wave invariants by applying the
stationary phase method to the trace of a well-constructed
parametrix for the monodromy operator. A  secondary purpose of
this article is  to connect the very conceptual but somewhat
abstract monodromy method of \cite{ISZ,SZ} with the methods of
\cite{Z1,Z2,Z3}.
 The articles \cite{Z1,Z2}  implicitly used the monodromy approach
in the form given in \cite{BBa,L,LT}.   In this article, we
construct the monodromy operator explicitly in terms of layer
potentials,  using in part the methods of \cite{CP} and in part
those of \cite{Z3,HZ}.   The monodromy approach connects nicely
with the `Balian-Bloch' approach of \cite{Z3} and simplifies
remainder estimates for
 the Balian-Bloch (i.e. Neumann) expansion of the resolvent.

In calculating the trace asymptotics, we do not put the monodromy
operator into normal form, but rather apply a direct stationary
phase analysis to the parametrix.
 Terms of the stationary phase expansion correspond to
Feynman diagrams and the main idea (as in \cite{Z3}) is to isolate
the diagrams which are necessary and sufficient to determine the
Taylor coefficients of the boundary defining function at the
endpoints of $\gamma$  from the wave trace invariants of iterates
of $\gamma$. In this $(\Z/2\,\Z)^n$ symmetric case, there is a
unique such diagram and that is why the symmetries simplify the
problem. It is an interesting but difficult problem to `invert'
the spectrum when some or all of the symmetries are absent.

An  alternative to the approach of this article is to use  quantum
Birkhoff normal forms around the bouncing ball orbit as in
\cite{G,G2,Z1,Z2,ISZ,SZ}. It suffices to prove the abstract result
that the quantum normal form of the Laplacian or wave group at the
invariant bouncing ball orbit, hence that the classical Birkhoff
normal form of the Poincar\'e map, is a spectral invariant. At
that point, one could generalize the result of Y.Colin de
Verdi\`ere \cite{CV} that the classical normal form determines the
Taylor coefficients of $f$ at the endpoints of the bouncing ball
orbit when $f$ has the $(\Z/2 \Z)^n$ symmetries. We plan to carry
out the details in a follow-up to this article. The methods of
this article go further, since Theorem \ref{BGAMMAJ} determines
much more than the classical Birkhoff normal form.

It would be interesting to obtain a more direct connection between
the  the `normal forms' approach and the `parametrix approach'.
 In general terms, normal forms
for Hamiltonians and for canonical transformations   belong to the canonical Hamiltonian formulation of
quantum mechanics,  while  parametrix constructions, stationary phase methods  and Feynman diagrams
belong to the Lagrangian or path integral approach.   Normal forms are of course canonical, while
parametrices are not: there are many possible parametrices (finite dimensional approximations to path integrals),
and in the inverse problem it is essential to construct computable ones. The two approaches are   dual, and although they
contain the same information, it is formatted in different ways. In particular, the two approaches highlight  different
features of the geometry and dynamics.

At the present time, explicit calculations and spectral inversion
for boundary problems have only been carried out in the Lagrangian
approach, despite the existence of a  quantum normal form along
bouncing ball orbits \cite{Z2}. In the simpler setting of
semi-classical Schr\"odinger operators at equilibrium points, one
may compare the normal forms approach of \cite{GU,CVG} to the
Lagrangian approach of \cite{H}. In this inverse problem, one has
a one-parameter family of isospectral operators depending on a
Planck's constant $h$, whereas in the boundary problem one has
only one operator and spectrum to work with. The Lagrangian
calculations in \cite{H} reproduced the inverse results of
\cite{GU,CVG}, and gave  stronger ones where some of the
symmetries were removed. It directly gives formulae for wave
invariants, which are linear combinations of normal
 form invariants.

 It is interesting  to observe that formula in Theorem
\ref{BGAMMAJ} is very similar to the formula in \cite{H} (Theorem
2.1)  for the wave invariants at an equilibrium point for a
Schr\"odinger operator on $\R^n$ with a unique equilibrium point
at $(x,\xi)=(0,0)$. This perhaps indicates a similarity between
the quantum normal form of the Schr\"odinger operator at the
equilibrium point and that of the Laplacian at a bouncing ball
orbit. The formula is also similar to a trace asymptotics formula
of  T. Christiansen for an inverse problem for wave-guides
\cite{Chr}, but that is less surprising.

The methods of this paper have further
 applications. In a work in progress \cite{HeZ}, we use the wave
 invariants to prove a certain spectral rigidity result for
 analytic deformations of an ellipse.

 Finally, we would like to thank the referees for their
 suggestions on improving the exposition.

\section{\label{BACKGROUND} Background}

In this section, we go over the basic set-up of the problem. It is
very similar to that of \cite{Z3} but requires some higher
dimensional generalizations. We  use the same notation as in
\cite{Z3} and refer there for many details.

\subsection{\label{BILL} Billiard map}

The billiard map $\beta$ is defined on
$$B^*\partial \Omega=\{(y,\eta); \; y\in
\partial \Omega, \; \eta\in T^*\partial\Omega, \;|\eta| \leq 1\}$$ as
follows: given $(y, \eta) \in B^*\partial \Omega$, with $|\eta|
\leq 1$, let  $(y, \zeta) \in S^* \Omega$ be the unique
inward-pointing unit covector at $y$ which projects to  $(y,
\eta)$ under the map $T^*_{\pa \Omega} \Omegabar \to T^* \partial
\Omega$. Then  follow the geodesic (straight line) determined by
$(y, \zeta)$ to the first place it intersects the boundary again;
let $y' \in \partial \Omega$ denote this first intersection. (If
$|\eta| = 1$, then we let $y' = y$.) Denoting the inward unit
normal vector at $y'$ by $\nu_{y'}$, we let $\zeta' = \zeta + 2
(\zeta \cdot \nu_{y'}) \nu_{y'}$ be the direction of the geodesic
after elastic reflection at $y'$, and let $\eta'$ be the
projection of $\zeta'$ to $B^*_{y'}\partial \Omega$. Then we define
$$
\beta(y, \eta) = (y', \eta').
$$
The billiard map is a symplectic, hence measure preserving, map
with respect to the standard symplectic form on $T^*\partial
\Omega$. We denote its  graph of $\beta$ by
\begin{equation} \Cbill: =
\operatorname{graph} \beta \equiv \{ (\beta(z), z) \mid
z \in B^*
\partial \Omega \}. \label{billiardCR}\end{equation}
In the case of a convex domain,
\begin{equation} \label{DEFGAMMA} \Cbill = \Gamma_{d}: =  \{\big(y, -\nabla_{y} d(y, y'), y',  \nabla_{y'} d(y,y')
\big)\}, \end{equation} i.e. the Euclidean distance function $d(y,
y')$ is a generating function for $\beta$. For non-convex domains,
this graph is larger due to `ghost' billiard trajectories which
exit and re-enter $\Omega$ but satisfy the reflection law of equal
angles at each intersection point. Such ghost orbits are the price
one pays for using a parametrix and distance function   $d(y,y')$
defined on the ambient space $\R^n$.

\subsection{Length functional} We define the length functional on $(\partial
\Omega)^m$ (the Cartesian product), by

\begin{equation}\label{LENGTH}    L(y_1, \dots, y_m) =  |y_1 - y_2| + \dots + |y_{m-1} -y_m|+|y_m-y_1| .\end{equation}
Then  $L$ is a smooth away
 from the  `large diagonals' $\Delta_{p, p + 1}:= \{y_p = y_{p +
 1}\}$.
  The condition that $d L
= 0$ is the classical condition that each $2$-link defined by the
triplet $(y_{ p- 1}, y_p, y_{p + 1})$ makes equal angles with the
normal at $y_p$. Hence a smooth critical point corresponds to a
 closed $m$-link billiard trajectory. See for instance \S 2.1 of \cite{PS}.

\subsubsection{Length functional in graph coordinates near the iterates of a bouncing ball orbit}

A bouncing ball orbit $\gamma$ is a 2-link periodic trajectory of
the billiard flow, i.e. a reversible periodic billiard trajectory
that bounces back and forth along a line segment orthogonal to the
boundary at both endpoints. As in the Introduction we orient
$\Omega$ so that $\overline{AB}$ lies along the vertical $x_n$
axis, with $A = (0,\frac{L}{2}), B = (0, -\frac{L}{2}).$  We
parameterize the boundary locally as two graphs $x_n = f_{\pm}
(x')$ over
 the $x'$-hyperplane.
Thus, in  a small tube $T_{\epsilon}(\gamma)$ around
$\overline{AB}$, the boundary consists of two components, which
are  graphs of the form $ y = f_+(x')$ near $A$ and $y = f_-(x') $
near $B$.

We then define the  length functionals in Cartesian coordinates
for the two possible orientations of the  $r$th iterate of a
bouncing ball orbit by
\begin{equation} \label{LPM} {\mathcal L}_{\pm } (x_1', \dots, x_{2r}')
 = \sum_{p = 1}^{2r} \sqrt{(x_{p + 1}' - x_p')^2 +
 (f_{w_{\pm}(p + 1) }(x_{p +1}') - f_{w_{\pm}(p)}(x_p'))^2}. \end{equation}
 Here, $w_{\pm}: \Z_{2r} \to \{\pm\}$, where $w_{+}(p)$  (resp. $w_-(p)$) alternates sign starting with
 $w_+(1) = +$ (resp. $w_-(1) = -$).
Obviously the point $(x_1', \dots, x_{2r}')=(0,\dots,0)$,
corresponding to the $r$-the iteration of the bouncing ball orbit,
is a critical point of $\lcal_{\pm}$.

\subsection{\label{RWG} Resolvent and Wave group}

By the Dirichlet Laplacian $\Delta_{\Omega}^D$ we mean the
Laplacian $\Delta = \sum_{j = 1}^n \frac{\partial^2}{\partial
x_i^2}$ with domain $\{u \in H^1_0(\Omega): \Delta u \in L^2\}$;
thus, in our notation, $\Delta_{\Omega}$ is a negative operator.
The resolvent $R_{\Omega}^D(k)$  of the Laplacian
$\Delta_{\Omega}^D$ on $\Omega$ with Dirichlet boundary conditions
is the family of operators on $L^2(\Omega)$ defined for $k \in \C,
\Im k > 0$  by
$$R_{\Omega}^D(k) = - (\Delta_{\Omega}^D + k^2)^{-1}, \;\;\;\;\; \Im k > 0.$$ The resolvent kernel, which
we refer to as the   {\it Dirichlet Green's function}
$G_{\Omega}^D(k, x, y)$ of $\Omega \subset \R^n$, is by definition
the solution of the boundary problem:
\begin{equation}\label{GREEN}  \left\{ \begin{array}{l} (\Delta_x + k^2) G_{\Omega}^D(k, x, y) = - \delta(x - y),\;\;\;
(x, y \in \Omega) \\ \\
G_{\Omega}^D(k, x, y) = 0, \;\;\; x \in \partial \Omega.
\end{array} \right. \end{equation}
The discussion is similar for Neumann boundary conditions except
that its domain is $\{u \in H^2(\Omega): \partial_{\nu} u
|_{\partial \Omega} = 0\}$, and  $\partial_{\nu} G_{\Omega}^N(k,
x, y) = 0$ for $x \in
\partial \Omega$, where $\partial_{\nu}$ is the interior unit
normal. As in the introduction, we superscript functions of the
Laplacian with  $B$ to denote either boundary condition.

The resolvent with either boundary condition may be expressed  in
terms of the even wave operator
$E_\Omega^B(t)=\cos(t\sqrt{-\Delta_\Omega^B})$ as
\begin{equation} \label{LAPLACE} R_{\Omega}^B (k ) =  \frac{i}{k }
\int_0^{\infty} e^{i k t} E_\Omega^B(t) dt,\;\;\;\;\; (\Im k  > 0
)
\end{equation}
In this paper we will consider the resolvent $R_{\Omega}^B (k )$
along the logarithmic ray $k = \lambda + i \tau \log \lambda$,
where $\lambda > 1$ and $\tau \in \R^+$.

Given  $\hat{\rho}  \in C_0^{\infty}(\R^+)$, we define the
smoothed resolvent $R_{\Omega,\rho}^B(k)$ by

\begin{equation}\label{RR} R_{\Omega,\rho}^B(k):=\rho * (\mu R_{\Omega}^B (\mu))=
\int_{\mathbb R} \rho(k - \mu)(\mu R_{\Omega}^B(\mu)) \, d\mu.
\end{equation} We note that $\rho(k - \mu)$ is well-defined since
$\rho$ is an entire function. Let us discuss in what sense the
integral in (\ref{RR}) is defined. We notice that since $\mu \in
\mathbb R$ , we have defined the resolvent $R_{\Omega}^B (\mu)$ by
$R_{\Omega}^B (\mu+i0^+)$. Hence we can write
$$\begin{array}{lll}   R_{\Omega, \rho}^{B} (k) & =  & \lim_{\epsilon \to 0^+}\int_{\mathbb R} \rho(k - \mu)\mu R_{\Omega}^B(\mu+i\epsilon) \,
d\mu
\\& & \\
&=& -\lim_{\epsilon \to 0^+}\int_{k-\mathbb R} \rho(\mu')(k-\mu')
R_{\Omega}^B(k+i\epsilon-\mu')\ d\mu'\\& &\\ &=& \int_{\mathbb R}
\rho(\mu)(k-\mu) R_{\Omega}^B(k-\mu)\ d\mu,
\end{array}$$
where the last equality is obtained by taking an appropriate
contour. Thus we can also take this last integral as our
definition of the smoothed resolvent.

Now by (\ref{LAPLACE}) we can rewrite $R_{\Omega, \rho}^{B} (k)$
in terms of the wave operator as:
\begin{equation}\label{RTAU} \begin{array}{lll}   R_{\Omega, \rho}^{B} (k) & =  &
i \int_0^{\infty}\int_{\R} \rho(\mu)  e^{i (k-\mu) t} E_{\Omega}^B (t) dt d\mu \\& & \\
&=& i \int_0^{\infty} \hat{\rho} (t)   e^{i k t} E_{\Omega}^B (t) dt\\
&& \\
& = & \frac{i}{2} \big ( \rho(k +  \sqrt{-\Delta_{\Omega}^B}) + \rho(k -
\sqrt{-\Delta_{\Omega}^B}) \big ).
\end{array} \end{equation}

The Poisson formula for manifolds with boundary \cite{AM,GM,PS}
gives the existence of a singularity expansion for the trace of
$E_\Omega^B(t)$ near a transversal reflecting ray. If we
substitute this singularity expansion into the trace of
$(\ref{RTAU})$ we obtain an asymptotic expansion in inverse powers
of $k$ for the smoothed resolvent trace:

\begin{theo} \cite{AM,GM,PS} \label{BBL} Assume that $\gamma$ is a non-degenerate periodic reflecting ray,
 and let $\hat{\rho } \in C_0^{\infty}(L_{\gamma} - \epsilon, L_{\gamma} + \epsilon)$,
 such that $\hat{\rho} \equiv 1$ on $(L_{\gamma} - \epsilon/2, L_{\gamma} + \epsilon/2)$ and
with no other lengths in its support.  Then for
$k=\lambda+i\tau\log \lambda$ with $\tau \in \mathbb R^+$, $Tr
R_{\Omega, \rho}^{D} (k)$ admits a complete asymptotic expansion
of the form (\ref{PR}) where \begin{itemize}

\item ${\mathcal D}_{D, \gamma}(k)$ is the {\it symplectic
pre-factor} $${\mathcal D}_{D, \gamma}(k)  = C_0 \;
\epsilon_D(\gamma) \frac{e^{ i k  L_{\gamma}} e^{i \frac{\pi}{4}
m_{\gamma}}}{\sqrt{|\det (I - P_{\gamma})|}};$$

\item $P_{\gamma}$ is the Poincar\'e map associated to $\gamma$;

\item $\epsilon_D(\gamma)$ is the signed  number of intersections
of $\gamma$ with  $\partial \Omega$;

\item  $m_{\gamma}$ is the Maslov  index of $\gamma$;

\item $C_0$ is a universal constant (e.g. factors of $2 \pi$).
\end{itemize}
\end{theo}
\begin{defn}\label{BB} The coefficients $B_{\gamma, j}$ are called the wave
trace invariants (or Balian-Bloch  invariants) associated to the
periodic orbit $\gamma$.

\end{defn}

As emphasized above, the discussion in the case of the smoothed
Neumann resolvent $R_{\Omega, \rho}^{N} (k)$ is essentially the
same.

\section{\label{LP}  Resolvent and the layer potentials}

The method of layer potentials (\cite{T} II, \S 7. 11) solves
(\ref{GREEN}) in terms of the `layer potentials' $G_0(k, x, y),
\partial_{\nu_y} G_0(k, x, y) \in {\mathcal D}'(\Omega
\times \partial \Omega)$,  where $\nu$ is the interior unit normal
to $\Omega$,  and $\partial_{\nu} = \nu \cdot \nabla,$ and where
$G_0(k, x, y)$ is the `free' Green's function of $\R^n$, i.e. of
the kernel of the free resolvent $- (\Delta_0 + k^2)^{-1}$ of the
Laplacian $\Delta_0$ on $\R^n$. A key point, first recognized in
\cite{BB1,BB2} and  put on a rigorous mathematical basis in
\cite{Z3,HZ}, is that the layer potentials are semi-classical
(i.e. non-homogeneous) Lagrangian distributions in the $k$
parameter, with additional homogeneous singularities on the
diagonal. In effect we wish to make use of the first property and
suppress the second. This will be explained in \S \ref{Structure}.

First let us briefly recall the method of layer potentials.
 The free outgoing Green's function in dimension $n$ is given by:
\begin{equation}
G_0(k, x,y) =  \frac{i}{4} k^{n-2} (2 \pi k  |x-y|)^{-(n-2)/2}
\Ha_{n/2-1}(k | x-y |). \label{Hankel}\end{equation}   In general,
the Hankel function of index $\nu$ has the integral representation
(\cite{T}, Chapter 3, \S 6)
\begin{equation} \label{HANKEL} \begin{array}{lll} \Ha_{\,\nu} (z) & =&  (\frac{2}{\pi z})^{1/2} \frac{e^{i(z - \pi \nu/2 -  \pi/4)}}{ \Gamma(\nu + 1/2)}
\int_0^{\infty} e^{-s} s^{-1/2} (1 - \frac{s}{2 i z})^{\nu -1/2}
ds.
\end{array} \end{equation}

The single and double \textit{layer potentials}, as operators from
the boundary $\partial \Omega$ to the interior $\Omega$, are then
respectively defined by
\begin{equation} \left\{ \begin{array}{l} \SL f(x) =
\int_{\partial \Omega}  G_0(k, x,y) f(y) dS(y), \qquad \quad  x\in\Omega, \\ \\
\DL f(x) = \int_{\partial \Omega} 2\,\partial_{\nu_y} G_0(k, x, y)
f(y) dS(y), \quad x\in\Omega.
\end{array} \right.
\end{equation}
Similarly, for a function $f$ on $\partial \Omega$, the
\textit{boundary layer potentials} $S(k)$ and $N(k)$, as operators
from the boundary $\partial \Omega$ to itself are denoted by:
\begin{equation} \label{BLayer}  \left\{\begin{array}{ll} S(k) f(x) = \int_{\partial \Omega} G_0(k, x, y) f(y) dS(y),
\qquad \; \; \, \, \quad x\in\ \partial \Omega, \\
\\
N(k) f(x) = \,\int_{\partial \Omega}2\, \partial_{\nu_y} G_0 (k,
x, y) f(y) dS(y), \; \, \quad x\in\ \partial \Omega.
\end{array}\right.
\end{equation}

Given any function $g$ on $\mathbb R^n \backslash \partial \Omega$
and any $x\in \partial \Omega$, we denote by $g_+(x)$ (resp.
$g_-(x)$) the limits of $g(w)$ as $w \to x\in
\partial \Omega$ from $w \in \Omega$ (resp. $w \in \mathbb R^n \backslash
\bar{\Omega}$). The layer potentials and the boundary layer
potentials introduced above are related by the following

\begin{equation} \label{JUMPS} \begin{array}{l} (\SL f)_+(x) = (\SL f)_-(x) = S(k)
f(x),\\
\\
(\DL f)_{\pm}(x) = (\pm \,I + N(k)) f(x),\qquad \text{(The jump
formula)}.
\end{array}
\end{equation}
Using the above notation we also have the following interesting
formula of Fredholm-Neumann
\begin{equation}\label{FN}R_{\Omega}^D(\k) -
R_0(\k) = -\DL (I + N(k))^{-1} \SL^t,
\end{equation} where $R_0(k)$ is the
free resolvent. This is true because the
kernels of both sides of the equation are solutions
to the Helmholtz equation and also the restrictions to the boundary of these kernels
are the same by the jump formula (\ref{JUMPS}). The formula follows by the uniqueness of the solutions of the Helmholtz equation.

\subsection{Structure of the operator $N(k)$
}\label{Structure}

We now state the precise sense in which $N(k)$ is a semi-classical
Fourier integral operator quantizing the billiard map of $\partial
\Omega$ when $k=\lambda+i\tau\log \lambda $ and $\lambda=\Re k \to
\infty$. It additionally has homogeneous singularities on the
diagonal. The discussion here is adapted from \cite{HZ}. Note that
here our semiclassical parameter is $\frac{1}{k}$ which is a
complex parameter.

We denote $S^*\partial \Omega=\{ (y, \eta) \in T^*\partial \Omega;
\; |\eta| = 1 \}$ and define the diagonal set
\begin{equation}
\csd = \{ (z, z); \; z\in S^*\partial \Omega \} \subset T^*\partial \Omega \times T^* \partial
\Omega. \label{csd}\end{equation}

\begin{prop} \cite{HZ} \label{HZ} Assume that $\Omega$ is a smooth  domain.
Let $U$ be any neighborhood of $\csd$. Then there is a
decomposition $N(k)$ as
$$N(k) = N_{0}(k)  + N_{1}(k)+N_{2}(k),$$
where  $N_0(k)$ is a pseudodifferential operator of order $-1$ ,
$N_1(k)$ is a semi-classical Fourier integral operator of order
zero associated with the canonical relation $\Gamma_d$ (cf.
(\ref{DEFGAMMA})), and $N_2(k)$ has operator wavefront set
contained in $U$.
\end{prop}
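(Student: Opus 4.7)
The plan is to isolate the diagonal singularity of $2\pa_{\nu_y} G_0(k,x,y)$ by a cutoff, treat the resulting near-diagonal piece by the classical pseudodifferential calculus for double-layer potentials, and analyze the oscillatory off-diagonal piece via the semi-classical FIO calculus associated to the generating function $|x-y|$. By (\ref{Hankel}) and (\ref{HANKEL}), for $x \neq y$ one has $G_0(k, x, y) = e^{ik|x-y|}\, a(k, |x-y|)$ with $a$ a classical symbol of order $-(n-1)/2$ in the product $k|x-y|$; applying $\pa_{\nu_y}$ pulls out the leading factor $ik\,\nu_y\cdot(x-y)/|x-y|$ together with lower-order corrections. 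I would choose a smooth cutoff $\chi_\epsilon(x,y)$ equal to $1$ near the diagonal of $\pa\Omega\times\pa\Omega$ and a semi-classical microlocal partition of unity $\psi_1 + \psi_2 = 1$ on $T^*\pa\Omega$, with $\psi_1 \in C^\infty_c(\{|\eta|\leq 1 - \delta/2\})$ and $\psi_2$ supported in $\{|\eta|\geq 1-\delta\}$, for a small $\delta>0$ depending on the given neighborhood $U$.

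Set $N_0(k) := \chi_\epsilon N(k)$. The standard theory of double-layer potentials on smooth boundaries (\cite{T} II \S 7.11) then identifies $N_0(k)$ as a classical pseudodifferential operator of order $-1$: the geometric cancellation $\nu_y\cdot(x-y) = O(|x-y|^2)$ on a smooth boundary absorbs one power of the diagonal singularity of $G_0$, and the $k$-dependent amplitude combined with a dyadic decomposition in $|x-y|$ yields a classical symbol of order $-1$ on $\pa\Omega$. Next set $N_1(k) := \Op(\psi_1)\,(1-\chi_\epsilon)\,N(k)\,\Op(\psi_1)$. Off the diagonal, the phase $|x-y|$ restricted to $\pa\Omega\times\pa\Omega$ is a non-degenerate generating function for the billiard relation $\Gamma_d$ of (\ref{DEFGAMMA}) at every point with $|\eta|, |\eta'| < 1$, since the critical-point condition is the equal-angle reflection law. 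Standard semi-classical FIO calculus applied to the oscillatory kernel $e^{ik|x-y|}\,\tilde a(k,x,y)$ then identifies $N_1(k)$ as a semi-classical FIO of order zero with canonical relation $\Gamma_d$.

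It remains to check that $N_2(k) := N(k) - N_0(k) - N_1(k)$ has operator wavefront set inside the prescribed neighborhood $U$ of $\csd$. The only microlocal content of $N_2(k)$ comes from cross terms involving $\Op(\psi_2)$ on at least one side of $(1-\chi_\epsilon)N(k)$ and from commutator errors between the cutoffs and the operator. Since the principal microlocal content of $(1-\chi_\epsilon)N(k)$ is carried on $\Gamma_d$, and since the portion of $\Gamma_d$ with $\max(|\eta|, |\eta'|) \geq 1-\delta$ accumulates on $\csd$ as $\delta \to 0$ by continuity of the billiard map $\beta$ at glancing, one can shrink $\delta$ and $\epsilon$ so that all of this lies inside $U$.

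The main obstacle is to make the semi-classical FIO structure precise along the complex ray $k = \lambda + i\tau\log\lambda$. Since $\Im k = \tau\log\lambda$ grows only logarithmically, the weight $\lambda^{-\tau|x-y|}$ produced by $e^{ik|x-y|}$ is absorbed in the amplitude and only modifies symbol estimates by a logarithmic factor, but this correction must be tracked consistently through every step of the calculus, including the composition formulas and the dyadic decomposition used to extract $N_0$. The second delicate point is the glancing limit, where $\Gamma_d$ degenerates onto the diagonal at $|\eta|=1$; verifying that this degeneracy forces the glancing contribution of $N(k)$ into any prescribed $U$ by shrinking $\delta$, uniformly in the complex parameter $k$, is what makes the three-piece decomposition work.
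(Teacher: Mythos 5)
Your decomposition does not deliver the stated conclusion, because the piece $N_0(k):=\chi_\epsilon N(k)$, cut off at a \emph{fixed} ($k$-independent) distance $\epsilon$ from the diagonal, is not a semi-classical pseudodifferential operator. The classical fact you invoke --- that the cancellation $\nu_y\cdot(x-y)=O(|x-y|^2)$ makes the double layer potential smoothing of order one --- controls the homogeneous singularity at the diagonal for each fixed $k$, and says nothing about the $k\to\infty$ microlocal structure, which is what ``order $-1$'' means here: it is what allows the paper to form the parametrix $(I+N_0(k))^{-1}=I+N_{-1}(k)$ and to run the wavefront-set compositions in Propositions \ref{NON2} and \ref{NPsi}. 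On the annulus $|k|^{-1}\ll |x-y|\le \epsilon$ the kernel of $N(k)$ is genuinely oscillatory, of the form $e^{ik|x-y|}a(k,x,y)$ with $a$ elliptic there; this portion is a semi-classical FIO whose operator wavefront set lies on the part of $\Gamma_d$ with $x\ne y$, hence off the diagonal of $T^*\pa\Omega\times T^*\pa\Omega$, so $\chi_\epsilon N(k)$ cannot be a $k$-pseudodifferential operator. The construction the paper imports from \cite{HZ} avoids exactly this by cutting at the $k$-dependent scale $|x-y|\sim |k|^{-3/4}$ (any exponent strictly between $1/2$ and $1$ works): only the piece supported in $|x-y|\lesssim |k|^{-3/4}$ is the pseudodifferential $N_0$, the piece $|x-y|\ge t_0$ is the FIO $N_1$, and the intermediate annulus $|k|^{-3/4}\lesssim |x-y|\lesssim t_0$ is precisely the $N_2$ microsupported near $\csd$.

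A second, smaller problem is your argument that $WF'(N_2)\subset U$. You route the glancing contribution into $N_2$ through microlocal cutoffs $\Op(\psi_2)$ supported in $|\eta|\ge 1-\delta$ and claim that the corresponding portion of $\Gamma_d$ accumulates on $\csd$ as $\delta\to 0$ ``by continuity of the billiard map at glancing.'' For non-convex domains --- and the proposition is stated for general smooth domains --- this fails: $\Gamma_d$ is generated by the ambient distance function and contains long, nearly tangent (including ghost) chords, so a pair in $\Gamma_d$ with one covector near the glancing set need not have nearby footpoints and need not lie near $\csd$. The paper's purely spatial cutoff $|x-y|\le 2t_0$ forces the footpoints to be close and, as a consequence, both covectors to be near glancing --- not the other way around --- which is why it is the length scale $t_0$ (not a frequency parameter $\delta$) that one shrinks to place $WF'(N_2)$ inside a prescribed $U$.
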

This proposition is valid whether or not $\Omega$ is convex, but
 in the convex case  $\Gamma_d = \Cbill$ is the graph of the billiard
map (cf \S \ref{BILL}).

From the integral formula (\ref{HANKEL}), the Hankel function
$\Ha_{n/2-1}(z)$ is conormal at $z=0$ and as $z \to \infty$,
\begin{equation} \label{AMP} a(z): = e^{-iz} \Ha_{n/2-1}(z) \sim
z^{-1/2} \sum_{j = 0}^{\infty} a_j z^{-j}.
\end{equation}
The kernel of the single-layer potential then has the form
$$S(k, x, y) =G_0(k, x,y)= C k^{n-2} (k |x-y|)^{-(n-2)/2}
\Ha_{n/2-1}(k |x-y|).
$$
For the double layer potential it follows from the identity
$$\frac{d}{dz}\Ha_{\nu}(z)=\frac{\nu}{z}\Ha_{\nu}(z)-\Ha_{\nu+1}(z),$$that
$$N(k, x, y) = 2\,\partial_{\nu_y}G_0(k, x, y)=C k^{n-1} (k |x-y|)^{-(n-2)/2}
\Ha_{n/2}(k |x-y|) \; \langle \frac{x - y}{|x - y|}, \nu_{y}
\rangle.
$$
We now introduce cutoff functions as in \cite{HZ}. When $k =
\lambda + i \tau \log \lambda,$ we put
$$1 = \phi_1(|x-y|) + \phi_0(|k|^{3/4}|x-y|) + \phi_2(|x-y|, |k|)$$
where $\phi_1(t)$ is supported in $t \geq t_0$ for some
sufficiently small $t_0
> 0$ (see \cite{HZ} for the choice of this $t_0$), and $\phi_0(t)$
is equal to $1$ for $t \leq 1$ and equal to $0$ for $t \geq 2$.
(The power $3/4$ in $\phi_0$ could be replaced by any other power
strictly between $1/2$ and $1$). We define the operator $N_1(k)$
to be the one with the kernel $\phi_1(|x-y|) N(k,x,y)$ which has
the form
\begin{equation} \label{a1}
N_1(k,x,y)=C k^{(n-1)} e^{ik |x-y|} \phi_1(|x-y|) a_1 (k |x-y|) \;
\langle \frac{x - y}{|x - y|}, \nu_{y} \rangle,
\end{equation}
where $a_1(z)=z^{-\frac{n-2}{2}}a(z)$ has an expansion in inverse
powers of $z$ as $z \to \infty$ (see (\ref{AMP})), with leading
term $z^{-(n-1)/2}$. The operator $N_1(k)$ is manifestly a
semiclassical FIO of order $0$ with the phase function
$d(x,y)=|x-y|$, and thus associated with the canonical relation
$\Gamma_d$ (cf. (\ref{DEFGAMMA})). The operators $N_0(k)$ and
$N_2(k)$ are constructed similarly from the cut offs $\phi_0$ and
$\phi_2$ respectively. See \cite{HZ} for the details.


\section{Monodromy operators and boundary integral operators}

It simplifies the resolvent trace calculation considerably to
reduce it to a boundary trace. In \cite{Z4,Z3} this was done by
taking the direct sum $ R_{\Omega,\rho}^D(\k) \oplus
R_{\Omega^c,\rho}^N(k) $ of the interior Dirichlet and exterior
Neumann resolvents, and verifying that  $$Tr\big(
R_{\Omega,\rho}^D(\k) \oplus R_{\Omega^c,\rho}^N(k)  -
R_{0,\rho}(\k)\big) = \rho *
\frac{d}{d\k} \log \det (I + N(\k)).$$ The construction for
the interior Neumann resolvent and exterior Dirichlet resolvent is
essentially the same and is omitted for brevity.  This relation
was stated in the physics literature, and we refer to \cite{Z4}
for references. In this article, we take a related but somewhat
different method to reduce the trace to the boundary using
monodromy operator ideas similar to those of Cardoso-Popov
\cite{CP} and of Sj\"ostrand-Zworski \cite{SZ}. A novel feature is
that we relate the monodromy operators to the boundary integral
operators $N_1(k)$ and $N_0(k)$.

Before going into the details, let us note some motivating ideas.
First, both the monodromy operator $M(k)$ and the boundary
integral operator $N_1(k)$ are quantizations of the billiard map
in the sense of being semi-classical Fourier integral operators
whose canonical relation is $\Cbill$. This suggests that they must
be closely related. However, $M(k)$ is microlocally constructed
while $N_1(k)$ is global. Further, $N_1(k)$ is just a piece of
$N(k)$, which  has the more complicated structure described in \S
\ref{Structure}, and in the boundary reduction $N(k)$ is the
primary object. On the other hand, there is a simple exact formula
for $N(k)$ while $M(k)$ is only known through microlocal
conjugation to normal form. Hence our purpose here is to construct
a monodromy operator resembling  $M(k)$ using $N(k)$ and the layer
potentials. In doing so, we follow the approach to monodromy
operators  of \cite{CP}. Since the resulting monodromy operator
does not seem to arise from  the abstract set-up  of Grushin
reductions, the proof that the trace reduces to the boundary is
not the same as in \cite{SZ}.

\subsection{Definition of the monodromy operator}
In this section we will use the basic terminologies in
semi-classical analysis such as semi-classical pseudodifferential
operators, Fourier integral operators and semi-classical wave
front sets. We refer to \cite{GS,EZ,SZ,Al} for all the definitions
and properties. We just mention that in the following sections
when we write $T(k)\sim S(k)$, for two operators $T(k)$ and
$S(k)$, we mean $T(k)-S(k)$ is a negligible (residual) operator in
the sense that its kernel is of order $O(k^{-\infty})$ in all
$C^s$ norms.
\\

Let $\gamma$ be an $m$-link periodic reflecting ray, with vertices
at $v_j$ in $\partial \Omega$, where there are $m + 1$ vertices
and $v_{m + 1} = v_1$. Let $\eta_j$ be the projection to
$B_{v_j}^*\partial\Omega$ of the direction of the ray $\gamma$
where it hits the boundary at $v_j$. We denote
$$\partial \gamma:=\{(v_1,\eta_1),\dots (v_m,\eta_m)\}$$
Let $\Gamma_j$ be microlocal
neighborhoods of $(v_j,\eta_j)$ in $B^*\partial \Omega=\{(y,\eta)|
y\in
\partial \Omega, \; \eta\in T^*\partial\Omega, |\eta| \leq1\}$.
We then define a microlocalization to $\Gamma_j$ of the double
layer potential operator by
$$H_j(k):= \,\DL \Psi_{\Gamma_j} (k): \; C(\partial
\Omega) \to C(\Omega), \;\;\;
$$ where $\Psi_{\Gamma_j}(k)$ is a
$k$-pseudodifferential operator microsupported in $\Gamma_j$. In
fact we will choose, $$\Psi_{\Gamma_j}(k) = (I + N_0(k))^{-1}
\chi_{\Gamma_j}(k), $$ where $\chi_{\Gamma_j}$ is a microlocal cut
off supported in $ \Gamma_j$. More precisely
$\chi_{\Gamma_j}(k)=Op_{\frac{1}{k}}(a_{j}(y,\eta))$, where
supp($a_j(y,\eta)) \subset \Gamma_j$ and $a_j(y,\eta)=1$ in a
neighborhood of $(v_j,\eta_j)$. Here $(I + N_0(k))^{-1}$ is a
parametrix for $I+N_0(k)$. Notice a parametrix exists because
$N_0(k)$ is a pseudodifferential of order $-1$.

Clearly, by the jump formula $(\ref{JUMPS})$, for any $u \in
C(\partial \Omega),$
$$\left\{ \begin{array}{l} (-\Delta - k^2) H_j(\k) u
=  0, \\ \\
H_j(k)(u)|_{\partial \Omega} = (I + N(\k)) \Psi_{\Gamma_j}(k) u.
\end{array} \right.
$$  Next we put for every $1 \leq i \leq m$,
\begin{equation} P_i(\k) = H_i(\k) - H_{i+1}(\k) r_{\partial \Omega}
H_i(\k) + \cdots + (-1)^{m-1} H_{i+(m-1)}(\k) r_{\partial \Omega}
H_{i+(m-2)}(k) \cdots r_{\partial \Omega} H_i(k), \end{equation}
where $r_{\partial \Omega}: C(\Omega) \rightarrow C(\partial
\Omega)$ is the operator of restriction to the boundary. Also
notice by our notation the indices $j+m$ and $j$ are identified.
Then we define

\begin{equation}
P(\k)=\frac{1}{m} \sum _{i=1}^m P_i(\k).
\end{equation}

\begin{prop} We have: $(-\Delta - \k^2) P (\k) u
= 0$. \end{prop}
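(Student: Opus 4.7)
The plan is to observe that $P(k)u$ is, by construction, a linear combination of functions obtained by applying $H_j(k)$ (for various $j$) to some function on $\partial\Omega$, and that $H_j(k)g$ always satisfies the homogeneous Helmholtz equation in $\Omega$. The identity will then follow term by term.

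First I would unpack the definitions. Recall $H_j(k) = \DL \Psi_{\Gamma_j}(k)$, where $\Psi_{\Gamma_j}(k)$ is a $k$-pseudodifferential operator on $\partial\Omega$. Hence for any $g \in C(\partial\Omega)$,
\begin{equation*}
H_j(k) g(x) = \int_{\partial\Omega} 2\,\partial_{\nu_y} G_0(k,x,y)\,(\Psi_{\Gamma_j}(k) g)(y)\, dS(y), \qquad x \in \Omega.
\end{equation*}
For $x \in \Omega$ and $y \in \partial\Omega$ we have $x \neq y$, so differentiation under the integral sign is legitimate. Since the free Green's function satisfies $(-\Delta_x - k^2) G_0(k, x, y) = \delta(x-y)$, which vanishes on the complement of the diagonal, we conclude
\begin{equation*}
(-\Delta - k^2) H_j(k) g \;=\; 0 \quad\text{in } \Omega
\end{equation*}
for every $g \in C(\partial\Omega)$ and every index $j$.

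Next I would examine each term in the definition of $P_i(k)$. A typical summand has the form
\begin{equation*}
(-1)^{\ell-1}\, H_{i+\ell-1}(k)\, r_{\partial\Omega}\, H_{i+\ell-2}(k)\, r_{\partial\Omega}\, \cdots\, r_{\partial\Omega}\, H_i(k),
\end{equation*}
which we apply to $u \in C(\partial\Omega)$. Reading from right to left, the composition $r_{\partial\Omega} H_{i+\ell-2}(k)\, r_{\partial\Omega}\cdots r_{\partial\Omega} H_i(k)\, u$ first produces functions in $\Omega$ that are then restricted to $\partial\Omega$, ultimately yielding some $g_{i,\ell} \in C(\partial\Omega)$ (possibly with a distributional interpretation that is standard for layer potentials, but which causes no issue since we only use the interior regularity). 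The outermost operator $H_{i+\ell-1}(k)$ is then applied to $g_{i,\ell}$, and by the previous step the resulting function in $\Omega$ is annihilated by $-\Delta - k^2$.

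Summing over $\ell = 1, \dots, m$ with the signs shown and dividing by $m$ after averaging over $i$, linearity of $(-\Delta - k^2)$ gives $(-\Delta - k^2) P(k) u = 0$. There is essentially no obstacle: the only subtlety is ensuring that the boundary traces $r_{\partial\Omega} H_j(k)$ are well-defined on the ranges of the preceding factors, which is guaranteed by the jump formula \eqref{JUMPS} stating $H_j(k)u|_{\partial\Omega} = (I + N(k))\Psi_{\Gamma_j}(k) u$ (a pseudodifferential plus FIO composition, acting continuously on the relevant spaces).
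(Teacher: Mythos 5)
Your proof is correct and follows essentially the same route as the paper: both arguments reduce to the fact that the double layer potential $\DL$ maps boundary data into solutions of the homogeneous Helmholtz equation in $\Omega$ (which you verify by differentiating under the integral sign away from the diagonal), the paper merely packaging all terms at once as $P_i(k) = \DL\, Q_i(k)$ with $Q_i(k)$ a single boundary operator, while you argue term by term.
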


\begin{proof}

In fact for every $i$ we have $(-\Delta - \k^2) P_i (\k) u = 0$.
We observe that $P_i(k) = \DL Q_i(k)$ with
\begin{equation} \label{PK}
\begin{array}{l} \!\!\!\!\!Q_i(\k) = \Psi_{\Gamma_i} -
\Psi_{\Gamma_{i+2}} \big[(I + N(\k)) \Psi_{\Gamma_{i+1}}\big] +
\cdots +(-1)^{m-1} \Psi_{\Gamma_{i+m-1}}\big[(I +
N(\k))\Psi_{\Gamma_{i+m-2}} \cdots (I + N(\k))
\Psi_{\Gamma_i}\big].
\end{array} \end{equation}  The statement follows since $\DL$ maps $C(\partial \Omega)$
into the solutions of the Helmholtz equation.

\end{proof}

We now make a couple of useful technical observations:
\begin{prop} \label{NON2} We have:
 $$ r_{\partial \Omega}\,H_j(k) \sim   (I + N_0(\k) + N_1(k))
\Psi_{\Gamma_j}.$$
\end{prop}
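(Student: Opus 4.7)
The plan is to apply the jump formula for the double layer potential and then use the microlocal decomposition of $N(k)$ given in Proposition \ref{HZ} to discard the piece $N_2(k)$ via a wave front set composition argument.

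First, by the definition of $H_j(k) = \mathcal{D}\ell(k) \Psi_{\Gamma_j}(k)$ and the jump formula (\ref{JUMPS}) applied from the interior, one has the exact identity
\begin{equation*}
r_{\partial \Omega} H_j(k) \;=\; (I + N(k))\, \Psi_{\Gamma_j}(k).
\end{equation*}
Invoking Proposition \ref{HZ} with the open neighborhood $U$ of $\csd$ still to be chosen, write $N(k) = N_0(k) + N_1(k) + N_2(k)$, so that
\begin{equation*}
r_{\partial \Omega} H_j(k) \;=\; \bigl(I + N_0(k) + N_1(k)\bigr)\Psi_{\Gamma_j}(k) \;+\; N_2(k)\,\Psi_{\Gamma_j}(k).
\end{equation*}
Thus the proposition will be proved once I show that the last term is negligible, i.e.\ $N_2(k)\,\Psi_{\Gamma_j}(k) \sim 0$.

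For this I use the wave front set calculus for semiclassical operators. By construction $\Psi_{\Gamma_j}(k) = (I + N_0(k))^{-1}\chi_{\Gamma_j}(k)$; since $(I+N_0(k))^{-1}$ is a semiclassical pseudodifferential operator (as a parametrix of an order $-1$ perturbation of the identity), the semiclassical operator wave front set of $\Psi_{\Gamma_j}(k)$ is contained in $\Gamma_j \times \Gamma_j$, which is an arbitrarily small neighborhood of $(v_j,\eta_j)$ in $B^*\partial\Omega$. The decisive geometric fact is that $(v_j,\eta_j)$ is the projection of a transversal reflecting ray, so $|\eta_j|<1$ strictly. Hence, shrinking the microlocal neighborhoods if necessary, I may assume $\Gamma_j$ is disjoint from the cosphere $S^*\partial \Omega$. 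On the other hand, the neighborhood $U$ of $\csd \subset S^*\partial\Omega \times S^*\partial\Omega$ can be taken so small that its projections to either factor stay within any prescribed neighborhood of $S^*\partial\Omega$, and in particular are disjoint from $\Gamma_j$.

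With these compatible choices of $\Gamma_j$ and $U$, the composition rule for semiclassical wave front sets forces
\begin{equation*}
\WF\bigl(N_2(k)\,\Psi_{\Gamma_j}(k)\bigr) \subset \WF(N_2(k)) \circ \WF(\Psi_{\Gamma_j}(k)) = \emptyset,
\end{equation*}
since the second factor of any point in $\WF(N_2(k)) \subset U$ lies near $S^*\partial\Omega$ while the first factor of any point in $\WF(\Psi_{\Gamma_j}(k))$ lies in $\Gamma_j$, and these are disjoint. Therefore $N_2(k)\,\Psi_{\Gamma_j}(k)$ has kernel of order $O(k^{-\infty})$ in every $C^s$ norm, which gives $r_{\partial \Omega} H_j(k) \sim (I + N_0(k) + N_1(k))\Psi_{\Gamma_j}$, as claimed. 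The main (mild) obstacle is simply organizing the quantifiers: one must fix $\Gamma_j$ small enough to avoid $S^*\partial\Omega$, then shrink $U$ accordingly in Proposition \ref{HZ}; the transversality $|\eta_j|<1$ of bouncing ball (and more generally transversal reflecting) orbits is what makes this possible.
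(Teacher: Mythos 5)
Your proof is correct and follows essentially the same route as the paper's: the exact jump-formula identity $r_{\partial\Omega}H_j(k)=(I+N(k))\Psi_{\Gamma_j}$, followed by discarding $N_2(k)\Psi_{\Gamma_j}$ because $\WF(N_2)\subset U$ near $\Delta_{S^*\partial\Omega}$ while $\Psi_{\Gamma_j}$ is microsupported in $\Gamma_j$ away from $S^*\partial\Omega$, so the wave front set composition is empty. You merely spell out the quantifier bookkeeping (shrinking $\Gamma_j$ off the cosphere using $|\eta_j|<1$, then shrinking $U$) that the paper leaves implicit.
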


\begin{proof} By the jump formula we know that $r_{\partial \Omega}\,H_j(k)=(I+N(k))\Psi_{\Gamma_j}$. The missing term $N_2$ has its wave front set
contained in $U \times U$ where $U$ is a small neighborhood
$S^*\partial\Omega$. But then $ WF' (N_2) \circ
WF'(\Psi_{\Gamma_j}) = \emptyset. $ So this term of the
composition is negligible in $k$.
\end{proof}

\begin{prop} \label{NPsi} For all $1\leq j < m$, we have:
$$\Psi_{\Gamma_{j+1}}\big[(I + N(\k))\Psi_{\Gamma_{j}} \cdots (I +
N(\k)) \Psi_{\Gamma_1}\big] \sim
\Psi_{\Gamma_{j+1}}\big[N_1(\k)\Psi_{\Gamma_{j}} \cdots N_1(\k)
\Psi_{\Gamma_1}\big].$$
\end{prop}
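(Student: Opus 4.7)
The plan is to expand each factor $(I + N(k))$ in the product using the decomposition $N(k) = N_0(k) + N_1(k) + N_2(k)$ of Proposition~\ref{HZ}, and then to show that, modulo residual operators (those with Schwartz kernels of order $O(k^{-\infty})$ in all $C^s$-norms), only the term in which $N_1(k)$ is chosen at every slot survives. The key algebraic simplification, which lets me absorb the $I$ and $N_0$ contributions together, is the identity
\[
(I + N_0(k))\,\Psi_{\Gamma_i} \;=\; \chi_{\Gamma_i},
\]
which is immediate from the definition $\Psi_{\Gamma_i} = (I+N_0(k))^{-1}\chi_{\Gamma_i}$. Hence
\[
(I+N(k))\,\Psi_{\Gamma_i} \;=\; \chi_{\Gamma_i} \,+\, N_1(k)\Psi_{\Gamma_i} \,+\, N_2(k)\Psi_{\Gamma_i},
\]
and distributing this across all $j$ blocks of the product gives a sum of $3^j$ operator compositions; the claim is that only the all-$N_1$ term is not residual.

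Before ruling out the other terms I shrink the microlocal neighborhoods $\Gamma_1,\ldots,\Gamma_m$ so that they are pairwise disjoint (permissible since the distinguished points $(v_\ell,\eta_\ell) \in \partial\gamma$ are distinct) and simultaneously shrink the neighborhood $U$ of $\csd$ provided by Proposition~\ref{HZ} so that $(\Gamma_p \times \Gamma_q) \cap U = \emptyset$ whenever $p \neq q$. Consider any term in the expansion in which some slot at position $i$ is $\chi_{\Gamma_i}$ or $N_2(k)\Psi_{\Gamma_i}$. The block immediately to its left is either the outer $\Psi_{\Gamma_{j+1}}$ (when $i=j$) or the chosen representative of the slot at position $i+1$; in every case this left neighbor terminates on the right in a pseudodifferential factor microsupported in $\Gamma_{i+1}$. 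In the $\chi$-case, the resulting sub-composition $\Psi_{\Gamma_{i+1}}\chi_{\Gamma_i}$ is a product of two PDOs with disjoint microsupports, hence residual. In the $N_2$-case, wavefront composition and the disjointness condition above give
\[
\WF(\Psi_{\Gamma_{i+1}} N_2(k) \Psi_{\Gamma_i}) \;\subset\; (\Gamma_{i+1}\times\Gamma_i)\cap U \;=\; \emptyset,
\]
so this sub-composition is also residual.

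Since composing a residual operator on either side with any uniformly bounded-order FIO preserves residuality in every $C^s$-norm, the appearance of such a residual sub-block anywhere inside the expanded product makes the entire term residual. Removing these residual contributions leaves precisely the single all-$N_1$ term, which is exactly the right-hand side of the proposition. The main delicate point is the simultaneous microlocal bookkeeping: the neighborhoods $\Gamma_\ell$ and $U$ must be fixed once and for all so that the disjointness and diagonal-exclusion properties above hold uniformly across every one of the $3^j$ terms. Because $j<m$ is bounded and the number of terms is finite, a single sufficiently small choice of these neighborhoods works throughout.
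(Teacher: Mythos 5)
Your proof is correct and follows essentially the same route as the paper's: both rest on the decomposition $N = N_0 + N_1 + N_2$, on discarding the $N_2$ contributions because their wavefront set lies in a small neighborhood of $\csd$ disjoint from the relevant $\Gamma_p\times\Gamma_q$, and on discarding the $(I+N_0(\k))\Psi_{\Gamma_i}$ (i.e.\ $\chi_{\Gamma_i}$) contributions because the adjacent cutoffs $\Psi_{\Gamma_{i+1}}$ and $\chi_{\Gamma_i}$ are pseudodifferential with disjoint microsupports; the paper merely organizes the bookkeeping as an induction on $j$ instead of your one-shot $3^j$-term expansion. One small correction: in the intended application to $\gamma^r$ the points $(v_\ell,\eta_\ell)$ are \emph{not} all distinct (the paper later notes $\Gamma_i=\Gamma_1$ for $i$ odd and $\Gamma_i=\Gamma_2$ for $i$ even), so the $\Gamma_\ell$ cannot be made pairwise disjoint — but your argument only ever uses disjointness of the adjacent pair $\Gamma_{i+1},\Gamma_i$, which does hold, so nothing is lost.
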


\begin{proof}
We argue inductively. For $j=1$, using the proof of Proposition
\ref{NON2} we have
$$\Psi_{\Gamma_{2}}\big[(I + N(\k))\Psi_{\Gamma_{1}}\big] \sim \Psi_{\Gamma_{2}}N_1(k)\Psi_{\Gamma_{1}}+\Psi_{\Gamma_{2}}\big[(I + N_0(\k))\Psi_{\Gamma_{1}}\big].$$
But the last term in the expression above is negligible in $k$, as
the two semi-classical pseudodifferentials $\Psi_{\Gamma_{2}}$ and
$ \big[(I + N_0(\k))\Psi_{\Gamma_{1}}\big]$ are microsupported in
the two disjoint open sets $\Gamma_2$ and $\Gamma_1$ respectively.
Now assume the statement is true for $j-1$. Then we write

$$\Psi_{\Gamma_{j+1}}\big[(I + N(\k))\Psi_{\Gamma_{j}} \cdots (I +
N(\k)) \Psi_{\Gamma_1}\big] \sim
\Psi_{\Gamma_{j+1}}\big[N_1(\k)\Psi_{\Gamma_{j}} \cdots N_1(\k)
\Psi_{\Gamma_1}\big] $$

$$ \qquad \qquad \qquad \qquad \qquad
\qquad \qquad \qquad \qquad \qquad \qquad+
\Psi_{\Gamma_{j+1}}(I+N_0(\k))\Psi_{\Gamma_{j}}\big[N_1
\Psi_{\Gamma_{j-1}} \cdots N_1(\k) \Psi_{\Gamma_1}\big].$$
Similarly, $\Psi_{\Gamma_{j+1}}(I+N_0(\k))\Psi_{\Gamma_{j}}$ is
negligible in $k$ and the second term above is negligible.
\end{proof}

Next we have the following important proposition which implicitly
defines the monodromy operator:

\begin{prop}\label{FIRSTLAST}  We have
$$\begin{array}{l} r_{\partial \Omega} P(\k) \sim I +
M(\k),\qquad (\text{microlocally near}\;\partial\gamma)
\end{array}
$$
where \begin{equation}\label{Monodromy}
M(k)=\frac{1}{m}\sum_{i=1}^m(-1)^{m-1}N_1(\k)\Psi_{\Gamma_{i+m-1}}
\cdots N_1(\k) \Psi_{\Gamma_i}.
\end{equation}
\end{prop}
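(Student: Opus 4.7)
The plan is to apply $r_{\partial\Omega}$ to each $P_i(k)$ defined in (\ref{PK}), reduce the resulting boundary compositions via the jump formula (\ref{JUMPS}) and Propositions \ref{NON2} and \ref{NPsi}, and then exploit a telescoping cancellation inherent in the alternating structure of $P_i(k)$.

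First, combining (\ref{JUMPS}) with Proposition \ref{NON2} yields
$$
r_{\partial\Omega}H_j(k)\sim(I+N_0(k)+N_1(k))\Psi_{\Gamma_j}(k)=\chi_{\Gamma_j}(k)+N_1(k)\Psi_{\Gamma_j}(k),
$$
where the second equality uses $(I+N_0)\Psi_{\Gamma_j}=\chi_{\Gamma_j}$, which is immediate from $\Psi_{\Gamma_j}=(I+N_0)^{-1}\chi_{\Gamma_j}$. For each $i$ and $k\geq 1$ I define the semiclassical Fourier integral operator
$$T_{i,k}:=N_1(k)\Psi_{\Gamma_{i+k-1}}(k)\cdots N_1(k)\Psi_{\Gamma_i}(k),$$
which quantizes $\beta^k$ on a microlocal neighborhood of $(v_i,\eta_i)$, and set $T_{i,0}:=\chi_{\Gamma_i}(k)$.

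The technical core is the inductive identification, microlocally near $\partial\gamma$,
$$Q_{i,j}:=r_{\partial\Omega}H_{i+j}\,r_{\partial\Omega}H_{i+j-1}\cdots r_{\partial\Omega}H_i\sim T_{i,j}+T_{i,j+1},\qquad 0\leq j\leq m-1.$$
The base case $j=0$ is exactly the display above. For the inductive step I would write $Q_{i,j}=r_{\partial\Omega}H_{i+j}\,Q_{i,j-1}\sim(\chi_{\Gamma_{i+j}}+N_1\Psi_{\Gamma_{i+j}})(T_{i,j-1}+T_{i,j})$ and dispose of the four cross terms: both products containing $T_{i,j-1}$ are negligible because the output of $T_{i,j-1}$ is microsupported in $\Gamma_{i+j-1}$, disjoint from $\Gamma_{i+j}$ where $\chi_{\Gamma_{i+j}}$ and $\Psi_{\Gamma_{i+j}}$ live; $\chi_{\Gamma_{i+j}}T_{i,j}\sim T_{i,j}$ because the output of $T_{i,j}$ sits in $\Gamma_{i+j}$ on which the symbol of $\chi_{\Gamma_{i+j}}$ is identically one; and $N_1\Psi_{\Gamma_{i+j}}T_{i,j}=T_{i,j+1}$ by construction. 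This is essentially a repackaging of Proposition \ref{NPsi}.

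Given the claim, the alternating sum defining $P_i(k)$ telescopes:
$$r_{\partial\Omega}P_i(k)=\sum_{j=0}^{m-1}(-1)^j Q_{i,j}\sim\sum_{j=0}^{m-1}(-1)^j(T_{i,j}+T_{i,j+1})=T_{i,0}+(-1)^{m-1}T_{i,m},$$
with every intermediate $T_{i,j}$ appearing twice with opposite signs. Averaging over $i$ and recognizing the second sum as $M(k)$ gives
$$r_{\partial\Omega}P(k)\sim\frac{1}{m}\sum_{i=1}^m\chi_{\Gamma_i}(k)+M(k),$$
and the $\chi_{\Gamma_i}$ microlocally combine to the identity on a neighborhood of $\partial\gamma$, yielding the claim. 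The main obstacle is the inductive identification of $Q_{i,j}$; every subsequent step is just bookkeeping. That identification rests entirely on the microsupport-shifting property of $N_1(k)$ already packaged in Proposition \ref{NPsi} together with the explicit form $(I+N_0)\Psi_{\Gamma_j}=\chi_{\Gamma_j}$, so no new microlocal ingredients are required.
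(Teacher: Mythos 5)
Your proof is correct and follows essentially the same route as the paper: the paper also reduces each composition via the jump formula and Propositions \ref{NON2}--\ref{NPsi}, and its observation that the intermediate $\beta^j$ terms carry a factor $(I-\chi_{\Gamma_{j+1}})$ which is microlocally negligible near $\partial\gamma$ is exactly your telescoping cancellation of $T_{i,j}$ against $\chi_{\Gamma_{i+j}}T_{i,j}\sim T_{i,j}$. The only cosmetic imprecision is that $(I+N_0)\Psi_{\Gamma_j}=\chi_{\Gamma_j}$ holds only modulo $O(k^{-\infty})$ (parametrix), and $\chi_{\Gamma_{i+j}}$ equals one only near $(v_{i+j},\eta_{i+j})$ rather than on all of $\Gamma_{i+j}$, so the step $\chi_{\Gamma_{i+j}}T_{i,j}\sim T_{i,j}$ requires the same shrinking of microlocal neighborhoods of $\partial\gamma$ that the paper invokes.
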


\begin{defn} The monodromy operator for the $m$-link periodic reflecting ray $\gamma$
is the operator $M(k)$ on $L^2(\partial \Omega)$  defined by
$(\ref{Monodromy})$.
\end{defn}

\begin{proof} We define
$$ M_i(\k)=(-1)^{m-1}N_1(\k)\Psi_{\Gamma_{i+m-1}}
\cdots N_1(\k) \Psi_{\Gamma_i},$$ and we show that for every $i$
\begin{equation}\label{Mi} r_{\partial \Omega} P_i(\k) \sim \chi_{\Gamma_i} +
M_i(\k),\qquad (\text{microlocally
near}\;\partial\gamma).\end{equation}
Since$$\frac{1}{m}\sum_{i=1}^m M_i(\k)=M(\k),\quad \text{and}
\quad \frac{1}{m}\sum_{i=1}^m \chi_{\Gamma_i}=I \qquad
(\text{microlocally near}\;\partial\gamma),$$ taking averages of
equations ($\ref{Mi}$) over all $i$ implies Proposition
\ref{FIRSTLAST}. So we only prove ($\ref{Mi}$) for $i=1$. Since
$P_1(k) = \DL Q_1(k)$, by the jump formula (\ref{JUMPS}) and by
(\ref{PK}), we have
$$r_{\partial \Omega} P_1(\k)=(I+N(k))\sum_{j=0}^{m-1}(-1)^{j} \Psi_{\Gamma_{j+1}}\big[(I + N(\k))\Psi_{\Gamma_{j}}
\cdots (I + N(\k)) \Psi_{\Gamma_1}\big],$$ where the $0$-th term
of the sum is defined to be $\Psi_{\Gamma_1}$. Now we apply
Proposition \ref{NPsi} to each term of the above sum and we get

$$r_{\partial \Omega} P_1(\k)\sim(I+N(k))\sum_{j=0}^{m-1}(-1)^{j} \Psi_{\Gamma_{j+1}}\big[N_1(\k)\Psi_{\Gamma_{j}}
\cdots N_1(\k) \Psi_{\Gamma_1}\big]. $$ Hence by substituting
$I+N=(I+N_0)+N_1$ and collecting the terms corresponding to
the same number of iterations of the billiard map $\beta$ (this
number is the same as the number of factors $N_1$ in each term) we
obtain
$$1_{\partial \Omega} P_1(\k)\sim \sum_{j=1}^{m-1}(-1)^{j-1}( I-\chi_{\Gamma_{j+1}})\big[N_1(\k)\Psi_{\Gamma_{j}}
\cdots N_1(\k) \Psi_{\Gamma_1}\big]+\big(\chi_{\Gamma_1}  +
M_1(\k)\big).$$ The $j$-th term of the sum above is a $\k$-FIO
corresponding to $\beta^j$. We show that each of these terms is
microlocally equivalent to $0$ near $\partial\gamma$. Hence only
$\chi_{\Gamma_1}$ corresponding to $\beta^0$ and $ M_1(\k)$
corresponding to $\beta^m$ survive and the proposition follows.

Let us discuss why
$$( I-\chi_{\Gamma_{j+1}})\big[N_1(\k)\Psi_{\Gamma_{j}}\cdots N_1(\k) \Psi_{\Gamma_1}\big]\sim 0. \qquad (\text{microlocally near} \; \partial\gamma)$$
This is true because
$$ WF'(N_1(\k)\Psi_{\Gamma_{j}}\cdots N_1(\k) \Psi_{\Gamma_1})
\subset \{\big(\beta^j(y,\eta), (y,\eta)\big);\;(y,\eta)\in
\Gamma_1 \}\subset \Gamma_{j+1} \times \Gamma_1,$$ and because
$I-\chi_{\Gamma_{j+1}}$ is micro-supported away from
$(v_j,\eta_j)$. Thus by passing to a smaller microlocal
neighborhood of $\partial\gamma$ in $B^*\partial \Omega$, we
obtain a negligible operator.
\end{proof}

\subsection{Microlocal parametrix for the interior Dirichlet
problem in terms of $M(k)$}

In this section we construct a microlocal parametrix for the
Dirichlet resolvent of $\Omega$ near $\gamma$ in terms of the
monodromy operator $M(\k)$. It is a microlocal version of the
global formula (\ref{FN}) of Fredholm-Neumann. The discussion in
the Neumann case is similar.

\begin{prop}\label{PLAMBDAa}  Microlocally in a neighborhood of $\gamma \times \gamma \subset T^*\Omega\times T^*\Omega$,  we have
 $$ R_{\Omega}^D(\k) - R_0(\k)
 \sim - P(\k) (I + M (\k) )^{-1} \SL^t.$$
\end{prop}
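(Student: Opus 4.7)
The plan is to verify that the right-hand side is a microlocal parametrix for the interior Dirichlet problem, then invoke microlocal uniqueness to conclude. This is a microlocal version of the Fredholm--Neumann identity $(\ref{FN})$, with $P(k)$ playing the role of $\DL$ and the monodromy operator $M(k)$ playing the role of $N(k)$, except that the identities now hold only microlocally near $\pa\gamma$.

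I would set $T(k) := R_0(k) - P(k)(I + M(k))^{-1} \SL^t$, interpreting $(I+M(k))^{-1}$ as the formal Neumann series $\sum_{j\ge 0}(-M(k))^j$. The first step is to check that $(-\Delta - k^2)T(k) = I$. Since $P(k) = \DL\,Q(k)$ by construction, and $\DL$ maps boundary data into solutions of the Helmholtz equation, one has $(-\Delta - k^2)P(k) = 0$; combined with $(-\Delta-k^2)R_0(k) = I$ this gives the equation. The second step is to verify the homogeneous Dirichlet condition microlocally near $\pa\gamma$. The symmetry $G_0(k,x,y) = G_0(k,y,x)$ of the free Green's function yields $r_{\pa\Omega}R_0(k) = \SL^t$ as operators $L^2(\Omega) \to L^2(\pa\Omega)$, and Proposition \ref{FIRSTLAST} supplies $r_{\pa\Omega}P(k) \sim I + M(k)$ microlocally near $\pa\gamma$. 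Hence
\begin{equation*}
r_{\pa\Omega} T(k) \sim \SL^t - (I+M(k))(I+M(k))^{-1}\SL^t \sim 0
\end{equation*}
microlocally near $\pa\gamma$.

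Having produced an operator $T(k)$ such that $T(k) - \RD(k)$ yields Helmholtz-harmonic distributions with boundary trace that is negligible in $k$ microlocally near $\pa\gamma$, the final step is to invoke microlocal well-posedness of the interior Dirichlet problem along transversal reflecting rays, i.e., the Melrose--Sj{\"o}strand propagation of singularities at the boundary, to propagate this vanishing to a microlocal neighborhood of $\gamma \times \gamma \subset T^*\Omega \times T^*\Omega$, giving the claimed equivalence. The main technical obstacle is to give a rigorous meaning to $(I+M(k))^{-1}$: each power $M(k)^j$ is a semiclassical FIO quantizing the iterate $\beta^{jm}$ of the billiard map, and the Neumann series does not converge in any obvious operator norm. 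In the trace-formula application this is defused by the cutoff $\hat\rho$ supported near $L_\gamma$, which selects only finitely many iterates and so reduces the formal sum to a controlled finite one, while in the statement of the proposition itself one may interpret the inverse at the level of formal power series of semiclassical FIOs associated to successive iterates of $\gamma$.
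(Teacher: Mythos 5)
Your argument is essentially the paper's own proof: one checks that $-P(\k)(I+M(\k))^{-1}\SL^t$ solves the Helmholtz equation in $x$ (because $P(\k)=\DL\, Q(\k)$ and $\DL$ maps boundary data to Helmholtz solutions), verifies via Proposition \ref{FIRSTLAST} that its boundary trace agrees microlocally near $\pa\gamma$ with that of $G_{\Omega}^D-G_0$, and then invokes uniqueness of microlocal solutions of the interior Dirichlet problem. So the structure is correct and matches the paper.

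The one point where you diverge, and where your version is weaker than necessary, is the treatment of $(I+M(\k))^{-1}$. You assert that the Neumann series ``does not converge in any obvious operator norm'' and propose to read the inverse as a formal power series of semiclassical FIOs; this would leave the proposition, which is an honest operator statement, not quite proved. In fact Lemma \ref{normofM} shows that $I+M(\k)$ is genuinely invertible on $L^2(\pa\Omega)$: since $\k=\lambda+i\tau\log\lambda$ has positive imaginary part and the kernel of $N_1(\k)$ is supported where $|x-y|\ge t_0>0$, the factor $e^{i\k|x-y|}$ contributes a damping $e^{-\tau t_0\log\lambda}=\lambda^{-\tau t_0}$, whence $\|N_1(\k)\|_{L^2}=O(|\k|^{-a})$ for any prescribed $a$ once $\tau$ is large enough, and hence $\|M(\k)\|_{L^2}<1$. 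The inverse is therefore a bounded operator and no formal-series interpretation is needed; the finite geometric expansion with remainder (\ref{GS}) enters only later, in Proposition \ref{ASY}, where the remainder is controlled by the same norm bound. Apart from this, your appeal to propagation of singularities to upgrade the boundary statement to one valid near $\gamma\times\gamma$ is a reasonable gloss on what the paper compresses into the phrase that microlocal solutions are unique.
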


\begin{proof}

Let us look at the Schwartz kernels of the both hand sides in a
microlocal neighborhood of $\gamma \times \gamma$. We show that

$$G_{\Omega}^D(\k,x,y) - G_0(\k,x,y)
 \sim  -\big(P(\k) (I + M (\k) )^{-1}\big) (\SL^t(\k,x,
 y)), \qquad (\text{microlocally near}\; \gamma \times \gamma)$$ where the operator $P(\k) (I + M (\k)
 )^{-1}: C(\partial \Omega) \rightarrow C(\Omega)$ acts on the
 first component $x$ of the kernel $\SL^t(\k,x,
 y)$. To prove this, since microlocal solutions are unique, it is
 enough to show that for all $y$ the right side is a solution of the Dirichlet problem
$$\left\{ \begin{array}{l} (-\Delta_x - \k^2)\big(P(\k) (I + M (\k) )^{-1}\big) (\SL^t(\k, x,
 y))
= 0, \\ \\
r_{\partial\Omega}\big(P(\k) (I + M (\k) )^{-1}\big) (\SL^t(\k, x,
 y))\sim-(G_{\Omega}^D -
 G_0)(\k,x,y), \quad \text{microlocally for}\; x\; \text{near}\; \partial \gamma.
\end{array} \right.
$$
But this is clear because by Proposition \ref{FIRSTLAST},
microlocally for $x\in\partial \Omega$ near $\partial \gamma$, we
have for all $y\in \Omega$
$$r_{\partial\Omega}\big(P(\k) (I + M (\k)
)^{-1}\big) (\SL^t(\k, x,
 y))\sim (I + M (\k)
)(I + M (\k) )^{-1} \SL^t(\k, x,
 y)$$
 $$\qquad \qquad \qquad \qquad \qquad \qquad \qquad \sim -  (G_{\Omega}^D -
 G_0)(\k,x,y).$$
 We note that the operator $I+M(k)$ is invertible by Lemma \ref{normofM}.
\end{proof}

\section{Trace formula and Monodromy operators}
 Here we assume $L_\gamma$ is the only length in the support of
$\hat \rho \in C^{\infty}_0$ and $\hat \rho(t)=1$ near $L_\gamma$.

From Proposition \ref{PLAMBDAa}, we immediately have a reduction
of the trace to the boundary.

\begin{prop} \label{SLDL} We have:

$$ Tr_{\Omega} (R_{\Omega,\rho}^D(k) - R_{0,\rho}(k)) \sim - \; Tr_{\partial \Omega} (\rho *\,k\, \SL^t \DL Q(k) (I + M(\k))^{-1}).$$

\end{prop}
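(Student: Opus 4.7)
The plan is to obtain this trace formula directly from the microlocal parametrix of Proposition \ref{PLAMBDAa}, combined with the factorization $P(k) = \DL\, Q(k)$ (immediate from the definition of $P_i(k)$ in (\ref{PK})) and cyclicity of the trace. The factor of $k$ and the convolution with $\rho$ arise purely from the definition of the smoothed resolvent in (\ref{RR}).

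Concretely, Proposition \ref{PLAMBDAa} gives, microlocally near $\gamma \times \gamma$,
$$R_\Omega^D(k) - R_0(k) \sim -\DL\, Q(k)(I + M(k))^{-1}\SL^t.$$
Multiplying by $k$, convolving with $\rho$, and taking the trace over $\Omega$ produces
$$Tr_\Omega(R_{\Omega,\rho}^D(k) - R_{0,\rho}(k)) \sim -Tr_\Omega\bigl(\rho * k\, \DL\, Q(k)(I + M(k))^{-1}\SL^t\bigr).$$
Now $\DL\, Q(k)(I + M(k))^{-1}$ maps $C(\partial\Omega) \to C(\Omega)$ while $\SL^t$ closes the loop $C(\Omega) \to C(\partial\Omega)$, so cyclicity of the trace rewrites the right-hand side as
$$-Tr_{\partial\Omega}\bigl(\rho * k\, \SL^t\, \DL\, Q(k)(I + M(k))^{-1}\bigr),$$
which is the asserted formula.

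The principal obstacle is to justify that the microlocal equivalence of Proposition \ref{PLAMBDAa} yields a genuine trace identity, and that the cyclicity manipulation is legitimate in a trace-class sense. The key point is the hypothesis on $\hat\rho$: by assumption $L_\gamma$ is the only length in $\supp\hat\rho$, and condition (iii) of (\ref{DL}) guarantees that $\gamma$ (together with the chosen iterate) is the unique closed billiard orbit of that length. By the Poisson relation for manifolds with boundary \cite{AM,GM,PS} and the Laplace transform identity (\ref{RTAU}), $\rho * (\mu\, E_\Omega^D(\mu))$ is microlocally supported on the flow-out of $\gamma$, so the trace integrand concentrates on $\gamma \times \gamma$, precisely where Proposition \ref{PLAMBDAa} applies. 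Any microlocal remainder outside this neighborhood contributes only $O(k^{-\infty})$ after the $\rho$-smoothing, consistent with the asymptotic equivalence $\sim$. The same smoothing renders the boundary operator $\SL^t\, \DL\, Q(k)(I + M(k))^{-1}$ effectively trace class on $\partial\Omega$ along the logarithmic ray $k = \lambda + i\tau\log\lambda$, legitimizing the cyclicity step and ensuring that the resulting boundary trace has an asymptotic expansion in powers of $k^{-1}$ matching that of Theorem \ref{BBL}.
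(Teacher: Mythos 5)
Your proposal is correct and follows essentially the same route as the paper: apply Proposition \ref{PLAMBDAa}, use cyclicity of the trace to move $\SL^t$ to the front (converting the interior trace to a boundary trace), and substitute $P(k)=\DL\,Q(k)$. The only difference is cosmetic — where you sketch the justification that the trace microlocalizes to a neighborhood of $\gamma$ via the Poisson relation and the support condition on $\hat\rho$, the paper simply cites \cite{Z4} \S 3.3 for that fact.
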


\begin{proof}
First of all we note that the regularized trace $ Tr_{\partial
\Omega} (R_{\Omega,\rho}^D(k) - R_{0,\rho}(k))$ can be
microlocalized to $\gamma$, i.e. if $\chi_{\gamma}$ is a
microlocal cutoff around $\gamma$, then
$$ Tr_{\partial
\Omega}  (R_{\Omega,\rho}^D(k) - R_{0,\rho}(k)) \sim Tr_{\partial
\Omega} (\chi_{\gamma}(R_{\Omega,\rho}^D(k) - R_{0,\rho}(k))
\chi_{\gamma}). $$ For a proof of this fact, see \cite{Z4} \S
$3.3$. Now by Proposition \ref{PLAMBDAa}, we have
$$Tr _{\Omega}(R_{\Omega,\rho}^D(k) - R_{0,\rho}(k)) \sim - Tr_{\Omega} (\rho *\, k P(k)
(I + M(\k))^{-1}\SL^t)$$ $$ \qquad \qquad \qquad \qquad \qquad
\quad \sim - Tr_{\partial \Omega} (\rho * \,k \SL^t P(k) (I +
M(\k))^{-1}).$$ The formula follows by substituting $  P(\k) =
\;\DL Q(\k)$.
\end{proof}

This formula is useful but somewhat unwieldy. As proved in
\cite{Z3}, $\SL^t \DL = D_0 + D_1$ where $D_0$ is a
$\k$-pseudodifferential operator and where $D_1$ quantizes
$\beta$. The same analysis could be used here. But it is simpler
to use the alternative in the next section.

\subsection{Interior plus exterior}

If we take the direct sum of the interior Dirichlet and exterior
Neumann  resolvents, then the trace formula simplifies in that we
can sum up the interior and exterior $\SL^t \circ \DL$ operators
to obtain $\frac{1}{2k}N'(k):=\frac{1}{2k}(\partial/\partial
k)N(k)$.

\begin{prop} \label{SUM} We have:
$$ Tr_{\mathbb R^n}(R_{\Omega,\rho}^D(\k) \oplus R_{\Omega^c,\rho}^N(k)  -
R_{0,\rho}(\k)) = - \int_{\R} \rho(\mu)  Tr\big( N'(k-\mu)
Q(k-\mu) (I + M(k-\mu))^{-1} \big) d \mu.
$$\end{prop}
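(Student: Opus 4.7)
The plan is to mirror the monodromy construction and boundary trace reduction of Propositions \ref{PLAMBDAa} and \ref{SLDL} in the exterior Neumann setting, sum the interior and exterior reductions, and collapse the resulting boundary operator using a Krein-type layer potential identity that produces $N'(k)$.

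First, I would run the entire construction of Section 4 in parallel for the exterior Neumann problem. Since the billiard map on $\partial\Omega$ is an intrinsic object that does not distinguish the interior from the exterior, the microlocal cutoffs $\Psi_{\Gamma_j}$, the operator $N_1(k)$, the monodromy operator $M(k)$ of (\ref{Monodromy}), and the operator $Q(k)$ of (\ref{PK}) are the same in both settings. Using the opposite-sign version of the jump formula (\ref{JUMPS}) one obtains an exterior analogue of Proposition \ref{PLAMBDAa},
$$ R_{\Omega^c}^N(k) - R_0(k) \sim -P^{ext}(k)(I+M(k))^{-1}\SL^t, \qquad P^{ext}(k) = \mathcal{D}\ell^{ext}(k)\,Q(k), $$
where $\mathcal{D}\ell^{ext}(k)$ denotes the double-layer potential viewed as a map $C(\partial\Omega)\to C(\Omega^c)$. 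The cyclicity-of-trace argument of Proposition \ref{SLDL}, applied verbatim to this exterior formula, gives
$$ Tr_{\Omega^c}\bigl(R_{\Omega^c,\rho}^N(k) - R_{0,\rho}(k)\bigr) \sim -Tr_{\partial\Omega}\bigl(\rho * k\,\SL^t\mathcal{D}\ell^{ext}(k)\,Q(k)(I+M(k))^{-1}\bigr). $$

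Adding the interior formula of Proposition \ref{SLDL} to the exterior counterpart and factoring $Q(k)(I+M(k))^{-1}$ on the right, the combined boundary operator becomes $k\,\SL^t(\DL + \mathcal{D}\ell^{ext}(k))$. The heart of the proof is then the layer potential identity
$$ 2k\,\SL^t\bigl(\DL + \mathcal{D}\ell^{ext}(k)\bigr) \;=\; N'(k), $$
which one derives by differentiating $(\Delta_x+k^2)G_0(k,x,y) = -\delta(x-y)$ with respect to $k$, writing $N'(k) = 2\partial_{\nu_y}\partial_k G_0(k)\big|_{\partial\Omega\times\partial\Omega}$, and applying Green's formula separately on $\Omega$ and on $\Omega^c$ together with the jump formulas (\ref{JUMPS}); the interior and exterior contributions add rather than cancel because the unit normal enters with opposite sign in the two Green's identities, precisely compensating the opposite-sign jump relation for $\DL$ across $\partial\Omega$. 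This is the main obstacle: one must track the signs carefully so the two contributions combine into exactly $N'(k)$ with the correct factor $2k$.

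Substituting the identity into the combined boundary trace and unwinding the convolution $\rho *$ through the definition (\ref{RR}) of the smoothed resolvent yields
$$ Tr\bigl(R_{\Omega,\rho}^D(k)\oplus R_{\Omega^c,\rho}^N(k) - R_{0,\rho}(k)\bigr) \;=\; -\int_\R \rho(\mu)\,Tr\bigl(N'(k-\mu)\,Q(k-\mu)(I+M(k-\mu))^{-1}\bigr)\,d\mu, $$
which is the asserted identity. The remaining issues are routine bookkeeping: checking that the microlocal localizations of Section 4 concentrate near $\gamma$, so that off-diagonal remainders are $O(k^{-\infty})$ in the trace (this follows from Proposition \ref{HZ} together with the wavefront disjointness used in the proofs of Propositions \ref{NON2} and \ref{NPsi}), and confirming that the pseudodifferential pieces $(I+N_0(k))^{-1}$ absorbed into the parametrices contribute no leading-order terms to the trace along $\gamma$.
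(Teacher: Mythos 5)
Your overall architecture coincides with the paper's: combine the interior Dirichlet and exterior Neumann resolvents precisely so that the composition $\SL^t\circ\DL$ becomes an integral over all of $w\in\R^n$, and then use the identity $\int_{\R^n}G_0(k,x,w)\,\partial_{\nu_y}G_0(k,w,y)\,dw=\tfrac{1}{2k}N'(k,x,y)$ (your ``Krein-type'' identity, which is the paper's (\ref{KERNEL}) and is indeed most cleanly seen from $\partial_k R_0(k)=2kR_0(k)^2$ rather than from Green's formula). The cycling of $\SL^t r_{\Omega}$ and $\SL^t r_{\Omega^c}$ in the two traces and the final convolution with $\rho$ are also as in the paper.

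The genuine gap is in your derivation of the exterior analogue of Proposition \ref{PLAMBDAa}. You propose to obtain it by running the double-layer monodromy construction with ``the opposite-sign version of the jump formula (\ref{JUMPS}).'' But (\ref{JUMPS}) records the \emph{boundary values} $(\DL f)_{\pm}=(\pm I+N(k))f$; for the exterior \emph{Neumann} problem one must match normal derivatives, and the normal derivative of a double-layer ansatz is the hypersingular operator, not $\pm I+N(k)$. So the construction does not go through verbatim, and your intermediate formula $R^N_{\Omega^c}(k)-R_0(k)\sim -\mathcal{D}\ell^{ext}(k)\,Q(k)(I+M(k))^{-1}\SL^t$, while correct, is not justified by the argument you give. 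The paper's route is to represent the exterior Neumann Green's function with a \emph{single}-layer ansatz, $R^N_{\Omega^c}(k)-R_0(k)=-\SL(I+N^t(k))^{-1}\DL^t$ (where the transposed jump relation for $\partial_\nu\SL$ produces $I+N^t$), and then to use the symmetry of $r_{\Omega^c}(R^N_{\Omega^c}-R_0)r_{\Omega^c}$ under transposition to conclude that its kernel agrees, on the diagonal --- which is all the trace sees --- with that of $-r_{\Omega^c}\DL(I+N)^{-1}\SL^t r_{\Omega^c}$, i.e.\ with the same parametrix used in the interior. You need this transpose step (or some substitute for it); without it the exterior reduction is unsupported. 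A second, smaller point: your claim that the exterior construction yields the same $M(k)$ because ``the billiard map is intrinsic'' is not the right reason --- the exterior billiard dynamics is genuinely different --- the identification again comes from the transpose identity, not from an exterior dynamical construction.
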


\begin{proof}

We first derive an analogue of Proposition \ref{PLAMBDAa} for the
exterior Neumann problem. We then take the trace of the direct sum
of the interior Dirichlet and exterior Neumann resolvents.

We construct a parametrix for the exterior Neumann problem by a
modification of the method used for the interior Dirichlet
problem. The discussion of the interior was motivated by the
double layer representation for the interior Dirichlet Green's
function. For the exterior Neumann problem we use the single layer
representation of the exterior Neumann Green's function,
$R^N_{\Omega^c}(k) - R_0(k) = -\SL (I + N^{t}(k))^{-1} \DL^t$,
where the superscript $t$ denotes the transpose. This formula is
proved by expressing the left side as $\SL \psi$ for some $\psi$,
taking the normal derivative from the exterior and solving for
$\psi$. We then consider $$r_{\Omega^c} (R^N_{\Omega^c}(k) -
R_0(k) ) r_{\Omega^c},$$ where $r_X$ is the charateristic function
of $X$. We observe that this operator is symmetric, i.e. equals
its transpose. It follows that $$\big(r_{\Omega^c}
(R^N_{\Omega^c}(k) - R_0(k) ) r_{\Omega^c}\big)(x,y) =
-\big(r_{\Omega^c} \DL (I + N)^{-1} \SL^t
r_{\Omega^c}\big)(y,x).$$ Therefore, at least on the diagonal, we
can use the same parametrix formula in the exterior.

We now complete the proof of Proposition \ref{SUM} by  taking the
trace on $\R^n$ of the direct sum of the two operators,
\begin{equation} \label{NS} \left\{ \begin{array}{l} r_{\Omega} (R_{\Omega}^D(\k) - R_0(\k))
r_{\Omega}
 \sim - r_{\Omega} P(\k) (I + M (\k) )^{-1} \SL^t r_{\Omega} \qquad (\text{microlocally near}\; \gamma) \\ \\
r_{\Omega^c} (R^{N}_{\Omega^c} (\k) - R_0(\k)) r_{\Omega^c}
 \sim -r_{\Omega^c} P(\k) (I + M (\k) )^{-1} \SL^t
 r_{\Omega^c} \qquad  (\text{microlocally near}\; \gamma)
\end{array} \right.
\end{equation}
In taking the trace we may cycle $ \SL^t r_{\Omega} $ to the front
in the first trace and $\SL^t r_{\Omega^c}$ to the front in the
second trace. We then add them to get,
\begin{equation}\begin{array}{lll}  Tr_{\R^n}  \left(R_{\Omega}^D(\k) \oplus R^{N}_{\Omega^c}
(\k)- R_0(k) \right) \sim -Tr_{\partial \Omega} \SL^t \DL Q(k) (I
+ M (\k) )^{-1}, \end{array} \end{equation} where $\SL^t \circ
\DL$ has the kernel
\begin{equation} \label{KERNEL} \int_{\R^n} G_0(k , x, w) \partial_{\nu_y} G_0(k, w, y) dw =
\frac{1}{2k}  N'(k, x, y).  \end{equation} For a proof of this
simple fact see equation (19) of \cite{Z5}. This indeed is why the
interior Dirichlet and exterior Neumann problems were combined and
explains the sense in which they are complementary. We then
convolve with $\rho.$
\end{proof}
\begin{rem} We notice that here the exterior trace $Tr \big(r_{\Omega^c} (R^{N}_{\Omega^c,\rho} (\k) - R_{0,\rho}(\k))
r_{\Omega^c}\big)$ is negligible in $k$ and it is added only to
simplify the expression in Prop \ref{SLDL} to the more convenient
expression in Prop \ref{SUM}.

\end{rem}

We now use Proposition \ref{SUM} to obtain asymptotics of the
trace. The next step is to expand $(I +M)^{-1}$ in a finite
geometric (Neumann) series with remainder. We have
\begin{equation} \label{GS} (I \!+\! M)^{-1} = \sum_{n = 0}^{n_0} (-1)^n \; M^n + (-1)^{n_0 + 1} \; M^{n_0 + 1}  (I \!+\! M)^{-1}.\end{equation}
The following proposition shows that, in
calculating a given order of Balian-Bloch invariant $B_{\gamma,
j}$, we may neglect a sufficiently high remainder of the expansion
(\ref{GS}).

\begin{prop} \label{ASY}  Assume that $k=\lambda+i\tau \log \lambda$. For each  order $|k|^{-J}$ in the trace expansion
 there exists $n_0(J)$ such that

$$
\begin{array}{ll} (i) \; \;Tr \int_{\R} \rho(\mu)  M(k-\mu)^{n_0(J) + 1}
(I \!+\! M(k-\mu))^{-1} N'(k-\mu) Q(k-\mu)  d \mu = O(|k|^{-J -
1}),\\ \\

(ii)\; \; Tr_{\Omega} R_{\Omega,\rho}^D(k)= \sum_{n = 0}^{n_0(J)}
(-1)^n Tr  \int_{\R} \rho(\mu)\; M(k-\mu)^n N'(k-\mu) Q(k-\mu) d
\mu+ O(|k|^{-J - 1}).
\end{array}$$
\end{prop}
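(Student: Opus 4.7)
The statement splits into the remainder estimate (i) for the finite Neumann expansion of $(I + M(k))^{-1}$ and its consequence (ii) for the smoothed interior resolvent trace. My plan is to establish (i) via operator-norm decay of $M(k)$ along the logarithmic ray $k = \lambda + i\tau\log\lambda$, and then deduce (ii) by substituting the finite Neumann expansion (\ref{GS}) into the trace identity of Proposition~\ref{SUM}.

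The crux is to show that $\|M(k)\|_{L^2(\partial\Omega) \to L^2(\partial\Omega)}$ decays as $\lambda \to \infty$ along the ray. Since $|e^{ik|x-y|}| = \lambda^{-\tau|x-y|}$ and the kernel of $N_1(k)$ from (\ref{a1}) is cut off by $\phi_1$ to the region $|x-y| \geq t_0 > 0$, one obtains the pointwise bound
$$|N_1(k,x,y)| \leq C |k|^{(n-1)/2} \lambda^{-\tau t_0} \phi_1(|x-y|) |x-y|^{-(n-1)/2}.$$
The $L^2$-boundedness theorem for semiclassical Fourier integral operators of order zero (as used in \cite{HZ}) then yields $\|N_1(k)\Psi_{\Gamma_j}\|_{L^2 \to L^2} \leq C \lambda^{-\tau t_0}$. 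Composing the $m$ factors appearing in (\ref{Monodromy}) produces
$$\|M(k)\|_{L^2 \to L^2} \leq C_m \lambda^{-\tau m t_0}.$$
In particular, for $\lambda$ sufficiently large, $I + M(k)$ is invertible with $\|(I+M(k))^{-1}\|_{L^2 \to L^2} \leq 2$.

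For (i), the operators $N'(k)$ and $Q(k)$ are semiclassical operators of finite order on the compact boundary $\partial\Omega$, so $\|N'(k)Q(k)\|_{\mathrm{tr}} \leq C|k|^N$ for some $N$. Applying the cyclic trace inequality $|Tr(AB)| \leq \|A\|_{L^2 \to L^2}\|B\|_{\mathrm{tr}}$ with $A = M(k-\mu)^{n_0+1}(I+M(k-\mu))^{-1}$ and $B = N'(k-\mu)Q(k-\mu)$ gives
$$\bigl| Tr \bigl( M(k-\mu)^{n_0+1}(I+M(k-\mu))^{-1} N'(k-\mu)Q(k-\mu) \bigr) \bigr| \leq C' \lambda^{-\tau(n_0+1) m t_0 + N}.$$
Choosing $n_0(J)$ so that $\tau(n_0(J)+1) m t_0 \geq J + 1 + N$ makes the right side $O(|k|^{-J-1})$; the $\mu$-integration against the Schwartz function $\rho$ preserves this bound since only $|\mu| \leq C\log\lambda$ contributes modulo a rapidly decaying tail. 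This proves (i). Part (ii) then follows by applying Proposition~\ref{SUM} (with the exterior contribution absorbed into the negligible error by the Remark after that proposition), substituting the finite Neumann expansion (\ref{GS}) for $(I + M(k))^{-1}$, and invoking (i) to control the remainder.

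The principal technical point, and the reason the complex spectral ray is used throughout the paper, is to extract genuine operator-norm decay for $M(k)$: on the real axis $\|M(k)\|$ is merely bounded and no finite truncation of the Neumann series would suffice. The nontrivial input is the uniform semiclassical $L^2$-boundedness of the layer potential operators along the logarithmic ray, which requires careful tracking of the Hankel amplitudes $a_1(k|x-y|)$ under complexification; this is carried out in \cite{HZ} and will be cited rather than redone.
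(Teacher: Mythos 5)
Your proof is correct and follows essentially the same route as the paper: both arguments rest on the operator-norm decay $\|M(k-\mu)\|_{L^2}\lesssim |k|^{-a}\langle\mu\rangle^{b}$ coming from the damping factor $e^{-\tau t_0\log\lambda}$ produced by the complex ray together with the cutoff $\phi_1$ supported in $|x-y|\ge t_0$, followed by choosing $n_0(J)$ large and feeding the truncated Neumann series into Proposition~\ref{SUM}. The only cosmetic difference is that the paper bounds $\|N_1(k-\mu)\|$ by a direct Schur test on the kernel (accepting a polynomial loss in $|k|$ and $\langle\mu\rangle$ that is absorbed by taking $\tau$ large), whereas you invoke semiclassical FIO $L^2$-boundedness; either suffices.
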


\begin{proof} Part $(ii)$ is easily proved by combining part $(i)$, Proposition \ref{SUM} and (\ref{GS}).
It remains to estimate the remainder and show pat $(i)$. For this, we
need to establish an $L^2$- norm estimate for the operator norm of
$M(k-\mu)=M(\lambda-\mu + i \tau \log \lambda)$ for sufficiently
large $\tau$.

The proof is implied by the following norm estimate, which is analogous to Lemma 6.2 of
\cite{SZ}. Let $t_0$ be the constant in \S \ref{Structure}. We
note that the monodromy operator depends on a choice of $t_0$
although it is not indicated in the notation.

\begin{lem}\label{normofM} Let $k=\lambda+i\tau\log\lambda$. For every $a>0$ there exists $\tau>0$ and constants $b, C > 0$
such that,
$$||M(k-\mu)||_{L^2} \leq C |k|^{-a}<\mu>^{b}. $$ \end{lem}

To prove the Lemma,  we observe that by  Proposition \ref{FIRSTLAST},
 \begin{equation}\label{Monodromya}
||M(k-\mu)|| \leq C(|k| <\mu>)^b ||N_1(k-\mu)||^m,
\end{equation}
For some integers $C$ and $b$. We will not relabel these constants
in the course of our estimates.

We recall (see (\ref{a1})) that $N_1(k-\mu)$ has Schwartz kernel
\begin{equation} \label{a}
C(k-\mu)^{n-1} e^{i (k-\mu) |x-y|} \phi_1(|x-y|) a_1 ((\k-\mu)
|x-y|)\langle \frac{x - y}{|x - y|}, \nu_{y} \rangle,
\end{equation}
 where
$\phi_1(t)$ is supported in $t \geq t_0$ for some $t_0 > 0$. We estimate its norm by the Schur estimate,
\begin{equation} \begin{array}{lll} ||N_1(k-\mu)|| & \leq &  C|k-\mu|^{n-1}  \sup_{x \in \partial \Omega}
\int_{\partial \Omega} \big|e^{i (\lambda + i \tau \log \lambda)
|x-y|} \phi_1(|x-y|) a_1 ((k-\mu) |x-y|)\big| dS(y)
\\ && \\
& \leq & C |k-\mu|^{n-1} e^{- \tau \log \lambda t_0}  \sup_{x \in
\partial \Omega}
\int_{\partial \Omega}  \big| \phi_1(|x-y|) a_1 ((k-\mu)|x-y|)\big| dS(y)\\ && \\
& \leq & C <\mu>^{2n} e^{- (\tau t_0 - \epsilon) \log \lambda},
\end{array}
\end{equation} where we estimate $|k-\mu|^{n-1} \sup_{x, y \in \partial
\Omega, |x-y|\geq t_0} |a_1 ((k-\mu)  |x-y|)| \leq C
|k|^{2n}<\mu>^{2n}$.

Since we can choose any small $\epsilon > 0$ and also any large
$\tau
>0$, it is clear that, for any $a > 0$, there exist $\epsilon$ and
$\tau$ such that $||M(k-\mu)|| \leq C |k|^{- a}<\mu>^b. $ This
proves the Lemma and hence the first part of Proposition
\ref{ASY}.

\end{proof}

\subsection{Trace for the iterations of a bouncing ball orbit}

We now analyze the trace in part (ii) of Prop \ref{ASY} when it is
specialized to the $r$th iterate $\gamma^r$ of a bouncing ball
orbit, which has $m=2r$ links. We observe that $Q(\k-\mu)$  is a
sum of terms quantizing $\beta^0, \beta^1, \dots, \beta^{2r}$. Let
us write $q_j(k-\mu)$ for the term quantizing $\beta^j$. We note
that based on this notation, we have $q_{2r}(k-\mu)=M(k-\mu)$
where $M(k-\mu)$ is the monodromy operator for $\gamma^r$. It
follows that
$$N'(k-\mu) \, Q(\k-\mu) $$
is a sum of terms quantizing $\beta^0, \dots, \beta^{2r+1}$. On
the other hand if we use the monodromy operator for $\gamma^r$,
then $(I + M(\k-\mu))^{-1}$ is a sum of terms quantizing $\beta^0,
\beta^{2r}, \beta^{4r}, \cdots$. Therefore only three terms of
$N'(k-\mu)  Q(k-\mu)  (I + M(k-\mu))^{-1}$ are associated to
$\gamma^{2r}$ and can contribute to the trace:
\begin{enumerate}

\item $ - N_0'(\k-\mu) \,M(\k-\mu)$;

\item $N_0'(\k-\mu)\, q_{2r}(\k-\mu) =  N_0'(\k-\mu)\, M(\k-\mu)$;

\item $ N_1'(\k-\mu)\, q_{2r - 1}(\k-\mu) $.
\end{enumerate}
Here, we use that $N'_0$ is associated to $\beta^0$ and $N_1'$ is
associated to $\beta$. We notice the terms (1) and (2) cancel and
hence only the term (3) contributes to the trace. Also we notice
that for the $r$-th iteration of a bouncing ball orbit we have
only two vertices and therefore for $i$ odd we have
$\Gamma_i=\Gamma_1$ and for $i$ even we have $\Gamma_i=\Gamma_2$.
Let us denote by
$$ \Gamma_+=\Gamma_1, \quad \text{and} \quad \Gamma_-=\Gamma_2, $$
the microlocal neighborhoods corresponding to the top and bottom
vertex respectively. Thus, by these notations we have

\begin{prop}\label{NFORM} Let $\hat{\rho} \in C_0^{\infty}(\R)$ be a  cut off
 satisfying supp $\hat{\rho} \cap Lsp(\Omega) = \{ r L_{\gamma}
 \}$. Then

\begin{equation} \label{NONEK}\begin{array}{l}
Tr (R_{\Omega,\rho}^D(\k)) \sim -Tr\; \int_{\mathbb R} \rho(\mu)
N_1'(k-\mu) q_{2r-1} (k-\mu)\, d\mu
 \\ \\
 \, \, \quad \qquad \qquad \sim \;\frac{1}{2}\;
\sum_{\pm} Tr\; \int_{\mathbb R}\rho (\mu)\,  N_1'(k-\mu) \\ \\
  \, \, \quad \qquad \qquad \times\, \overbrace{(N_1
(k-\mu) \Psi_{\Gamma_{\mp}})(N_1(k-\mu) \Psi_{\Gamma_{\pm}})
\cdots  (N_1 (k-\mu) \Psi_{\Gamma_{\mp}})(N_1(k-\mu)
\Psi_{\Gamma_{\pm}})}^{(2r-1)\; \text{times}} \, d\mu. \\
\end{array}
\end{equation}
\end{prop}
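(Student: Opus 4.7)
The plan is to start from Proposition \ref{ASY}(ii), which after discarding the exterior Neumann trace (negligible in $k$, see the Remark) and the free-space piece $R_{0,\rho}$ reads
\[ Tr_\Omega R_{\Omega,\rho}^D(k)\ \sim\ -\,Tr\!\int_\R \rho(\mu)\,N'(k-\mu)\,Q(k-\mu)\,\bigl(I+M(k-\mu)\bigr)^{-1}\,d\mu \]
modulo a controllable tail in the $M$-Neumann series. The task is then to enumerate which pieces of the expansion of $N'\,Q\,(I+M)^{-1}$ actually contribute at length $rL_\gamma$, and to read off the surviving one.

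First I would split each factor according to its canonical relation. Write $N' = N_0' + N_1'$, with $N_0'$ a $k$-pseudodifferential operator (carrying canonical relation $\operatorname{graph}(\beta^0)$) and $N_1'$ a semiclassical FIO associated with $\operatorname{graph}(\beta)$. Starting from the definition $Q = \tfrac{1}{2r}\sum_{i=1}^{2r} Q_i$ and applying Proposition \ref{NPsi} (which lets us replace each block $(I+N)\Psi_{\Gamma_\bullet}$ by $N_1\,\Psi_{\Gamma_\bullet}$ modulo negligible operators), one obtains the microlocal decomposition $Q \sim \sum_{j=0}^{2r} q_j$, where $q_j$ is a semiclassical FIO quantizing $\beta^j$; in particular $q_0\sim I$ microlocally near $\partial\gamma$, $q_{2r}=M$, and
\[ q_{2r-1}(k)\ \sim\ -\,\frac{1}{2r}\sum_{i=1}^{2r}\Psi_{\Gamma_{i+2r-1}}\,N_1(k)\,\Psi_{\Gamma_{i+2r-2}}\cdots N_1(k)\,\Psi_{\Gamma_i}. \]
Expanding $(I+M)^{-1}=\sum_{n\ge 0}(-1)^n M^n$, with $M^n$ associated to $\operatorname{graph}(\beta^{2rn})$, one then enumerates the triples $(a,j,n)$ with $a\in\{0,1\}$, $0\le j\le 2r$, $n\ge 0$ and $a+j+2rn=2r$: only $(0,2r,0)$, $(0,0,1)$ and $(1,2r-1,0)$ arise, giving $+N_0'M$, $-N_0'(q_0)M\sim -N_0'M$, and $+N_1'\,q_{2r-1}$ respectively. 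The first two cancel, leaving the first asymptotic equivalence of the proposition.

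The final step is to insert the explicit formula for $q_{2r-1}$ and use the fact that a bouncing-ball orbit has only two vertices: $\Gamma_i=\Gamma_+$ for $i$ odd and $\Gamma_i=\Gamma_-$ for $i$ even. Since the index shift $i\mapsto i+2r-1$ flips parity, among the $2r$ values of $i$ exactly $r$ produce alternating words beginning at the left with $\Psi_{\Gamma_-}$ and $r$ with $\Psi_{\Gamma_+}$; inside a trace, cyclicity makes each of these $r$ equivalent contributions collapse to a single term per orientation. Combining the overall sign $-\times(-1)^{2r-1}=+1$, the averaging weight $\tfrac{1}{2r}$, and the multiplicity $r$, one obtains the prefactor $\tfrac{1}{2}\sum_\pm$ and the displayed alternating product, with the outer $\Psi$ absorbed via idempotence of the microlocal cutoff near its support.

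\textbf{Main obstacle.} The delicate point is the enumeration in the middle step: one must rigorously justify that every other summand $N_a'\,q_j\,M^n$ with $a+j+2rn\neq 2r$ contributes only $O(|k|^{-\infty})$ after convolution with $\rho$. This is where wavefront-composition calculus (via Proposition \ref{HZ} and the explicit microsupports of the $\Psi_{\Gamma_i}$), the hypothesis that $\gamma$ has multiplicity one in $Lsp(\Omega)$ (so that $\hat\rho$ excises all other lengths), and the operator-norm bound of Lemma \ref{normofM} (handling the tail of the $(I+M)^{-1}$ series) must be combined with care.
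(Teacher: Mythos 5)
Your proposal is correct and follows essentially the same route as the paper: the paper likewise starts from Proposition \ref{ASY}(ii) (via Proposition \ref{SUM}), splits $N'=N_0'+N_1'$, observes that $Q$ decomposes into pieces $q_j$ quantizing $\beta^j$ with $q_{2r}=M$ while $(I+M)^{-1}$ contributes only powers $\beta^{2rn}$, identifies the same three candidate terms with the two $N_0'M$ terms cancelling, and then uses the two-vertex parity structure $\Gamma_i=\Gamma_\pm$ to produce the $\tfrac12\sum_\pm$ of alternating words. Your bookkeeping of the sign $(-1)^{2r-1}$ and the $\tfrac{1}{2r}\cdot r$ multiplicity matches the stated formula (the $r$ words of a fixed starting parity are in fact literally identical, so no appeal to cyclicity of the trace is even needed there).
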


We now express this trace as an explicit oscillatory integral. We
consider both principal (we will define the principal terms in the
next section) and non-principal terms. All terms arise as
composition of $2 r$ Fourier integral operators quantizing
$\beta$, hence may be expressed as compositions of $2r$
oscillatory integrals. We recall that $\Psi_{\Gamma_j} = (I +
N_0)^{-1} \chi_{\Gamma_j}$. Next we expand
$$(I + N_0(k))^{-1}  = I + N_{-1}(k), $$
where $N_{-1}(k)$ is a $(-1)$st order pseudo-differential
operator. If we plug $(I+N_{-1}(k))\chi_{\Gamma_{\pm}}$ for
$\Psi_{\Gamma_{\pm}}$ into the expression (\ref{NONEK}), after
expanding we get
\begin{cor} \label{SOCINT} Let $k = \lambda + i \tau \log \lambda$. Up to $O(|k|^{-\infty})$, the trace $Tr (R_{\Omega,\rho}^D(\k))$
is a sum of $2r$  oscillatory integrals of the form
$$ \begin{array}{l} I_{2r, \rho}^{\sigma} (k) =  \int_{\R} \int_{\R} \int_{(\partial \Omega)^{2r}}
e^{i [\mu(t-{\mathcal L}(y_1, \dots, y_{2r}))+k \mathcal L (y_1, \dots, y_{2r})]} \\ \\
A_{2r}^{\sigma}(k-\mu, y_1, \dots, y_{2r}) \hat{\rho}(t)\, dt\,
d\mu\, dS(y_1) \cdots d S(y_r),
\end{array} $$
where the superscript $\sigma$; $0\leq\sigma\leq 2r-1$, denotes
the sum of the terms which contain $\sigma$ factors of $N_{-1}$,
and where
$${\mathcal L}_{} (y_1, \dots, y_{2r}
) = |y_1 - y_2| + \cdots + |y_{2r} - y_1|,
$$ and $A_{2r}^{\sigma} (k- \mu , y_1, \dots, y_{2r})
\in S^{-|\sigma|  }_{\delta}((\partial \Omega) ^{2r})$.
\end{cor}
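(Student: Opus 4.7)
The plan is to take the trace formula of Proposition \ref{NFORM} and expand the operator product into explicit oscillatory integrals, grouped by the number of $N_{-1}$ factors inserted.

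First I would substitute the Neumann expansion $\Psi_{\Gamma_\pm} = (I+N_{-1})\chi_{\Gamma_\pm} = \chi_{\Gamma_\pm} + N_{-1}\chi_{\Gamma_\pm}$ into each of the $2r-1$ occurrences of $\Psi_{\Gamma_\pm}$ in \eqref{NONEK}. Distributing produces $2^{2r-1}$ summands, one for each subset $\Sigma\subset\{1,\dots,2r-1\}$ recording the positions at which the $N_{-1}\chi_{\Gamma_\pm}$ alternative was chosen. Grouping by $\sigma:=|\Sigma|\in\{0,1,\dots,2r-1\}$ assembles exactly the $2r$ pieces $I^\sigma_{2r,\rho}(k)$.

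Next I would read off the phase and amplitude. Each factor $N_1(k-\mu)$ (and the single $N_1'(k-\mu)$) has Schwartz kernel of the form \eqref{a1}, with phase $e^{i(k-\mu)|x-y|}$. Composing the $2r$ such factors cyclically on $\partial\Omega$ (the cyclic identification $y_{2r+1}=y_1$ coming from taking the trace) produces the total phase $e^{i(k-\mu)\mathcal L(y_1,\dots,y_{2r})}$. I would then introduce the Fourier representation $\rho(\mu) = (2\pi)^{-1}\int \hat\rho(t)\,e^{i\mu t}\,dt$, which rewrites the overall exponential as $e^{i[\mu(t-\mathcal L)+k\mathcal L]}$ and supplies the $t,\mu$ integrations in the statement. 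The amplitude $A^\sigma_{2r}$ is then the sum over the $\binom{2r-1}{\sigma}$ positional choices of the product of the $2r$ individual amplitudes; the $2r-\sigma$ factors from $N_1$ and $N_1'$ carry semiclassical symbols of order zero (via the expansion \eqref{AMP}, noting that $\partial_k$ only introduces a factor $i|x-y|$ that is smooth away from the diagonal and absorbed into the amplitude), the cutoffs $\chi_{\Gamma_\pm}$ are of order zero, and each of the $\sigma$ factors $N_{-1}$ is of order $-1$. Hence the total order is $-\sigma$, i.e.\ $A^\sigma_{2r}\in S^{-|\sigma|}_\delta$.

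The main obstacle I expect is the pseudo-FIO composition bookkeeping: both the cutoffs $\chi_{\Gamma_\pm}$ and the parametrix operator $N_{-1}$ are themselves oscillatory integrals, so composing them with the adjacent $N_1$ factors introduces auxiliary cotangent variables that must be eliminated by stationary phase before the kernel reduces to a single integral over $(\partial\Omega)^{2r}$. This reduction is clean because $\chi_{\Gamma_\pm}$ is microsupported near the billiard directions $(v_\pm,\eta_\pm)$ at the endpoints of $\gamma$, which coincide with the critical directions of the phase $(k-\mu)|x-y|$; verifying the nondegeneracy of the resulting stationary phase and checking that each composition contributes the expected factor to the amplitude order is the core, if essentially routine, computational task. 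The residual $O(|k|^{-\infty})$ absorbs the negligible terms already marked by the $\sim$ symbol in Proposition \ref{NFORM}.
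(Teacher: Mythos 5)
Your proposal is correct and follows essentially the same route as the paper: substitute $\Psi_{\Gamma_\pm}=(I+N_{-1})\chi_{\Gamma_\pm}$ into the trace formula of Proposition \ref{NFORM}, distribute and group the resulting terms by the number $\sigma$ of $N_{-1}$ factors, read off the cyclic phase $\mathcal L$ from the explicit kernels of $N_1$ and $N_1'$, and use the Fourier representation of $\rho(\mu)$ to produce the $(t,\mu)$ integrations and the stated phase $\mu(t-\mathcal L)+k\mathcal L$, with the symbol order $-\sigma$ coming from counting the order $-1$ factors. The paper itself compresses all of this into ``after expanding we get,'' deferring the pseudodifferential-FIO composition bookkeeping you flag to the analogous computations in the two-dimensional case, so your added detail is consistent with, not divergent from, its argument.
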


\section{Principal terms }
The goal of this section is to identify the  {\it principal
terms}, which generate the highest derivative data, and to prove
that non-principal terms contribute only lower order derivative
data.

As in \cite{Z3}, we separate out a single oscillatory integral
(the principal term $I^{0}_{2r,\rho}$)  which generates all terms
of the wave trace (or Balian-Bloch) expansion which contain the
maximal number of derivatives of the boundary defining function
per power of $k$ (i.e. order of wave invariant).

\begin{defn}\label{PRINCIPAL}  Let $\gamma$ be a $2$-link periodic
orbit, and let $\gamma^r$ be its $r$th iterate. The {\em principal
term} is the term of (\ref{NONEK}) in which $\Psi_{\Gamma_{\pm}}$
is replaced by $\chi_{\Gamma_{\pm}}$. Thus, the principal term is
$$ I^{0}_{2r,\rho}=-\; \sum _{\pm} Tr\; \rho \;*\;  N_1' (\;\overbrace{N_1 \chi_{\Gamma_{\mp}}\, N_1
\chi_{\Gamma_{\pm}} \cdots\, N_1 \chi_{\Gamma_{\mp}}\,N_1
\chi_{\Gamma_\pm}}^{(2r-1)\; times}\;) d\mu.
$$
\end{defn}

This oscillatory integral corresponds to $I^{0}_{2r,\rho}$, i.e.
the one in \ref{SOCINT} corresponding to $\sigma=0$. By Corollary
\ref{SOCINT}, the oscillatory integral $I^{0}_{2r,\rho}$ has the
phase function ${\mathcal L}_{} (y_1, \dots, y_{2r} )= |y_1 - y_2|
+ \cdots + |y_{2r} - y_1|, $ where $y_p \in
\partial \Omega$. We may write each $y_p $ in graph
coordinates as $(x'_p, f_{\pm}(x_p'))$. We will use superscripts
for the $n-1$ components of $x'_p$, i.e. $x'_p = (x_p^1, \dots,
x_p^{n-1})$. Hence the integral is localized to $[(-\epsilon,
\epsilon)^{n-1}]^{2r}$. We notice that $I^{0}_{2r,\rho}$ is the
sum of $I^{0,+}_{2r,\rho}$ and $I^{0,-}_{2r,\rho}$, where they
correspond to the $+$ and $-$ term respectively. It is clear that
the phase function of $I^{0,\pm}_{2r,\rho}$ is given by

$$ {\mathcal L}_{\pm} (x_1', \dots, x_{2r}')
 = \sum_{p = 1}^{2r} \sqrt{(x_{p + 1}' - x_p')^2 +
 (f_{\omega_{\pm}(p + 1) }(x_{p +1}') - f_{\omega_{\pm}(p)}(x_p'))^2}.$$
 Here, $w_{\pm}: \Z_{2r} \to \{\pm\}$, where $w_{+}(p)$
(resp. $w_-(p)$) alternates sign starting with
 $w_+(1) = +$ (resp. $w_-(1) = -$).
 \\

Now we have the following Theorem \ref{SETUPA}. First we have a
definition:
\begin{defn} \label{MODULO} Let $\gamma$ be an $m$-link periodic
reflecting ray, and let $\hat{\rho} \in C_0^{\infty}(\R)$ be a cut
off
 satisfying supp $\hat{\rho} \cap Lsp(\Omega) = \{ r L_{\gamma}
 \}$ for some fixed $r \in \N$. Given an oscillatory integral
 $I(k)$, we write
 $$Tr  R_{\Omega, \rho}^{B} (k) \equiv I(k)\;\;
 \mbox{mod} \;\; {\mathcal O}( \sum_{j} k^{-j} (\mathcal J ^{2j} f))$$
 if
 $$Tr R_{\Omega, \rho}^{B} (k) - I(k)$$
 has a complete asymptotic expansion of the form (\ref{PR}), and
 if the coefficient of $k^{-j}$ depends on $\leq 2j$
 derivatives of the defining functions $f$ at the reflection points.
\end{defn}

The following Theorem is the higher dimensional generalization of
Theorem 4.2 of \cite{Z3}.

\begin{theo}\label{SETUPA} Let $k=\lambda+i\tau \log \lambda$.  Let $\gamma$ be a primitive
non-degenerate $2$-link periodic
 reflecting ray, whose reflection points are points of non-zero curvature of $\partial \Omega$,
 and let $\hat{\rho} \in C_0^{\infty}(\R)$ be a cut off
 satisfying supp $\hat{\rho} \cap Lsp(\Omega) = \{ r L_{\gamma}
 \}$ and equals one near $r L_{\gamma}$ for some fixed $r \in \N$.  Orient $\Omega$ so that $\gamma$ is
 the vertical segment $\{x' = 0\} \cap \Omega$, and so that
  $\partial \Omega$ is a union of two graphs over $[-\epsilon, \epsilon]^{n-1}$. Then
  \\

\begin{enumerate}

\item $Tr R_{\Omega, \rho}^{B} (k) \equiv I_{2r,\rho}^{0}\;\;
 \mbox{mod} \;\; {\mathcal O}( \sum_{j} k^{-j} (j^{2j}
 f_{\pm}))$
 \\

\item We also have the following integral representation for
$I_{2r,\rho}^{0}$ in the $x_p'$ coordinates
\begin{equation}\label{EXPRESSIONA} I_{2r,\rho}^{0} =\!\!\!\;\;\;
\sum_{\pm} \int_{\left([-\epsilon, \epsilon]^{n-1}\right)^{2
r}}\!\!\!\!\;\;\; e^{i (k + i \tau) {\mathcal L}_{\pm}(  x_1',
\dots, x_{ 2r}')} a_{2r}^{pr,\pm}(k, x_1',  x_2', \dots, x_{2r}')
d x_1' \cdots dx_{2 r}',\\ [6pt] \hspace{-5pt}\end{equation} where
the phase $
 {\mathcal L}_{\pm}
  (x_1', \dots, x_{2r}')$
is given in (\ref{LPM}), and where the amplitude is given by:
 $$a_{2r}^{pr,\pm}(k, x_1', \dots, x_{2r}') =  {\mathcal L}_{w_{\pm}}(x_1', \dots, x_{2r}') A^{pr,\pm}_{2r}(k, x_1', \dots, x_{2r}')
+ \frac{1}{i}
 \frac{\partial}{\partial k}  A^{pr,\pm}_{2r} (k,  x_1',  \dots, x_{2r}'). $$
Here
\begin{equation}\label{AMPLSINGL}
\begin{array}{lll}
 A^{pr,\pm}_{2r}(k, x_1', \dots, x_{2r}') & = &     \Pi_{p = 1}^{2 r }\;
 \Big\{{a_1\Big(k\,\sqrt{(x_{p }' \!- \!x_{p\!+\!1}')^2 \!+\!
(f_{w_{\pm}(p)}(x_{p}') \!-\! f_{w_{\pm}(p \!+ \!1)}(x_{p\!+\!1}'
))^2}}\,\Big) \\ && \\ & \times &
 \ \frac{<\,x_{p}' -
x_{p+1}'\, , \, \nabla_{x_p'} f_{w_{\pm}(p)} (x_{p}')> -  (
f_{w_{\pm}(p)} (x_{p}') - f_{w_{\pm}(p+1)}(x_{p+1}') )}{
\sqrt{(x_{p}' - x_{p+1}')^2 + (f_{w_{\pm}(p)} (x_{p}') -
f_{w_{\pm}(p+1)'}(x_{p+1}') )^2}} \Big\}
\end{array} \end{equation}
 where $a_1$ is the Hankel amplitude in
(\ref{a1}). Here $w_{+}(p)=(-1)^{p+1}$ and $w_{-}(p)=-w_{+}(p)$.
Also we have identified $x_{2r + 1}' = x_1'$.
\end{enumerate}

\end{theo}
\begin{proof} To prove the first part of Theorem it is enough to show that for a given $\sigma \geq 1$, the coefficient of $k^{-j}$
in the stationary phase expansion of $I_{2r, \rho}^{\sigma} (k)$,
has only Taylor coefficients of order at most $2j-\sigma+1$. This is shown in
\S 5.4 of \cite {Z3}. The second part of Theorem follows from the
proof of Proposition 3.10 of \cite{Z3}. It is basically just
eliminating the variables $t$ and $\mu$ in the integral in
Corollary (\ref{SOCINT}) using the stationary phase lemma.

\end{proof} Theorem \ref{SETUPA} is a crucial ingredient in the
proof of Theorem \ref{ONESYM}. It gives explicit formula for the
phase and amplitude of the principal oscillatory integrals that
determine the highest order jet of $\Omega$ in each wave
invariant. The notation $A^{pr,\pm}_{2r}, a^{pr,\pm}_{2r}$ refers
to the amplitude of the principal terms of the $2r$th integral;
these amplitudes contain terms of all orders in $k$ and principal
here does not refer to the principal symbol, i.e. the leading
order term in the semi-classical expansion.

\section{\label{SPSECT} Stationary phase calculations of $I_{2r, \rho}^{0}$ and the wave invariants}

It is easy to see that (see Proposition $4.4$ of \cite{Z3}) we
have $I_{2r, \rho}^{0, +}=I_{2r, \rho}^{0, -}$ and therefore $I_{2r, \rho}^{0}=2I_{2r, \rho}^{0, +}$ . Hence it suffices
to consider the $+$ term. The oscillatory integrals $I_{2r,
\rho}^{0, +}$ have the form (\ref{EXPRESSIONA}) with the phase
${\mathcal L}_{+}$ and the amplitude $a_{2r}^{pr,+}$.

 The only critical point occurs when $x'_p = 0$ for all $p$.
   We denote by
Hess\,$\lcal_{\pm}(0)$ the $2r(n -1) \times 2r(n - 1)$ matrix with
components
\begin{equation} \text{Hess} \, \lcal_{\pm}(0) = \left(  \frac{\partial^2  \lcal_{\pm}}{\partial
x_{p}^{i}
\partial x_{q}^{j}}\right), \;\;  i, j = 1, \dots, n-1; \; p,q = 1, \dots, 2r. \end{equation}
It is  the Hessian of ${\mathcal L}_{\pm}$ at its critical point
$(x_1', \dots, x_{2r}') = 0$ in Cartesian coordinates.

We denote by ${\mathcal H}_{+}$  the inverse Hessian operator in
the variables $( x_1', \dots, x_{2r}')$ at this critical point.
That is ${\mathcal H}_{+} = \langle Hess({\mathcal L}_{+})^{-1}(0)
D, D \rangle,$ where $D$ is short for $(\frac{\partial}{\partial
x_{1}^1}, \cdots \frac{\partial}{\partial x_{2r}^{n-1}}).$ More
precisely,
\begin{equation} {\mathcal H}_{+} = \sum_{p, q = 1}^{2r} \sum_{i, j =
1}^{n-1} h^{(i,j), (p,q)} (\frac{\partial^2}{\partial
x_{p}^{i}
\partial x_{q}^{j}}). \end{equation}

Before we apply the Stationary Phase Lemma, in two subsections we
state some properties of the inverse Hessian matrix of $\mathcal L
_{+}$, and also some properties of the phase function $\mathcal L
_{+}$ and principal amplitude $a_{2r}^{pr,+}$ which may be derived
directly from the formula in Theorem \ref{SETUPA}.

\subsection{{Properties of Hess $({\mathcal
L}_{+})^{-1}$}}

Let $\{\nu_{j,\pm}\}_{j = 1}^{n-1}$ denote the eigenvalues of the
second fundamental form of $\partial \Omega$ at the endpoints of
the bouncing ball orbit. Without loss of generality we can assume

\begin{equation}\label{nu_j}
\nu_{j,\pm}= D^2_{{x^{j}}} f_{\pm}(0), \qquad j=1,..., n-1.
\end{equation}

This is because by an orthogonal change of variable (i.e. an
isometry of the plane) we can make Hess $f_{\pm}$ a diagonal
matrix. Of course when all the symmetry assumptions are satisfied,
then Hess $f_{\pm}$ is automatically diagonal.


The following generalizes Proposition 2.2 of \cite{Z3}.

\begin{prop} \label{KT}
Put  $ a_{j,\pm} = -2(1\pm L \nu_{j,\pm} ),$ and let $A_{\pm} =
Diag(a_{j,\pm})$ be the $(n-1) \times (n-1)$  diagonal matrix with
the diagonal entries $a_{j,\pm}$  Then the Hessian $H_{2 r}$ of
${\mathcal L}_+$ at $x' = 0$ has the form:
$$ \label{HBB} H_{2r} = \frac{-1}{L} \left\{ \begin{array}{lllll} A_+  & I & 0 & \dots & I  \\ & & & & \\
  I &  A_-  & I & \dots & 0 \\ & & & & \\ 0 & I &   A_+ & I & 0  \\ & & & &
   \\ \dots & \dots & \dots & \dots & \dots \\ & & & & \\
I & 0 & 0 & \dots &  A_-\end{array} \right\},$$ where there are $2
r \times 2r$ blocks and each block is of size $(n-1 )\times
(n-1)$.
\end{prop}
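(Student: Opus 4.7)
The plan is to carry out a direct computation of the Hessian at $x' = 0$, exploiting the nearest-neighbor coupling in $\lcal_+$. Decompose
\[
\lcal_+ = \sum_{p=1}^{2r} \ell_p, \qquad \ell_p(x_p', x_{p+1}') = \sqrt{|x_{p+1}' - x_p'|^2 + \bigl(f_{w_+(p+1)}(x_{p+1}') - f_{w_+(p)}(x_p')\bigr)^2}.
\]
Since $\ell_p$ only involves the consecutive pair $(x_p', x_{p+1}')$, we immediately have $\partial^2_{x_p^i x_q^j} \lcal_+(0) = 0$ whenever $q - p \not\in \{-1, 0, 1\} \pmod{2r}$. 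This alone forces $H_{2r}$ to be cyclic block tridiagonal of the shape displayed, with zero blocks in the correct positions (including the corner entries linking $x_{2r}'$ and $x_1'$ via $\ell_{2r}$). It remains to identify the diagonal and nearest-neighbor blocks.

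To that end, I would first treat the generic model $\ell(x,y) = \sqrt{|x-y|^2 + (f(x) - g(y))^2}$ at $x = y = 0$ under the hypotheses $f(0) - g(0) = c$ (with $|c| = L$) and $\nabla f(0) = \nabla g(0) = 0$. The second hypothesis is exactly the geometric statement that the bouncing ball orbit meets $\partial \Omega$ orthogonally, equivalently that $0$ is a critical point of $\lcal_+$. Since the numerators of the first derivatives of $\ell$ vanish at the origin, only the derivative of those numerators survives, yielding
\[
\partial^2_{x^i x^j} \ell\big|_0 = \frac{\delta_{ij} + c\,\partial^2_{ij} f(0)}{|c|}, \qquad \partial^2_{x^i y^j} \ell\big|_0 = -\frac{\delta_{ij}}{|c|}.
\]

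For each vertex $p$, the variable $x_p'$ enters only in $\ell_{p-1}$ and $\ell_p$, so each diagonal block of $H_{2r}$ is a sum of two contributions from the generic model. At a top vertex ($w_+(p) = +$) one gets $c = +L$ in $\ell_p$ with inner function $f = f_+$, and $c = -L$ in $\ell_{p-1}$ with inner function $g = f_+$; the $-$ sign carried by the $\partial^2 g$ term cancels against the flip in $c$, so both contributions equal $\tfrac{1 + L\nu_{j,+}}{L}\delta_{ij}$ after invoking the diagonalization $\partial^2_{ij} f_\pm(0) = \nu_{j,\pm}\delta_{ij}$ from (\ref{nu_j}). The sum is $\tfrac{2(1 + L\nu_{j,+})}{L}\delta_{ij} = -\tfrac{1}{L} a_{j,+} \delta_{ij}$, which is the diagonal block $-\tfrac{1}{L} A_+$. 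The analogous computation at a bottom vertex yields $-\tfrac{1}{L} A_-$, and both cases appear alternately because the $w_+(p)$ pattern alternates. The mixed blocks $\partial^2_{x_p^i x_{p+1}^j}\lcal_+(0)$ come only from $\ell_p$ and equal $-\tfrac{\delta_{ij}}{L}$, giving the off-diagonal $-\tfrac{1}{L} I$ blocks (including the corner entries, produced identically by $\ell_{2r}$).

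There is no genuine obstacle here, only careful bookkeeping: the delicate point is tracking the alternating sign of $c = f_{w_+(p)}(0) - f_{w_+(p+1)}(0)$ alongside which of $f$ or $g$ plays the role of the differentiated factor, and checking that these two sign flips conspire to produce the correct coefficient $+L\nu_{j,+}$ at top vertices and $-L\nu_{j,-}$ at bottom vertices to match the definition $a_{j,\pm} = -2(1 \pm L\nu_{j,\pm})$.
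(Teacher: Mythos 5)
Your proof is correct and follows essentially the same route as the paper's: a direct second-differentiation of the length functional at the critical point, using the nearest-neighbor coupling to get the cyclic block-tridiagonal structure and the orthogonality of $\gamma$ to $\partial\Omega$ (i.e.\ $\nabla f_{\pm}(0)=0$) to kill all terms except $\frac{1}{L}(\delta_{ij}+c\,\partial^2_{ij}f)$; your sign bookkeeping at top and bottom vertices reproduces the paper's formula $D^2_{x_p'x_p'}\mathcal{L}_+(0)=\frac{2}{L}\bigl(I+w_+(p)\,L\,\mathrm{Hess}\,f_{w_+(p)}(0)\bigr)=-\frac{1}{L}A_{w_+(p)}$ exactly. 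The only difference is organizational: you isolate a single-link "generic model," whereas the paper differentiates the displayed gradient of the full $\mathcal{L}_\pm$ directly.
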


\begin{proof} There are $2r$ sets of variables $x'_p$ and therefore there are $2r \times 2r$ blocks
 and the $(p,q)$-th block
is given by $D^2_{x_p' x_{q}'}\mathcal L_+(0)$. We have:
\begin{equation} \label{crit} \begin{array}{l} \nabla_{x_p'} {\mathcal L}_{\pm} =
 \frac{(x_p' - x_{p + 1}') + (f_{w_{\pm}(p)}(x_p')-f_{w_{\pm}(p +
 1)}
 (x_{p + 1}')) \nabla_{x_p'} f_{w_{\pm}(p)}(x_p')}{\sqrt{(x_p' - x_{p + 1}')^2 +
(f_{w_{\pm}(p)}(x_p') - f_{w_{\pm}(p + 1)}(x_{p + 1}'))^2}} -
\frac{(x_{p - 1}' - x_{p }') + (f_{w_{\pm} (p - 1)} (x_{p - 1}') -
f_{w_{\pm}(p)}(x_{p}')) \nabla _{x_p'}f_{w_{\pm}
(p)}(x_p')}{\sqrt{(x_p' - x_{p - 1}')^2 + (f_{w_{\pm}(p)}(x_p') -
f_{w_{\pm}(p - 1)}(x_{p - 1}'))^2}}.
\end{array} \end{equation} A simple
calculation (using $(\ref{crit})$ ) shows that all the blocks
$D^2_{x_p' x_{q}'}\mathcal L_+(0)$ are zero except the ones with
$p=q, p=q+1$ and $q=p+1$. From $(\ref{crit})$ we obtain
$$ \left\{ \begin{array}{l}  D^2_{x_p'x_p'} {\mathcal L}_+(0) =
 2\big( \frac{1}{L}\,I + w_+(p) \text{Hess}\,f_{w_+(p)}(0)\big)=\frac{-1}{L}\,A_{w_+(p)},\;\;\;\; \\ \\
D^2_{x_p' x_{p+1}'}  {\mathcal L}_+ (0) =
  \frac{-1}{L}\, \,I. \end{array}\right.$$
\end{proof}

 In the elliptic case, $\det H_{2r}$ is a polynomial in $\cos \al_j/2$\,
 (in $\cosh \al_j/2$ in the hyperbolic case)\, of degree
 $2r(n-1)$. Here, in the elliptic case, $\{e^{\pm i \al_1},...e^{\pm i \al_{n-1}}\}$ are the
eigenvalues of the Poincare map $ P_{\gamma}$.
\begin{prop}  \label{DetHESS}We have
$$\det H_{2r} = L^{2r(1-n)} \prod_{j=1}^{n-1}(2 - 2  \cos r \al_j). $$ \end{prop}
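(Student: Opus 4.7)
The plan is to exploit the diagonality of the matrices $A_\pm$ in order to decouple $H_{2r}$ into $n-1$ independent blocks of the two-dimensional bouncing-ball Hessian, and then to invoke the planar computation from \cite{Z3}.

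First I would observe that since, by (\ref{nu_j}), the blocks $A_+ = \operatorname{Diag}(a_{j,+})$ and $A_- = \operatorname{Diag}(a_{j,-})$ are simultaneously diagonal, the matrix $-L H_{2r}$ viewed with row/column indices $(p,i)$ (with $1 \le p \le 2r$, $1 \le i \le n-1$) has no nonzero entries coupling different values of $i$: the only nonzero entries $H_{(p,i),(q,j)}$ force $i = j$. Permuting rows and columns so as to group indices first by $i$ and then by $p$ transforms $-L H_{2r}$ into a block-diagonal matrix $\bigoplus_{j=1}^{n-1} M_j$, where $M_j$ is the $2r \times 2r$ periodic tridiagonal matrix with diagonal entries alternating $a_{j,+}, a_{j,-}, a_{j,+}, \dots$ and with $1$s on the sub- and super-diagonals, including the two corners. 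Consequently
$$\det H_{2r} = \Bigl(-\tfrac{1}{L}\Bigr)^{2r(n-1)} \prod_{j=1}^{n-1} \det M_j = L^{2r(1-n)} \prod_{j=1}^{n-1} \det M_j,$$
and the problem is reduced to showing $\det M_j = 2 - 2\cos r\alpha_j$ for each $j$.

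For fixed $j$, the matrix $M_j$ is precisely the Hessian that arises in the \emph{two-dimensional} problem: the $r$-th iterate of a bouncing ball orbit of length $2L$ between two plane curves with curvatures $\nu_{j,+}$ and $\nu_{j,-}$ at the reflection points. Its determinant is computed in Proposition 2.2 of \cite{Z3} and equals $2 - 2 \cos r\alpha_j$, where $e^{\pm i \alpha_j}$ are the eigenvalues of the associated 2D Poincar\'e map $P_j$. Conceptually, this reflects the classical identity that the Hessian of the length functional at a non-degenerate closed billiard trajectory is (up to a nonzero factor in $L$) equal to $\det(I - P^r)$; since $P_j \in SL(2, \R)$, one has $\det(I - P_j^r) = 2 - \operatorname{tr}(P_j^r) = 2 - 2\cos r\alpha_j$.

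To conclude I would check that the $\alpha_j$ produced this way are exactly the $\alpha_j$ featured in the full Poincar\'e map $P_\gamma$: because the second fundamental forms of $\partial\Omega$ at the two endpoints of $\gamma$ are simultaneously diagonalized in our coordinates, $P_\gamma$ splits as the symplectic direct sum $\bigoplus_{j=1}^{n-1} P_j$, and its eigenvalue pairs are precisely $\{e^{\pm i \alpha_j}\}$. The principal obstacle, were one to avoid invoking \cite{Z3}, would be the direct computation of $\det M_j$; this can be done by a transfer-matrix argument, writing the recurrence $v_{p-1} + a_{j,\pm} v_p + v_{p+1} = 0$ and identifying the monodromy of $2r$ transfer matrices with $P_j^r$, so that the twisted periodic boundary conditions produce the factor $\det(I - P_j^r)$. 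All other steps (the decoupling and the assembly) are essentially bookkeeping.
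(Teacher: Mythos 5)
Your proposal is correct and follows essentially the same route as the paper: both exploit the diagonality of $A_\pm$ to decouple $H_{2r}$ (after a permutation of indices) into $n-1$ planar $2r\times 2r$ periodic tridiagonal Hessians, and then evaluate each planar determinant via the Kozlov--Treshchev identity $\det(I-P_{\gamma^r}) = \det(H_{2r})\,(\prod_p H_{p,p+1})^{-1} = L^{2r}\det H_{2r}$. The one refinement worth noting is that you explicitly justify why the $\alpha_j$ arising from the planar blocks coincide with the eigenvalue phases of the full Poincar\'e map $P_\gamma$ (via its symplectic splitting $\bigoplus_j P_j$ under simultaneous diagonalization of the second fundamental forms), a point the paper's proof leaves implicit.
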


We will use Proposition $\ref{Det(I-P)}$ in the following
subsection to prove Proposition $\ref{DetHESS}$.

\subsubsection{Poincar\'e map and Hessian of the length functional}
The linear Poincar\'e map $P_{\gamma}$ of $\gamma$ is the
derivative at $\gamma(0)$ of the first return map to a transversal
to $\Phi^t$ at $\gamma(0).$ By a non-degenerate periodic
reflecting ray $\gamma$, we mean one whose linear Poincar\'e map
$P_{\gamma}$ has no eigenvalue equal to one. For the definitions
and background, we refer to \cite{PS, KT}.

There is an important relation between the spectrum of the
Poincar\'e map $P_{\gamma}$ of a periodic $m$-link reflecting ray
and the Hessian $H_m$ of the length functional at the
corresponding critical point of $L: (\partial \Omega)^m \to \R.$ For the
following, see \cite{KT} (Theorem 3).

\begin{prop} \label{Det(I-P)} Let $\gamma$ be a periodic $m$-link reflecting ray in plane domain $\Omega$.
Then we have: $$\det (I - P_{\gamma}) = \det (H_m) (\prod_{p=1}^m {H_{p,p+1}})^{-1},$$ where $H_{p,p+1}$ is the $(p,p+1)$-th entry of $H_m$.
\end{prop}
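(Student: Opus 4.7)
The plan is to identify $P_\gamma$ as the monodromy of the variational (Jacobi) equation associated to the critical point of $L$, and then to evaluate $\det(I-P_\gamma)$ and $\det H_m$ in parallel via a transfer matrix calculation. First I would recall that the Euclidean distance $d(y,y')=|y-y'|$ is a generating function for the billiard map $\beta$: if $\beta(y_p,\eta_p)=(y_{p+1},\eta_{p+1})$, then $\eta_p=-\partial_{y_p}d(y_p,y_{p+1})$ and $\eta_{p+1}=\partial_{y_{p+1}}d(y_p,y_{p+1})$. Differentiating these two relations at the closed trajectory and solving for $(\delta y_{p+1},\delta\eta_{p+1})$ in terms of $(\delta y_p,\delta\eta_p)$ expresses the single-link linearization $d\beta|_p$ entirely in terms of the partial Hessians $D^2_{y_py_p}d,\ D^2_{y_py_{p+1}}d,\ D^2_{y_{p+1}y_{p+1}}d$, which are precisely the constituents of $H_{pp}$ and $H_{p,p+1}$ in $H_m$.

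Second, I would observe that eigenvectors of $P_\gamma=d\beta|_m\circ\cdots\circ d\beta|_1$ with eigenvalue $1$ correspond to periodic Jacobi fields $(\delta y_1,\dots,\delta y_m)$ satisfying the linearization of $\nabla L=0$, namely
$$H_{p,p-1}\,\delta y_{p-1}+H_{pp}\,\delta y_p+H_{p,p+1}\,\delta y_{p+1}=0,\qquad 1\le p\le m,$$
cyclically. This is precisely $H_m\,\delta y=0$, so $\ker(I-P_\gamma)$ and $\ker H_m$ have equal dimension and the two determinants vanish simultaneously.

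To upgrade this to the quantitative identity, I would rewrite the cyclic tridiagonal system as a first-order recursion $(\delta y_{p+1},\delta y_p)=T_p(\delta y_p,\delta y_{p-1})$, where the transfer matrix $T_p$ has determinant $H_{p-1,p}/H_{p,p+1}$. The product $T_m\cdots T_1$ is conjugate — via the unimodular, symplectic change of basis from $(\delta y_p,\delta y_{p-1})$ to $(\delta y_p,\delta\eta_p)$ provided by the generating function relations above — to $P_\gamma$. A standard Hill-type expansion of the determinant of a cyclic tridiagonal matrix then gives $\det H_m=\bigl(\prod_p H_{p,p+1}\bigr)\det(I-T_m\cdots T_1)$, and the conjugacy above converts the right-hand determinant into $\det(I-P_\gamma)$.

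The main obstacle will be the last algebraic step: verifying the Hill-type determinant identity for cyclic tridiagonal matrices and carefully tracking signs and the unimodularity of the change of variables from transfer-matrix coordinates to symplectic position–momentum coordinates, so that the sign of $\det(I-P_\gamma)$ matches $\det H_m/\prod_p H_{p,p+1}$. In the planar setting each $H_{p,p+1}$ is a scalar and this reduces to a classical computation, but the orientation conventions inherited from the inward-normal reflection law need to be checked to confirm the formula as stated, with no extra $(-1)^m$ factor.
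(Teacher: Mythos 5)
The paper does not actually prove Proposition \ref{Det(I-P)}: it cites it as Theorem 3 of Kozlov--Treshchev \cite{KT} and only remarks that the planar case suffices for its purposes. So there is no in-paper argument to compare against; what you have written is a reconstruction of the standard proof of this classical fact, and in outline it is correct. The generating-function linearization of $d\beta$, the identification of $\ker H_m$ with periodic Jacobi fields, the transfer-matrix reformulation with $\det T_p = H_{p-1,p}/H_{p,p+1}$ (whose product telescopes to $1$, consistent with $\det P_\gamma=1$), and the Hill-type identity $\det H_m = \bigl(\prod_p H_{p,p+1}\bigr)\det\bigl(I - T_m\cdots T_1\bigr)$ all check out; I verified the sign convention of the last identity on explicit cyclic tridiagonal examples, and it matches the normalization used in the paper's application (equation (\ref{HESSPOIN})), where $H_{p,p+1}=-1/L$ gives $\det(I-P_{\gamma^r}) = L^{2r}\det H_{2r}$.

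Two small corrections. First, the change of basis $S_p:(\delta y_p,\delta y_{p-1})\mapsto(\delta y_p,\delta\eta_p)$ is \emph{not} unimodular: from $\delta\eta_p = d_{y_p y_{p-1}}\,\delta y_{p-1} + d_{y_p y_p}\,\delta y_p$ one gets $\det S_p = H_{p-1,p}$. This is harmless, because $d\beta_p = S_{p+1}T_pS_p^{-1}$ and periodicity gives $S_{m+1}=S_1$, so $P_\gamma = S_1(T_m\cdots T_1)S_1^{-1}$ and $\det(I-P_\gamma)=\det(I-T_m\cdots T_1)$ by conjugation invariance alone; you should drop the appeal to unimodularity rather than try to justify it. Second, the case $m=2$ (a primitive bouncing ball orbit) is genuinely degenerate: there the $(p,p+1)$ and $(p,p-1)$ entries of the cyclic matrix coalesce into a single entry of $H_2$ equal to the sum of the two link contributions, the three-term recursion must be set up keeping the forward and backward links separate, and the formula as literally stated requires a separate check. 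Since the proposition is asserted for all $m$, your write-up should either treat $m=2$ separately or restrict the Hill-type computation to $m\ge 3$ and handle the bouncing ball case by the direct $2\times 2$ computation.
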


Proposition $\ref{Det(I-P)}$ is stated only for the plane domains.
One can probably prove it for higher dimensions, but the formulae
for the plane domains is enough for us to prove
Prop.$\ref{DetHESS}$
\\

\emph{Proof of Prop.$\ref{DetHESS}$.}\, Let us first assume $n=2$.
Let $\lambda_{r}, \lambda_{r}^{-1}$ be the eigenvalues of
$P_{\gamma^r}$, so that $\det (I - P_{\gamma^r}) = 2 - (\lambda_r
+ \lambda^{-1}_{r}).$ Since in our case $H_{p,p+1}=-1/L$, from Prop.$\ref{Det(I-P)}$ it follows that
\begin{equation} \label{HESSPOIN} \det (I - P_{\gamma^r}) = L^{2r} \det H_{2r};\;\;\; (\gamma \;\; 2-\mbox{link}.) \end{equation}
\\

This is because if the eigenvalues of $P_{\gamma}$ are $\{e^{\pm i \alpha}\}$
(say, in the elliptic case) then those of $P_{\gamma^r}$ are
$\{e^{\pm i r \alpha}\}$, hence the left side of (\ref{HESSPOIN})
equals $2- 2 \cos r \alpha.$ Now assume $n\geq 2$ and assume
$\{e^{\pm i \alpha_1},... e^{\pm i \alpha_{n-1}}\}$ are the
eigenvalues of $P_{\gamma}$. We just showed that if we define

$$ \label{HBBj} H_{j,2r} = \frac{-1}{L} \left\{ \begin{array}{lllll} a_{j,+}  & 1 & 0 & \dots & 1  \\ & & & & \\
  1&  a_{j,-}  & 1 & \dots & 0 \\ & & & & \\ 0 & 1 &  a_{j,+} & 1 & 0  \\ & & & &
   \\ \dots & \dots & \dots & \dots & \dots \\ & & & & \\
1 & 0 & 0 & \dots &  a_{j,-}\end{array} \right\}_{2r\times 2r},$$

then $\text{det}H_{j,2r}=L^{-2r}(2-2\cos(r\al_j))$. Now we notice
because all the blocks of the matrix $H_{2r}$ are diagonal
matrices, therefore they commute and we can write

$$ \text{det}H_{2r}=\text{det} \big(\text{Diag}(\text{det}H_{1,2r}, \dots, \text{det}H_{n-1,2r})\big)
=L^{2r(1-n)} \prod_{j=1}^{n-1}(2 - 2  \cos r \al_j).$$
We now consider the inverse Hessian ${\mathcal H}_+ =
H_{2r}^{-1}$, which will be important in the calculation of wave
invariants. We denote its matrix elements by $h^{ij,pq}_+$ which
corresponds to the $(i,j)$-th entry of the $(p,q)$-th block of the
matrix ${\mathcal H}_+$ . We also denote by ${\mathcal H}_-$ the
matrix in which the roles of $A_+, A_-$ are interchanged; it is
the Hessian of ${\mathcal L}_-$. We also notice since $H_{2r}$ is
a block matrix with each block a diagonal matrix so is its inverse
${\mathcal H}_+$. Hence the only non-zero entries of the inverse
Hessian ${\mathcal H}_+$  are of the form $h^{ii,pq}_+$.

\begin{prop} \label{HESSPAR}  The diagonal matrix elements $h^{ii,pp}_{+}$ are
constant when the parity of $p$ is fixed, and for every $1\leq
i\leq n-1$ we have:
$$\begin{array}{llll} p \;\; \mbox{odd} \;\; \implies & h^{ii,pp}_{\pm} = h^{ii,11}_{\pm},\;\;\; &
 p \;\; \mbox{even} \;\; \implies & h^{ii,pp}_{\pm} =
 h^{ii,22}_{\pm}
 \\ & & & \\
h^{ii,11}_+ = h^{ii,22}_-, & h^{ii,22}_+ = h^{ii,11}_- & & .
\end{array}$$
\end{prop}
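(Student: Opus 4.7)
The plan is to reduce the block inversion to $n-1$ parallel scalar inversions and then exploit two obvious cyclic symmetries of the resulting cyclic matrix. First I would observe that, by Proposition \ref{KT}, every $(n-1)\times(n-1)$ block of $H_{2r}$ is diagonal in the chosen coordinates --- this relies on (\ref{nu_j}), which puts the second fundamental form in diagonal form, and is automatic under the $(\mathbb{Z}/2\mathbb{Z})^n$-symmetry. Diagonal blocks commute, so $H_{2r}$ is simultaneously block-diagonalisable in the $i$-index, and the $(i,i)$-entry of the $(p,q)$-block of ${\mathcal H}_+ = H_{2r}^{-1}$ equals the $(p,q)$-entry of the inverse of the scalar $2r\times 2r$ cyclic matrix $H_{i,2r}$ already introduced in the proof of Proposition \ref{DetHESS}. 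In particular this confirms that $h^{ij,pq}_{+} = 0$ for $i \neq j$ and reduces the proposition to symmetry properties of each $H_{i,2r}^{-1}$.

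Next I would use the $\mathbb{Z}/r\mathbb{Z}$-symmetry of $H_{i,2r}$. Let $P$ be the permutation matrix of the cyclic shift $p \mapsto p+2 \pmod{2r}$. Because the diagonal of $H_{i,2r}$ alternates $a_{i,+}, a_{i,-}, a_{i,+}, \dots$ and the off-diagonal entries form a translation-invariant cyclic tridiagonal pattern with entry $-1/L$, one has $P\,H_{i,2r}\,P^{-1} = H_{i,2r}$ and hence $P\,H_{i,2r}^{-1}\,P^{-1} = H_{i,2r}^{-1}$. Reading this off on the diagonal, $(H_{i,2r}^{-1})_{p,p} = (H_{i,2r}^{-1})_{p+2,p+2}$, which yields the first assertion: $h^{ii,pp}_{+} = h^{ii,11}_{+}$ for $p$ odd and $h^{ii,pp}_{+} = h^{ii,22}_{+}$ for $p$ even. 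The same argument applies verbatim to ${\mathcal H}_-$.

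For the remaining identities I would relate $+$ to $-$ by the single-step shift $\tau: p \mapsto p+1 \pmod{2r}$. Conjugation by the associated permutation matrix $T$ converts the alternating diagonal $a_{i,+}, a_{i,-}, a_{i,+}, \dots$ into $a_{i,-}, a_{i,+}, a_{i,-}, \dots$ while preserving the cyclic tridiagonal off-diagonal pattern; hence $T\,H_{i,2r}^{(+)}\,T^{-1} = H_{i,2r}^{(-)}$ and therefore $T\,{\mathcal H}_+\,T^{-1} = {\mathcal H}_-$ block by block. Consequently $h^{ii,pp}_{-} = h^{ii,(p+1)(p+1)}_{+}$; specialising to $p=1$ and $p=2$ and combining with the previous paragraph gives $h^{ii,11}_{+} = h^{ii,22}_{-}$ and $h^{ii,22}_{+} = h^{ii,11}_{-}$.

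The argument is pure symmetry bookkeeping and I foresee no genuine obstacle. The only point that warrants care is the reduction to scalar cyclic matrices, which depends on the simultaneous diagonalisability of all blocks of $H_{2r}$; once that is in place, everything follows from the $\tau$ and $\tau^2$ invariances of a single cyclic tridiagonal matrix.
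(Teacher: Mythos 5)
Your proposal is correct and follows essentially the same route as the paper: reduce to the scalar $2r\times 2r$ matrices $H_{i,2r}$ using that all blocks of $H_{2r}$ are diagonal, then exploit the cyclic shift. The only cosmetic difference is that the paper derives the parity-constancy as a corollary of the single-step shift relation $P\,{\mathcal H}_+\,P^{-1}={\mathcal H}_-$ (applied twice), whereas you verify the two-step invariance directly; these are equivalent.
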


\begin{proof}

It is enough to show this for $n=2$. This is because $H_{2r}$ is a
block matrix with commuting blocks and each block is diagonal. In
fact based on our definition in $(\ref{HBBj})$ we have

\begin{equation} \label{BIGHESSIAN} h^{ii,pq}=\big(H_{2r}\big)^{ii,pq}=\big(H_{i,2r}\big)^{pq}.\end{equation}
Hence it is enough to prove the proposition for the entries of the
inverse of the $2r\times 2r$ matrix $H_{i,2r}$. This reduces the
problem to $n-1=1$.

Now let us introduce the cyclic shift operator on $\R^{2r}$ given
by $P e_j = e_{j + 1}$, where $\{e_j\}$ is the standard basis, and
where $P e_{2r} = e_1.$ It is then easy to check that $ P
{\mathcal H}_+ P^{-1} = {\mathcal H}_-,$ hence that $ P {\mathcal
H}_+^{-1} P^{-1} = {\mathcal H}_-^{-1}.$ Since $P$ is unitary,
this says
$$h^{pq}_- = \langle {\mathcal H}_- e_p, e_q \rangle =  \langle  P {\mathcal H}_+^{-1} P^{-1} e_p, e_q \rangle =
 \langle   {\mathcal H}_+^{-1} P^{-1} e_p, P^{-1} e_q \rangle = h_+^{p -1, q - 1}.$$
 It follows that the matrix ${\mathcal H}_{\pm}$ is
invariant under even powers of the shift operator, which shifts
the indices $p\to p + 2 k$ ($k = 1, \dots, r$). Hence, diagonal
matrix elements of like parity are equal.

\end{proof}

\subsubsection{Inverse Hessian at $({\Z}/{{2\Z}})^n$-symmetric bouncing ball orbits}

We first observe that in the case of $({\Z}/{{2\Z}})^n$
symmetric domains, the $(2r)  (n-1) \times (2r) (n-1)$ Hessian of
Proposition \ref{KT} simplifies to:
\begin{equation} \label{HBB2} H_{2r} = \frac{-1}{L} \left\{ \begin{array}{lllll} A & I & 0 & \dots & I  \\ & & & & \\
  I & A & I & \dots & 0 \\ & & & & \\ 0 & I &  A & I & 0  \\ & & & & \\ 0 & 0 & I & A& I \dots
 \\ & & & & \\\dots & \dots & \dots & \dots & \dots \\ & & & & \\
I & 0 & 0 & \dots &  A\end{array} \right\} \end{equation} which is
a  $2 r\, \times\, 2r$ block matrix, each block of size
$(n-1)\times (n-1)$. Here $A = Diag(a_j)$ and $I$ is the rank
$n-1$ identity matrix. The diagonal entries $a_j$ are given by
\begin{equation} \label{FLOQUET}a_j =2\cos \alpha_j/2 = -2 (1 + L \nu_j) \;\;(\mbox{elliptic
case}),\;\;\;a_j=2\cosh \alpha_j/2 = -2 (1 + L \nu_j)
\;\;(\mbox{hyperbolic case}). \end{equation}

We can express the inverse Hessian matrix elements
$h_{2r}^{ij,pq}$ in terms of Chebychev polynomials $T_m,$ resp.
$U_m$,  of the first, resp. second, kind. The Chebychev
polynomials are defined by:
$$T_m(\cos \theta) = \cos m \theta,\;\;\;\;\; U_m(\cos \theta) = \frac{\sin (m + 1) \theta}{\sin \theta}.$$

\begin{prop}\cite{Z3}
Suppose that $H_{2r}$ is given by (\ref{HBB2}). Then the matrix elements of $H^{-1}_{2r}$
are given by
$$\begin{array}{ll} h^{ii,pq}_{2r} = \frac{1}{2[ 1 - T_{2r}(- a_i/2)]} [U_{2r - q + p -1}(-a_i/2) +
U_{q - p - 1}(-a_i/2)],\;\;\;\; (1 \leq p \leq q \leq 2r; \;\, 1\leq i\leq n-1) \\ \\
h^{ij,pq}_{2r}=0, \qquad i \neq j. \end{array}$$ \end{prop}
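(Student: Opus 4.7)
The structure of $H_{2r}$ dictates a two-step strategy: first decouple the problem into $n-1$ independent scalar problems, then verify the explicit formula by exploiting the three-term recurrence of Chebyshev polynomials.

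\emph{Step 1: Decoupling.} Each $(n-1)\times(n-1)$ block of $H_{2r}$ is either the diagonal matrix $A=\operatorname{Diag}(a_j)$ (on the block-diagonal) or a multiple of the identity (on the cyclic sub/super-diagonals). Consequently, the permutation of basis vectors that regroups all coordinates with a fixed index $i\in\{1,\dots,n-1\}$ puts $H_{2r}$ into block-diagonal form, with $n-1$ summands $H_{i,2r}$, each the $2r\times 2r$ symmetric cyclic tridiagonal matrix introduced in the proof of Proposition~\ref{DetHESS}. This yields the off-diagonal vanishing $h^{ij,pq}_{2r}=0$ for $i\neq j$ immediately, and reduces the problem to inverting a single scalar cyclic tridiagonal matrix for each $i$.

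\emph{Step 2: Ansatz.} Fix $i$, set $x=-a_i/2$, and let $M$ denote the scalar cyclic operator (obtained from $H_{i,2r}$ by absorbing the $-1/L$) acting on $\mathbb{C}^{2r}$ (indices taken mod $2r$) by
\[
(Mv)_p = v_{p-1} - 2x\, v_p + v_{p+1}.
\]
Define $f(d) = U_{2r-d-1}(x) + U_{d-1}(x)$ on $\Z/2r\Z$ (with the convention $U_{-1}(x):=0$); note $f(d)=f(2r-d)$. The ansatz is
\[
(M^{-1})_{pq} = \frac{f(q-p)}{2\,[1-T_{2r}(x)]}.
\]
For $1\leq p\leq q\leq 2r$ this recovers the stated formula after restoring the scalar contributed by $-1/L$.

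\emph{Step 3: Checking the recurrence.} It suffices to verify
\[
f(d-1) - 2x\, f(d) + f(d+1) \;=\; 2\,[1-T_{2r}(x)]\,\delta_{d,0} \qquad (d\in \Z/2r\Z).
\]
For $1\leq d\leq 2r-1$, both indices $2r-d-1$ and $d-1$ lie in the range where the standard Chebyshev relation $U_{m+1}(x)-2x\,U_m(x)+U_{m-1}(x)=0$ applies (including the edge case $U_{-1}=0,\,U_0=1$), so the expression collapses to $0$. For $d=0$, the cyclic convention gives $f(-1)=f(2r-1)=1+U_{2r-2}(x)=f(1)$, whence the sum equals $2[1+U_{2r-2}(x)] - 2x\,U_{2r-1}(x)$; the identities $2x\,U_{2r-1} = U_{2r-2}+U_{2r}$ and $U_{2r}-U_{2r-2}=2T_{2r}$ then collapse this to exactly $2[1-T_{2r}(x)]$, as required.

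\emph{Main difficulty.} The argument is essentially algebraic once the ansatz is in hand; the only delicate points are (i) handling the boundary case $d=0$ (equivalently $d=2r$) by hand, using the identities that tie $T_{2r}$ to $U_{2r-2}$ and $U_{2r-1}$, and (ii) confirming that $T_{2r}(-a_i/2)\neq 1$ for every $i$, so that the denominator in the formula does not vanish. The latter is precisely the non-degeneracy hypothesis $\det(I-P_\gamma)\neq 0$ from condition (iv) of (\ref{DL}), via the determinant formula $\det(H_{2r})=L^{2r(1-n)}\prod_j(2-2\cos r\alpha_j)$ of Proposition~\ref{DetHESS}.
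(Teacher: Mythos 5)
Your argument is correct, and it is more self-contained than what the paper actually does. Your Step 1 is precisely the paper's proof of the decoupling: the observation that every block of $H_{2r}$ is diagonal, so that regrouping coordinates by the index $i$ block-diagonalizes $H_{2r}$ into the scalar cyclic tridiagonal matrices $H_{i,2r}$ (this is equation (\ref{BIGHESSIAN}) in the text), which gives $h^{ij,pq}_{2r}=0$ for $i\neq j$ and reduces everything to the case $n=2$. At that point the paper simply cites \cite{Z3} for the scalar formula, whereas you go on to prove it: your ansatz $(M^{-1})_{pq}=f(q-p)/\bigl(2[1-T_{2r}(x)]\bigr)$ with $f(d)=U_{2r-d-1}(x)+U_{d-1}(x)$, the interior verification via the three-term recurrence $U_{m+1}-2xU_m+U_{m-1}=0$, and the boundary case $d=0$ via $2xU_{2r-1}=U_{2r}+U_{2r-2}$ and $U_{2r}-U_{2r-2}=2T_{2r}$ are all correct (and the symmetry $f(d)=f(2r-d)$ confirms consistency with $h^{ii,pq}=h^{ii,qp}$). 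What your version buys is independence from the external reference; what it costs is nothing, since the computation is short. Your closing remark correctly identifies the invertibility issue: $T_{2r}(-a_i/2)=\cos r\alpha_i\neq 1$ is exactly the non-degeneracy of $\gamma^r$ guaranteed through Proposition \ref{DetHESS} and condition (iv) of (\ref{DL}).

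One small caveat, which is inherited from the statement rather than introduced by you: since $H_{i,2r}=\tfrac{-1}{L}M$, the genuine inverse is $H_{i,2r}^{-1}=-L\,M^{-1}$, so ``restoring the scalar contributed by $-1/L$'' would multiply your formula by $-L$, which is \emph{not} present in the displayed expression for $h^{ii,pq}_{2r}$. The proposition as printed (like (\ref{HPQ}) and (\ref{HPQ11}) downstream) suppresses this overall factor of $-L$; your derivation makes the normalization transparent, and you may as well say explicitly that the stated formula is the inverse of $-L\,H_{i,2r}$ rather than of $H_{i,2r}$ itself.
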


\begin{proof}
This formula was proved in \cite{Z3} for the case $n=2$. For the
general case $n\geq 2$, we just use the equation
(\ref{BIGHESSIAN}) and reduce it to $n=2$.
\end{proof}

We note that $h^{ij,pq} = h^{ij,qp}$ so this formula determines all of
the matrix elements.
It follows in the elliptic case that
\begin{equation}\label{HPQ} \begin{array}{l} h_{2r}^{ii,pq}
= \left\{ \begin{array}{ll}  \frac{(-1)^{p - q}}{2( 1 - \cos r \alpha_i)} \big( \frac{\sin (2r - q + p) \alpha_i/2}{\sin \alpha_i/2}  +  \frac{\sin ( q - p ) \alpha_i/2}{\sin \alpha_i/2}\big) & (1 \leq p \leq q \leq 2r) \\ & \\
\frac{(-1)^{p - q}}{2( 1 - \cos r \alpha_i)} \big( \frac{\sin (2r
- p + q) \alpha_i/2}{\sin \alpha_i/2}  +  \frac{\sin ( p - q )
\alpha_i/2}{\sin \alpha_i/2} \big) & (1 \leq q \leq p \leq 2r)
\end{array} \right. \end{array} \end{equation} There are obvious
analogues in the hyperbolic and mixed cases.

The case of interest to us is
\begin{equation}\label{HPQ11} h_{2r}^{ii,11}
= \frac{1}{2( 1 - \cos r \alpha_i)} \frac{\sin r
\alpha_i}{\sin \alpha_i/2} = \frac{1}{\sin \alpha_i/2} \cot
\frac{r \alpha_i}{2}.
\end{equation}

\subsection{{Properties of the phase function ${\mathcal
L}_{+}$ and the amplitude  $a^{pr,+}_{2r}$}}  Since ${\mathcal
L}_{+}$ and $a^{pr,+}_{2r}$ are functions of $2r(n-1)$ variables
$x_p^j$ where $1\leq p \leq 2r$ and $1\leq j \leq n-1$, to
simplify our notations we denote:
\\

Let $[\gamma]=(\gamma_p^{j})$, \, $1\leq p \leq 2r$, $1\leq j \leq
n-1$  \, be a $2r \times(n-1)$ matrix of indices. We let $m=
|[\gamma]|=\sum_{p,j} \gamma^j_p$. Then we define
$$D^m_{[\gamma]}= \frac{\pa^{m}}{(\pa x^1_1)^{\gamma^1_1}...(\pa
x_{2r}^{n-1})^{\gamma^{n-1}_{2r}}}.$$ We will use $\vec \gamma_p$
for the $p$-th row of $[\gamma]$, and sometimes if $\vec
\gamma_{p},\vec \gamma_{q},...$ are the only non-zero rows of
$[\gamma]$ we write $D^m_{\vec \gamma_{p}, \vec \gamma_{q},...}$
for $D^m_{[\gamma]}$ to emphasize that $\vec \gamma_{p},\vec
\gamma_{q},...$ are the only non-zero rows of $[\gamma]$. The
calculation of the highest derivative terms of the Balian-Bloch
wave invariants uses only the following properties of the phase
and principal amplitude which may be derived directly from the
formulae in Theorem \ref{SETUPA}.

The following Lemma is the higher dimensional generalization of
Lemma 4.5 of \cite{Z3}. It is proved in the same way, and the
proof is therefore omitted.

\begin{lem}\label{AMPPROPS}  The phase and principal amplitude of the
principal oscillatory integrals $I_{2r, \rho}^{0, \pm}$
have the following properties:

\begin{itemize}
\item[1.] In  its dependence on the boundary
defining functions $f_{\pm}$, the amplitude
$a^{pr,+}_{ 2r}$
 has the form $\alpha_r (k, x',  f_{\pm}, f_{\pm}')$.

\item[2.] As above, in its dependence on $x'$
 $$\qquad \; \;\; a^{pr,+}_{2r
 } (k, x_1', \dots, x_{2r}')   =  {\mathcal L}_{+}(x_1', \dots, x_{2r}') A^{pr,+}_{2r}(k, x_1', \dots, x_{2r}')
+ \frac{1}{i}
 \frac{\partial }{\partial k} A^{pr,+}_{2 r} (k,   x_1',  \dots, x_{2r}'),$$where $$A^{pr,+}_{2
r} (k, x_1', \dots, x_{2r}') \; = \; \Pi_{p = 1}^{2r} A_p (x_{p}',
x_{p + 1}'),\;\;\;\; 2 r + 1 \equiv 1.\;\;(\text{see} \; (\ref{AMPLSINGL}))$$

\item[3.] At the critical point, the principal amplitude has the
asymptotics
$$a^{pr,+}_{2r}(k, 0) \sim (2 r L) L^{-(n-1)r} \acal(r) + O(k^{-1}),$$
where $\acal(r)$
depends only on $r$ and not on $\Omega$.

\item[4.] $\frac{a^{pr,+}_{2r}(k, 0) e^{i (k + i \tau) {\mathcal
L_+}(0) + i \pi/4 sgn Hess {\mathcal L_+}(0)} }{\sqrt{\det Hess
{\mathcal L}_+}} \sim (2 r L)\;{\acal(r)}\;
{\mathcal D}_{D, \gamma^r}(k) (1 +  O(k^{-1})) \; (cf. \ref{PR})$.

\item[5.] $\nabla a^{pr,+}_{2r} (k, x_1', \dots, x_{2r}')|_{x' = 0} = 0$.

\item[6.] $D^{2j+1}_{\vec \gamma_p} {\mathcal L}_+ |_{x'=0} \equiv
2 w_+(p) D^{2j+1}_{\vec \gamma_p}f_{w_+(p)}(0)\;\;\; \mbox{mod}
\;\; R_{2r} ({\mathcal J}^{2j} f_+(0), {\mathcal
J}^{2j} f_-(0))$.

\item[7.] $D^{2j+2}_{\vec \gamma_p} {\mathcal L}_+ |_{x=0} \equiv 2
w_+(p) D^{2j+2}_{\vec \gamma_p}f_{w_+(p)}(0) \;\;\; \mbox{mod} \;\;
R_{2r} ({\mathcal J}^{2j} f_+(0), {\mathcal J}^{2j} f_-(0))$.

\item[8.] If $[\gamma]$ has more than one non-zero
row, say $\vec \gamma_p, \vec \gamma_q,...$, then
$$D^{2j+1}_{\vec \gamma_p, \vec \gamma_q, ...
}{\mathcal L}_+ (0) \equiv 0 \;\;\; \mbox{mod} \;\; R_{2r}
({\mathcal J}^{2j} f_+(0), {\mathcal J}^{2j} f_-(0)),$$
and
$$D^{2j+2}_{\vec \gamma_p, \vec \gamma_q, ...
}{\mathcal L}_+ (0) \equiv 0 \;\;\; \mbox{mod} \;\; R_{2r}
({\mathcal J}^{2j} f_+(0), {\mathcal J}^{2j} f_-(0)).$$
\end{itemize}

Above, $\equiv$ means equality modulo lower order
derivatives of $f$.

\end{lem}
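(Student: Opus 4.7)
The plan is to prove the eight properties by direct inspection of the explicit phase ${\mathcal L}_+$ and the principal amplitude $a^{pr,+}_{2r}$ given by Theorem \ref{SETUPA}. Each assertion is structural: dependence on the boundary data (1), factorization (2), values at the critical point (3)--(5), and Taylor behavior of the phase in the top-order derivatives (6)--(8). The route is to expand the relevant formulas in $(x_1',\ldots,x_{2r}')$ near the critical point $x'=0$ and read off the leading contributions. The only novelty relative to the two-dimensional Lemma 4.5 of \cite{Z3} is that each scalar position variable is replaced by a vector in $\R^{n-1}$, and derivatives acquire multi-index structure encoded by $\vec\gamma_p$.

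I would handle Properties (1) and (2) first: they are immediate from (\ref{AMPLSINGL}), since each link factor manifestly depends on $x'$ only through $f_{w_+(p)}(x_p')$ and $\nabla f_{w_+(p)}(x_p')$, and the decomposition of $a^{pr,+}_{2r}$ into a phase piece plus a $\partial_k$ piece is exactly what the $(t,\mu)$-stationary-phase elimination at the start of Theorem \ref{SETUPA} produces. For Properties (3) and (4), I would substitute $x'=0$ into (\ref{AMPLSINGL}): using $\nabla f_\pm(0)=0$ and the bouncing ball geometry $|y_p-y_{p+1}|=L$, each link collapses to $\pm a_1(kL)$, and combining the asymptotics of $a_1$ from (\ref{AMP}) with the determinant formula of Proposition \ref{DetHESS} identifies the universal factor ${\mathcal A}(r)$ and reconstructs the symplectic pre-factor ${\mathcal D}_{D,\gamma^r}(k)$ of Theorem \ref{BBL}. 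Property (5) I would deduce from the observation that each link factor $A_p$ is itself stationary at $x'=0$: since $\nabla f_\pm(0)=0$, both the Hankel amplitude $a_1(k|y_p-y_{p+1}|)$ and the quotient in (\ref{AMPLSINGL}) differ from their values at $x'=0$ only at order $O(|x'|^2)$; combined with the critical-point identity $\nabla{\mathcal L}_+(0)=0$ applied to the factored form of $a^{pr,+}_{2r}$ in (2), this forces $\nabla a^{pr,+}_{2r}(k,0)=0$.

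The heart of the lemma is Properties (6)--(8), which identify the top-order Taylor data carried by the phase. I would Taylor-expand the single link $\sqrt{(x_p'-x_{p+1}')^2+(f_{w_+(p)}(x_p')-f_{w_+(p+1)}(x_{p+1}'))^2}$ about $(x_p',x_{p+1}')=(0,0)$. Setting $x_{p+1}'=0$, using $f_{w_+(p)}(0)-f_{w_+(p+1)}(0)=w_+(p)L$, and factoring the sign of the vertical component out of the square root, the link equals $w_+(p)\bigl(f_{w_+(p)}(x_p')-f_{w_+(p+1)}(x_{p+1}')\bigr)$ plus a remainder whose $k$-weight is too small to hold a top-order derivative of $f_\pm$. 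Applying a pure row derivative $D^m_{\vec\gamma_p}$ with $m\in\{2j+1,2j+2\}$ then picks out $w_+(p)D^m_{\vec\gamma_p}f_{w_+(p)}(0)$ from each of the two adjacent links at vertex $p$, yielding the factor $2w_+(p)$ in (6) and (7). For Property (8), if $\vec\gamma_p$ and $\vec\gamma_q$ are both non-zero with $|p-q|\not\equiv 1\pmod{2r}$, no single link depends on both $x_p'$ and $x_q'$, so the mixed derivative vanishes identically; when $q=p\pm 1$, at least one of the derivatives must hit the coupling factor $(x_p'-x_{p+1}')^2$ in the square root, which costs at least one order in the total number of $f$-derivatives and pushes the result into $R_{2r}({\mathcal J}^{2j}f_+(0),{\mathcal J}^{2j}f_-(0))$.

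The main obstacle is the careful bookkeeping in Properties (6)--(8): one must verify that every subleading contribution produced by distributing derivatives across the Taylor expansion of the square-root \emph{and} across the product of $2r$ link factors really does either reproduce the announced top-order $f$-derivative at vertex $p$, or else fall into the remainder $R_{2r}({\mathcal J}^{2j}f_\pm(0))$. Because the links are row-factorized and the multi-index $\vec\gamma_p$ simply decorates the scalar index $\gamma_p$ of the two-dimensional case, no essentially new phenomenon appears; once the multi-index bookkeeping is carried through consistently, the argument proceeds exactly as in \cite{Z3}, which is why the authors omit the proof.
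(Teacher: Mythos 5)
Your proposal is correct and follows essentially the same route as the paper, which omits the proof entirely by deferring to Lemma 4.5 of \cite{Z3}: a direct Taylor expansion of the explicit link factors in (\ref{AMPLSINGL}) and of the square roots in ${\mathcal L}_+$ about $x'=0$, using $\nabla f_{\pm}(0)=0$ and $f_{w_+(p)}(0)-f_{w_+(p+1)}(0)=w_+(p)L$ to extract the factor $2w_+(p)D^m_{\vec\gamma_p}f_{w_+(p)}(0)$ from the two adjacent links and to push all cross terms into $R_{2r}({\mathcal J}^{2j}f_\pm(0))$. Your multi-index bookkeeping for (6)--(8) is sound and in fact supplies more detail than the paper does.
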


\subsection{\label{SPFD} Stationary phase diagrammatics}

We briefly review the stationary phase expansion from the
diagrammatic point of view. For more details we refer to
\cite{A,E,Z3}.

The stationary phase expansion gives an asymptotic expansion for
an oscillatory integral
$$Z(k) = \int_{\R^n} a(x) e^{ik S(x)} dx$$
where $a \in C_0^{\infty}(\R^n)$ and where $S$ has a unique
non-degenerate critical point in supp$(a)$  at $x=0$. Let us write
$H$ for the Hessian of $S$ at $0$. The stationary phase expansion
is:
$$\displaystyle Z(k) = (\frac{2\pi}{k})^{n/2} \frac{e^{i \pi sgn
(H)/4}}{\sqrt{|det H|}} e^{i k S(0)} Z_A(k),$$ where
$$Z_A(k)=\sum_{j = 0}^{\infty} k^{-j} \sum_{
  (\Gamma, \ell)  \in {\mathcal G}_{V, I},\;
  {I-V=j}}
\frac{I_{\ell} (\Gamma)}{S(\Gamma)},$$ where ${\mathcal G}_{V, I}$
consists of  labelled graphs $(\Gamma, \ell)$ with $V$ closed
vertices of valency $\geq 3$ (each corresponding to the phase),
with one open vertex (corresponding to the amplitude), and with
$I$ edges. The function $\ell$ `labels' each end of each edge of
$\Gamma$ with an index $j \in \{1, \dots, n\}.$

Above, $S(\Gamma)$ denotes  the order of the automorphism group of
$\Gamma$, and  $I_{\ell} (\Gamma)$ denotes the `Feynman amplitude'
associated to $(\Gamma, \ell)$. By definition, $I_{\ell}(\Gamma)$
is obtained by the following rule: To each edge with end labels
$j,k$ one assigns a factor of $i h^{jk}$ where $H^{-1} =
(h^{jk}).$ To each closed vertex one assigns a factor of $i
\frac{\partial^{\nu} S (0)}{\partial x^{i_1} \cdots \partial
x^{i_{\nu}}}$ where $\nu$ is the valency of the vertex and $i_1
\dots, i_{\nu}$ at the index labels of the edge ends incident on
the vertex. To the open vertex, one assigns  the factor
$\frac{\partial^{\nu} a(0)}{\partial x^{i_1} \dots \partial
x^{i_{\nu}}}$, where $\nu$ is its valence.   Then
$I_{\ell}(\Gamma)$ is the product of all these factors.  To the
empty graph one assigns the amplitude $1$.  In summing over
$(\Gamma, \ell)$ with a fixed graph $\Gamma$, one sums the product
of all the factors as the indices run over $\{1, \dots, n\}$.

We note that the power of $k$ in a given term with $V$ vertices
and $I$ edges equals $k^{-\chi_{\Gamma'}}$, where $\chi_{\Gamma'}
= V - I$ equals the Euler characteristic of the graph $\Gamma'$
defined to be $\Gamma$ minus the open vertex. We note that there
are only finitely many graphs for each $\chi$ because the valency
condition forces $I \geq 3/2 V.$ Thus, $V \leq 2 j, I \leq 3 j.$

\subsection{The stationary phase calculations of $I^{0,+}_{2r,\rho}$: The data $D^{2j+2}_{2\vec \gamma} f_{\pm}(0)$}
In this section we will repeatedly use different parts of Lemma
\ref{AMPPROPS} without quoting them.

We first claim that in the stationary phase expansion of $I_{2r,
\rho}^{0,+}$, the data $D^{2j+2}_{2\vec \gamma} f_{\pm}(0)$ appears first in the
$k^{-j}$ term . This is because any labelled graph $(\Gamma, \ell)
$ for which $I_{\ell}(\Gamma)$ contains the factor
$D^{2j+2}_{2\vec \gamma} f_{\pm}(0)$ must have a closed vertex of valency $\geq
2j+2$,  or the open vertex must have  valency $\geq 2j+1.$ The
minimal absolute Euler characteristic $|\chi(\Gamma ')|$ in the
first case is $j$. Since the Euler characteristic is calculated
after the open vertex is removed, the minimal absolute Euler
characteristic in the second case is $j+1$ (there must be at least
$j+1$ edges.) Hence the latter graphs do not have minimal absolute
Euler characteristic.  More precisely, we have:

\begin{prop} \label{Gamma1j+1} In the stationary phase expansion of $I_{2r, \rho}^{0, {+}}$, the
only labelled graph $(\Gamma, \ell)$ with $\chi(\Gamma')= V-I=- j$
and $I_{\ell} (\Gamma)$ containing  $D^{2j+2}_{2\vec \gamma} f_{\pm}(0)$ is given
by:
\begin{itemize}
\item ${\Gamma}_{1, j+1}\in \mathcal G_{1, j+1} $  (i.e. $ V = 1,
I = j+1)$. The graph $\Gamma_{1,j+1}$ has no open vertex, one
closed vertex and $j+1$ loops at the closed vertex. \item
 The only labels producing the desired data are those $\ell_p$, with $1\leq p \leq 2r$ fixed, which labels
all endpoints of all edges as $(i,p)$ where $1 \leq i \leq n-1$. (Notice the label $(i,p)$ corresponds to the variable $x_p^i$.)
\end{itemize}
In addition, the sum of the Feynman amplitudes corresponding to
the labelled graphs $(\Gamma_{1,j+1}, \ell_p)$ above, for a fixed
$p$, is
$$\sum_{\ell_p} I_{\ell_p}(\Gamma_{1,j+1})\equiv  (4 r L) L^{-(n-1)r} \acal(r)  i^{j+2}\sum_{|\vec \gamma_p|=j+1}\frac{(j+1)!}{\vec \gamma_p!} (\vec {h_+^{11,pp}})^{ \vec \gamma_p}\,
w_+(p)D^{2j+2}_{2\vec \gamma_p} f_{w_+(p)}(0) $$
where we neglect terms with  $\leq 2j + 1$ derivatives.

\end{prop}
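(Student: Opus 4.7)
The plan is to perform the stationary phase expansion of $I_{2r,\rho}^{0,+}$ using the Feynman-diagram organization recalled in \S\ref{SPFD}, and to isolate the labelled graphs that contribute the top-derivative data $D^{2j+2}_{2\vec\gamma_p}f_{w_+(p)}(0)$ at order $k^{-j}$. The ambient integration variables are $(x_1',\dots,x_{2r}') \in (\R^{n-1})^{2r}$, so each edge-end label is a pair $(i,p)$ with $1\leq i\leq n-1$ and $1\leq p\leq 2r$.

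First I would carry out a power-counting argument on the Euler characteristic $-\chi(\Gamma') = I - V$. To produce a factor $D^{2j+2}_{2\vec\gamma_p}f_{w_+(p)}(0)$ two mechanisms are available: either a closed vertex of valency $\geq 2j+2$ (via Lemma \ref{AMPPROPS}(7)), or the open vertex of valency $\geq 2j+1$ (since derivatives of the principal amplitude $A^{pr,+}_{2r}$ contain $f_\pm$ and $f'_\pm$ — one less derivative than they cost in valency). In the closed-vertex case, the minimum $I-V$ is realized uniquely when $V=1$ and all $2j+2$ ends live in $j+1$ loops at that single vertex, giving $I-V = j$; any additional valency at the closed vertex forces strictly more edges, and any further vertices raise $I-V$. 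In the open-vertex case, the valency count forces $I \geq \lceil (2j+1)/2\rceil = j+1$ without any closed vertex, so $I-V \geq j+1$, which is of strictly lower order than $k^{-j}$ and hence irrelevant. This identifies $\Gamma_{1,j+1}$ as the unique contributing graph.

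Second, I would determine which labelings produce the desired data. By Lemma \ref{AMPPROPS}(8), any mixed-row derivative of $\mathcal{L}_+$ reduces modulo lower jets, so all $2j+2$ endpoint labels must share one second coordinate $p$. With this choice fixed, Lemma \ref{AMPPROPS}(7) rewrites the vertex factor as
\begin{equation*}
D^{2j+2}_{\vec\beta_p}\mathcal{L}_+(0) \equiv 2\,w_+(p)\, D^{2j+2}_{\vec\beta_p} f_{w_+(p)}(0),\qquad |\vec\beta_p|=2j+2,
\end{equation*}
modulo $R_{2r}(\mathcal{J}^{2j+1}f_\pm(0))$, where $\vec\beta_p$ records the multiplicities of the first coordinates $i$ among the endpoints. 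The block-diagonal-in-$i$ structure of $\mathcal{H}_+$ (established in \S\ref{SPSECT}) combined with Proposition \ref{HESSPAR} forces each loop's two endpoints to share the same $i$ and makes the edge weight equal to $h_+^{ii,pp}$, with value $h_+^{ii,11}$ or $h_+^{ii,22}$ according to the parity of $p$. Consequently, top-derivative labelings are parameterized by assignments of a single coordinate $i \in \{1,\dots,n-1\}$ to each of the $j+1$ loops: if $\gamma_p^i$ loops carry coordinate $i$, then $\vec\beta_p = 2\vec\gamma_p$ with $|\vec\gamma_p| = j+1$, and the number of such labelings is the multinomial $(j+1)!/\vec\gamma_p!$.

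Third, I would multiply the Feynman factors prescribed in \S\ref{SPFD}: the open vertex contributes $a^{pr,+}_{2r}(k,0) \sim (2rL)L^{-(n-1)r}\acal(r)$ by Lemma \ref{AMPPROPS}(3); the closed vertex contributes $i\cdot 2 w_+(p)\, D^{2j+2}_{2\vec\gamma_p}f_{w_+(p)}(0)$; the $j+1$ loops contribute $\prod_i (i\, h_+^{ii,pp})^{\gamma_p^i} = i^{j+1}(\vec{h_+^{11,pp}})^{\vec\gamma_p}$. Summing over $\vec\gamma_p$ with $|\vec\gamma_p|=j+1$ and collecting the $i$-factors into $i^{j+2}$ and the numerical factor $2\cdot 2rL = 4rL$ yields the claimed formula. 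The main obstacle is verifying that each reduction "modulo lower jets" — the Lemma \ref{AMPPROPS}(7)--(8) reductions at the closed vertex, the elimination of mixed-row labelings, and the discarding of open-vertex-generated terms — truly drops the output into $R_{2r,j}(\mathcal{J}^{2j+1}f_\pm(0))$ as demanded by Theorem \ref{BGAMMAJ}. This requires a careful valency-by-valency bookkeeping on the Leibniz expansion of derivatives acting on the product amplitude $\prod A_p$, using the vanishing $\nabla a^{pr,+}_{2r}(k,0) = 0$ of Lemma \ref{AMPPROPS}(5) to suppress potential contributions at valency exactly $2j+1$. The secondary bookkeeping task is to reconcile the count of labelings above with the graph automorphism $S(\Gamma_{1,j+1}) = 2^{j+1}(j+1)!$ under the convention used by the proposition's sum $\sum_{\ell_p} I_{\ell_p}$.
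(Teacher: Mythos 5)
Your proposal is correct and follows essentially the same route as the paper: the identical Euler-characteristic power counting (closed vertex of valency $\geq 2j+2$ gives $|\chi|=j$ versus the open-vertex case giving $|\chi|\geq j+1$), the identification of $\Gamma_{1,j+1}$, the use of Lemma \ref{AMPPROPS} parts (3), (7), (8) and the block-diagonal structure of ${\mathcal H}_+$ to restrict the labelings, and the same multiplication of Feynman factors yielding $i^{j+2}$, the multinomial count, and the factor $2$ converting $2rL$ into $4rL$. Your closing remarks about the automorphism factor $S(\Gamma_{1,j+1})=2^{j+1}(j+1)!$ correctly place that bookkeeping in the subsequent proof of Theorem \ref{BGAMMAJ} rather than in this proposition.
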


\begin{proof}

We argued diagrammatically that the power $k^{-j}$ is the greatest power of $k$ in
which $D^{2j+2}_{2\vec \gamma} f_{\pm}(0)$ appears. We also showed that a labeled graph with Euler characteristic $-j$ which produces $D^{2j+2}_{2\vec \gamma} f_{\pm}(0)$ must have a closed vertex
of valency $\geq 2j+2$. Now it is clear that such graph must have only one closed vertex and $j+1$ loops. This proves the first part of the proposition.
The second part follows easily from Lemma \ref{AMPPROPS}.

Now let us determine $\sum_{\ell_p}I_{\ell_p}(\Gamma_{1,j+1})$ for
the labelled graphs $(\Gamma_{1,j+1}, \ell_p)$  above. We have
\Small\begin{equation}
\sum_{\ell_p}I_{\ell_p}(\Gamma_{1,j+1})\equiv (2 r L) L^{-(n-1)r}
\acal(r)\,  i^{j+2} \sum_{\gamma_p^1+
...+\gamma_p^{n-1}=j+1}\frac{(j+1)!}{\gamma_p^1 ! \dots
\gamma_p^{n-1} !} (h_+^{11,pp})^{\gamma_p^1} ...
(h_+^{(n-1)(n-1),pp})^{\gamma_p^{n-1}}D_{2 \vec {\gamma_p}}^{2j+2}
{\mathcal L}_+ (0).
\end{equation}
\normalsize
So by Lemma \ref{AMPPROPS} and using short-hand notations for multi-indices we get
$$\sum_{\ell_p}I_{\ell_p}(\Gamma_{1,j+1})\equiv  (4 r L) L^{-(n-1)r} \acal(r) i^{j+2}\sum_{|\vec \gamma_p|=j+1}\frac{(j+1)!}{\vec \gamma_p!} (\vec {h_+^{11,pp}})^{ \vec \gamma_p}\,
w_+(p)D^{2j+2}_{2\vec \gamma_p} f_{w_+(p)}(0)$$
\end{proof}
\subsection{Proof of Theorem \ref{BGAMMAJ}}
Now we are ready to prove Theorem \ref{BGAMMAJ}. The discussion above shows that modulus derivatives of order $\leq 2j+1$
$$B_{\gamma^r, j}= (2rL)^{-1} L^{(n-1)r} \sum_{p=1}^{2r} \frac{\sum_{\ell_p}I_{\ell_p}(\Gamma_{1,j+1})}{S(\Gamma_{1,j+1})}.$$
We notice that
$S(\Gamma_{1,j+1})=|Aut(\Gamma_{1,j+1})|=2^{j+1}(j+1)!$. We then
break up the sums over $p$ of even/odd parity and use Proposition
\ref{HESSPAR} to replace the odd parity Hessian elements by
$h_+^{11}$ and the even ones by $h_+^{22}$. Taking into account
that $w_{+}(p) = 1 (-1)$ if $p$ is even (odd), we conclude that (
by the formula in Proposition \ref{Gamma1j+1})
$$B_{\gamma^r, j} =\frac{B_{\gamma^r,0}}{(2i)^{j+1}} \sum_{|\gamma|=j+1}\frac{r}{\vec \gamma !}\big\{(\overrightarrow {h^{11}_{+,
2r}})^{\vec \gamma}D^{2j+2}_{2\vec \gamma} f_{+}(0)
 -(\overrightarrow {h^{11}_{-,
2r}})^{\vec \gamma}D^{2j+2}_{2\vec \gamma} f_{-}(0)\big\}.$$
So far we have proved all parts of Theorem \ref{BGAMMAJ} except the last part which finds a formula for the wave invariants in the case of symmetries.
\subsubsection{Balian-Bloch invariants at bouncing ball orbits of
$({\Z}/{{2\Z}})^n$  symmetric domains} Now if we assume the $({\Z}/{{2\Z}})^n$
symmetry assumptions, namely $f_+=f=-f_-$ and $f$ being even in
all variables, then using (\ref{HPQ11}) the formula above
simplifies to \begin{equation} \label{WISYM} \begin{array}{l}
B_{\gamma^r, j} = \frac{B_{\gamma^r,0}}{(2i)^{j+1}} \sum_{|\vec
\gamma|=j+1}\frac{r}{\vec \gamma !}\left( \frac{1}{\sin \frac{\vec
\alpha}{2}} \cot \frac{r
\vec \alpha}{2} \right)^{\vec \gamma}D^{2j+2}_{2\vec \gamma} f(0)\\ \\
  +
\{\text{polynomial of Taylor coefficients of order} \leq
2j \}.
\end{array}\end{equation}
This finishes the proof of Theorem \ref{BGAMMAJ}. Q.E.D.

\subsection{\label{SIMPLE} Recovering the Taylor Coefficients and the Proof of Theorem \ref{ONESYM}} First of all we prove the following lemma
\begin{lem}
If $\{\al_1,\dots \al_{n-1}\}$ are linearly independent over $\mathbb Q$ then the functions
$$ \left(\cot \frac{r
\vec \alpha}{2} \right)^{\vec \gamma} $$ are linearly independent over $\mathbb C$
as functions of $r\in \N$.
\end{lem}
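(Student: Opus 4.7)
The plan is to linearize the cotangent monomials via the identity $\cot(\theta) = i(e^{2i\theta}+1)/(e^{2i\theta}-1)$, reduce the hypothetical linear relation to a polynomial identity evaluated on an exponential orbit, apply linear independence of characters to force that polynomial to vanish, and then exploit algebraic independence of Möbius coordinates to recover vanishing of the original coefficients.

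Suppose, for contradiction, that there exist finitely many multi-indices $\vec\gamma$ and constants $c_{\vec\gamma} \in \C$, not all zero, with $\sum_{\vec\gamma} c_{\vec\gamma} \prod_{j=1}^{n-1} \cot^{\gamma_j}(r\alpha_j/2) = 0$ for every $r \in \N$. Writing $w_j = e^{ir\alpha_j}$ and using $\cot(r\alpha_j/2) = i(w_j+1)/(w_j-1)$, the relation becomes $\sum_{\vec\gamma} c_{\vec\gamma}\, i^{|\vec\gamma|} \prod_j ((w_j+1)/(w_j-1))^{\gamma_j} = 0$. Multiplying through by $\prod_j (w_j-1)^N$ with $N := \max_{\vec\gamma, j}\gamma_j$ produces a polynomial $P \in \C[w_1,\dots,w_{n-1}]$ satisfying $P(e^{ir\alpha_1},\dots,e^{ir\alpha_{n-1}}) = 0$ for all $r \in \N$.

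Expanding $P(\vec w) = \sum_{\vec m} p_{\vec m} \vec w^{\vec m}$ in monomials, the identity reads $\sum_{\vec m} p_{\vec m} e^{ir\vec m \cdot \vec\alpha} = 0$ for all $r \in \N$. The $\Q$-linear independence of $\{\alpha_1,\dots,\alpha_{n-1}\}$, understood jointly with $2\pi$ as is customary in the irrational-rotation setting, implies that the real numbers $\vec m\cdot\vec\alpha$ are pairwise distinct modulo $2\pi\Z$ over the finite support of $P$. The classical Vandermonde-style argument for exponential sums (characters of $\Z$ indexed by distinct frequencies are $\C$-linearly independent) then forces $p_{\vec m} = 0$ for every $\vec m$, so $P$ is identically zero as a polynomial.

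To convert this back into vanishing of the $c_{\vec\gamma}$, observe that the coordinatewise Möbius map $w_j \mapsto u_j := (w_j+1)/(w_j-1)$ is birational, whence $\C(u_1,\dots,u_{n-1}) = \C(w_1,\dots,w_{n-1})$ and $u_1,\dots,u_{n-1}$ are algebraically independent over $\C$. The distinct monomials $\vec u^{\vec\gamma}$ are therefore $\C$-linearly independent as rational functions of $\vec w$, so $P \equiv 0$ forces every $c_{\vec\gamma}$ to vanish, contradicting the assumed nontriviality. The delicate step is the character-independence claim: one must exclude accidental coincidences $\vec m\cdot\vec\alpha \equiv \vec m'\cdot\vec\alpha \pmod{2\pi}$ for distinct $\vec m, \vec m'$ supported by $P$, which is precisely what the joint $\Q$-independence of $\{\alpha_1,\dots,\alpha_{n-1},2\pi\}$ supplies (equivalently, Weyl equidistribution of the orbit $r \mapsto (r\alpha_1,\dots,r\alpha_{n-1})$ on the torus); the remaining algebraic steps are purely formal once this is in hand.
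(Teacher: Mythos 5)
Your proof is correct, but it follows a genuinely different route from the paper's. The paper regards the hypothetical relation as the vanishing of the meromorphic, multiply periodic function $\psi(\vec z)=\sum_{\vec\gamma}c_{\vec\gamma}(\cot\vec z)^{\vec\gamma}$ along the orbit $\{r\vec\alpha/2\}$, invokes Kronecker--Weyl density of that orbit in the real torus together with the identity theorem to conclude $\psi\equiv 0$, and then uses algebraic independence of $\cot z_1,\dots,\cot z_{n-1}$ to kill the coefficients. You instead substitute $w_j=e^{ir\alpha_j}$, clear denominators to get a polynomial identity along the exponential orbit, and replace the density-plus-analytic-continuation step by linear independence of distinct characters of $\Z$ (a finite Vandermonde computation); your final step, algebraic independence of the Möbius coordinates $u_j=(w_j+1)/(w_j-1)$ via birationality, plays the role of the paper's algebraic independence of the cotangents. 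What your version buys is a more elementary, purely algebraic argument that makes the arithmetic input explicit: both proofs actually require that $\{\alpha_1,\dots,\alpha_{n-1},2\pi\}$ (not merely $\{\alpha_1,\dots,\alpha_{n-1}\}$) be $\Q$-linearly independent --- you state this openly, while the paper uses it implicitly in the claim that the points $r\vec\alpha/2$ are dense in the torus. The paper's version is shorter and more geometric but leans on the identity theorem for a function known to vanish only on a dense subset of the real locus. Both are sound modulo that shared strengthening of the hypothesis, which is the standard reading of the non-resonance condition (iv) of (\ref{DL}).
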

\begin{proof}
Suppose that there exist coefficients $c_{\vec \gamma}$ such that
$$\sum_{\vec \gamma}\; c_{\vec \gamma}\left(\cot \frac{r
\vec \alpha}{2} \right)^{\vec \gamma} = 0, \;\; \forall r \in \mathbb N.$$
Consider the function
$$ \psi(z_1, \dots, z_{n-1}) : = \sum_{\vec \gamma}\; c_{\vec \gamma} \left(\cot \vec z
 \,\right)^{\vec \gamma}.$$
This function is meromorphic and periodic of period $2 \pi$ in
each variable $z_j$, so it may be regarded as a meromorphic
function on $(\C/ \Z)^{n-1}$. It  vanishes when $z_j = r
\alpha_j/2 $ modulo $2 \pi$ for all $r = 1, 2, 3, \dots$. But such
points are dense in the real submanifold $(\R/\Z)^{n-1}$ and hence
the function vanishes identically on $(\C/ \Z)^{n-1}$. This is a
contradiction since the functions $\prod_{j = 1}^{n-1}
w_j^{\gamma_j}$ are independent functions and by the change of
variables $w_j = \cot z_j$ the functions $\prod_{j = 1}^{n-1}
\left(
 \cot z_j
\right)^{\gamma_j}$ must also be independent.
\end{proof}

Now assume $\Omega \subset \mathbb R^n$ is a domain in the class
$\mathcal D_{L}$ defined in (\ref{DL}). Take a non-degenerate
bounding ball orbit $\gamma$ of length $2L$ which satisfies all
the properties listed in (\ref{DL}). We would like to use
mathematical induction and recover the Taylor coefficients of the
function $f$ where $f$ and $-f$ are the local defining functions
of $\partial \Omega$ near the top and bottom of the bouncing ball
orbit respectively. First, it is possible to recover all the
$\al_j$, $1\leq j\leq n-1$, under a permutation \cite{Fr}. This is
because $|\det(I-P_{\gamma^r})|$ is a spectral invariant (the
$0$-th wave invariant). But we know that
$|\det(I-P_{\gamma^r})|=\prod_{j=1}^{n-1} (2-2\cos(r\al_j))$.
Hence $\prod_{j=1}^{n-1} (\sin^2(r\al_j/2))$ is a spectral
invariant for all $r\in \N$. It is easy to see that this condition
determines $\al_j$ under a permutation. We fix this permutation
and we argue inductively to recover all the Taylor coefficients.
Since $f$ is even in all the variables, the odd order Taylor
coefficients are zero. Now assume $D^{2 |\vec \gamma|}_{2\vec
\gamma} f(0)$ are given for all $|\vec \gamma|\leq j$. Hence the
remainder polynomial term of $(\ref{WISYM})$ is given. Now by the
above lemma, since all the functions $\left(\cot \frac{r \vec
\alpha}{2} \right)^{\vec \gamma}$ are linearly independent, we can
recover the Taylor coefficients $D^{2j+2}_{2\vec \gamma} f(0)$.
This concludes the proof of Theorem \ref{ONESYM}.

The analogous arguments will follow in the hyperbolic or mixed
hyperbolic-elliptic cases.



\begin{thebibliography}{hhhh}

\bibitem[Al]{Al} I. Alexandrova,  Semi-classical wavefront set and Fourier
integral operators. Canad. J. Math. 60 (2008), no. 2, 241-263.

\bibitem[AG]{AG} D. Alonso and P.   Gaspard,  $\hbar$ expansion for the periodic orbit quantization of chaotic systems. Chaos 3 (1993), no. 4, 601--612.

\bibitem[AM]{AM} K. G. Andersson and R. B.  Melrose,  The propagation of
singularities along gliding rays. Invent. Math. 41 (1977), no. 3,
97--232.


\bibitem[A]{A} S. Axelrod,  Overview and warmup example for perturbation theory with instantons. Geometry and physics (Aarhus, 1995), 321--338, Lecture Notes in Pure and Appl. Math., 184, Dekker, New York, 1997.



\bibitem[BBa]{BBa} V.M.Babic and V.S. Buldyrev: {\it Short-Wavelength Diffraction Theory},
Springer Series on Wave Phenomena 4, Springer-Verlag, New York
(1991).



\bibitem[BB1]{BB1} R. Balian and C. Bloch, Distribution of eigenfrequencies for
the wave equation in a finite domain I: three-dimensional problem
with smooth boundary surface, Ann. Phys. 60 (1970), 401-447.

\bibitem[BB2]{BB2} R. Balian and C. Bloch,  Distribution of eigenfrequencies for the wave equation in a finite domain. III. Eigenfrequency density oscillations. Ann. Physics 69
(1972), 76--160.


\bibitem[CP]{CP} F. Cardoso and G.  Popov,
Quasimodes with exponentially small errors associated with
elliptic periodic rays. Asymptot. Anal. 30 (2002), no. 3-4,
217--247.


\bibitem[Ch]{Ch} J. Chazarain,  Construction de la paramétrix du problème mixte hyperbolique pour l'équation des ondes.  C. R. Acad. Sci. Paris Sér. A-B 276 (1973), A1213--A1215.

\bibitem[Chr]{Chr} T. Christiansen, Sojourn times, manifolds with infinite cylindrical ends, and an inverse problem for planar waveguides.
J. Anal. Math. 107 (2009), 79--106.

\bibitem [CV]{CV} Y.Colin de Verdi\`ere, Sur les longuers des trajectoires
periodiques d'un billard, In: P.Dazord and N. Desolneux-Moulis
(eds.) {\it Geometrie Symplectique et de Contact: Autour du
Theoreme de Poincare-Birkhoff. Travaux en Cours, Sem.
Sud-Rhodanien de Geometrie III} Pairs: Herman (1984), 122-139.

\bibitem[CVG]{CVG} Y. Colin De Verdi\`ere and  V. Guillemin,
A semi-classical inverse problem I: Taylor expansions (arXiv:0802.1605).





 \bibitem[E]{E} P. Etingof, Lecture Notes on Quantum Field Theory
(MIT OpenCourseWare).


\bibitem[EZ]{EZ} L.C. Evans and M. Zworski, {\it  Lectures on semiclassical
analysis,}
Lecture notes, available at
http://math.berkeley.edu/~zworski/semiclassical.pdf.

\bibitem[Fr]{Fr} D. Fried,
Cyclic resultants of reciprocal polynomials. Holomorphic dynamics
(Mexico, 1986), 124--128, Lecture Notes in Math., 1345, Springer,
Berlin, 1988.

\bibitem[Gh]{Gh} M. Ghomi,
Shortest periodic billiard trajectories in convex bodies.  Geom.
Funct. Anal.  14  (2004),  no. 2, 295--302.

\bibitem[GS]{GS} C. S. Gordon and Z. I.  Szabo,
Isospectral deformations of negatively curved Riemannian manifolds
with boundary which are not locally isometric. Duke Math. J. 113
(2002), no. 2, 355--383.

\bibitem[GW]{GW} C. S. Gordon and  D. L.  Webb,
Isospectral convex domains in Euclidean space. Math. Res. Lett. 1
(1994), no. 5, 539--545

\bibitem[GWW]{GWW} C. Gordon, D.  Webb, and S. Wolpert,  Isospectral plane
domains and surfaces via Riemannian orbifolds. Invent. Math. 110
(1992), no. 1, 1--22

\bibitem[G]{G} V. Guillemin, Wave trace invariants, Duke Math. J. 83 (1996),
287-352.

\bibitem[G2]{G2} V. Guillemin, Wave trace invariants and a theorem of
Zelditch, Int. Math. Res. Not. 12 (1993), 303-308.


\bibitem[GU]{GU} V. Guillemin and A. Uribe,  Some inverse spectral
results for semi-classical Schrödinger operators. Math. Res. Lett.
14 (2007), no. 4, 623--632.


\bibitem[GM]{GM} V.Guillemin and R.B.Melrose, The Poisson summation formula for
manifolds with boundary, Adv.in Math. 32 (1979), 204 - 232.

\bibitem[GuS]{GuS} V. Guillemin and S.  Sternberg, {\it Geometric
asymptotics.}
Mathematical Surveys, No. 14. American Mathematical Society,
Providence, R.I., 1977.

\bibitem[HZ]{HZ} A. Hassell and S. Zelditch, Quantum ergodicity of
boundary values of eigenfunctions, Comm. Math. Phys. 248 (2004),
no. 1, 119--168 (preprint arXiv: math.SP/0211140).

\bibitem[H]{H} H. Hezari, Inverse spectral problems for
Schr\"odinger operators, Comm. Math. Phys. 288 (2009), no. 3,
1061--1088 (arXiv:0801.3283).

\bibitem[HeZ]{HeZ} H. Hezari and S. Zelditch, Analytic spectral rigidity of plane domains with one symmetry (in preparation).

\bibitem[H\"o]{Ho}  L. H\"ormander, {\em The Analysis of Linear Partial
Differential Operators, Volumes I--IV\/}, Springer-Verlag Berlin
Heidelberg, 1983.


\bibitem[ISZ]{ISZ}
 A. Iantchenko, J. Sj\"ostrand and M. Zworski, Birkhoff
normal forms in semi-classical inverse problems, Math. Res. Lett.
9 (2002), no. 2-3, 337--362 (arXiv.org/abs/math.SP/0201191).

\bibitem[KAC]{KAC} M. Kac,  Can one hear the shape of a drum? Amer. Math. Monthly 73 1966 no. 4, part II, 1--23.


\bibitem[KT]{KT} V.V. Kozlov and D. V. Treshchev, {\it Billiards: A Genetic Introduction to the Dynamics
of Systems with Impacts}, Translations of Math. Monographs 89, AMS
publications, Providence, R.I. (1991).

\bibitem[L]{L} V.F.Lazutkin, Construction of an asymptotic series of
eigenfunctions of the ``bouncing ball'' type,
Proc.SteklovInst.Math. 95 (1968), 125- 140.

\bibitem[LT]{LT} V.F. Lazutkin and D.Ya.Terman, Number of quasimodes of `bouncing
ball' type, J.Soviet Math. (1984), 373-379.




\bibitem[PS]{PS} V.M.Petkov and L.N.Stoyanov, {\it Geometry of Reflecting Rays
and Inverse Spectral Problems}, John Wiley and Sons, N.Y. (1992).





\bibitem[SZ]{SZ} J. Sj\"ostrand and M. Zworski, Quantum monodromy and semi-classical trace formulae,
J. Math. Pures Appl. (9) 81 (2002), no. 1, 1--33.

\bibitem[T]{T} M.E. Taylor, {\it Partial Differential Equations , I- II}, Appl. Math.Sci. 115 -116, Springer-Verlag (1996).


\bibitem[U]{U} H. Urakawa,
Bounded domains which are isospectral but not congruent. Ann. Sci.
\'Ecole Norm. Sup. (4) 15 (1982), no. 3, 441-456.


\bibitem[Z1]{Z1} S. Zelditch,
 Wave invariants at elliptic closed geodesics.
Geom. Funct. Anal. 7 (1997), no. 1, 145-213.

\bibitem[Z2]{Z2} S. Zelditch,  Spectral determination of analytic bi-axisymmetric plane domains, Geom. Funct. Anal. 10 (2000), no. 3, 628-677.

\bibitem[Z3]{Z3} S. Zelditch,  Inverse spectral problem for analytic domains II: domains with one symmetry,
 Ann. of Math. (2) 170 (2009), no. 1, 205--269 (
 arXiv:math/0111078).

 \bibitem[Z4]{Z4} S. Zelditch,
  Inverse spectral problem for analytic domains. I. Balian-Bloch trace formula.  Comm. Math. Phys.  248  (2004),  no. 2, 357-407.

\bibitem[Z5]{Z5} S. Zelditch,  Inverse resonance problem for $\Bbb Z\sb
2$-symmetric analytic obstacles in the plane. Geometric methods in
inverse problems and PDE control, 289-321, IMA Vol. Math. Appl.,
137, Springer, New York, 2004.



\end{thebibliography}
\end{document}